
\documentclass[12pt]{amsart}
\pdfoutput = 1 

\usepackage{amssymb}
\usepackage{amsthm}
\usepackage{amsmath}
\usepackage{mathtools}
\usepackage{amsfonts}
\usepackage{mdframed}
\allowdisplaybreaks
\pagestyle{plain}
\usepackage{subcaption} 
\usepackage[normalem]{ulem}

\usepackage[skins, breakable]{tcolorbox}

\usepackage[foot]{amsaddr} 

\usepackage[left=1in,right=1in,top=1in,bottom=1in,bindingoffset=0cm]{geometry} 

\usepackage{hyperref}
\hypersetup{
    colorlinks=true,
    linkcolor=blue,
    filecolor=magenta,
    urlcolor=red,
    citecolor=cyan,
}
\usepackage{tikz}
\usetikzlibrary{calc}
\usepackage{enumitem}
\usepackage{standalone} 

\usepackage[backend=biber, firstinits=true, style=numeric, sorting=nyt, maxnames=100,backref=true, sortcites = true]{biblatex}
\addbibresource{references.bib}

\DeclareSourcemap{
  \maps[datatype=bibtex, overwrite]{
    \map{
      \step[fieldset=DOI, null]
      \step[fieldset=URL, null]
      \step[fieldset=ISSN, null]
      \step[fieldset=primaryclass, null]
    }
  }
}


\renewbibmacro{in:}{}
\setlength\bibitemsep{.5\baselineskip}

\renewbibmacro*{volume+number+eid}{%
	\printfield{volume}%
	\setunit*{\addnbspace}
	\printfield{number}%
	\setunit{\addcomma\space}%
	\printfield{eid}}

\DeclareFieldFormat[article]{volume}{\textbf{#1}\space}
\DeclareFieldFormat[article]{number}{\mkbibparens{#1}}

\DeclareFieldFormat{journaltitle}{#1,}
\DeclareFieldFormat[thesis]{title}{\mkbibemph{#1}\addperiod}
\DeclareFieldFormat[article, unpublished, thesis]{title}{\mkbibemph{#1},}
\DeclareFieldFormat[book]{title}{\mkbibemph{#1}\addperiod}
\DeclareFieldFormat[unpublished]{howpublished}{#1, }

\DeclareFieldFormat{pages}{#1}

\DeclareFieldFormat[article]{series}{Ser.~#1\addcomma}

\setlength{\footskip}{1.5\baselineskip}


\newtheorem{theorem}{Theorem}[section] 
\newtheorem{lemma}[theorem]{Lemma}
\newtheorem{corollary}[theorem]{Corollary}
 
\newtheorem{question}[theorem]{Question}

\newtheorem*{theorem*}{Theorem}

\theoremstyle{definition} 
\newtheorem{definition}[theorem]{Definition}

\theoremstyle{remark} 

\newtheorem{remark}[theorem]{Remark}

\newcommand*{\myproofname}{Proof}

\newcommand{\N}{{\mathbb{N}}}

\renewcommand{\epsilon}{\varepsilon}

\renewcommand{\phi}{\varphi}
\renewcommand{\leq}{\leqslant}
\renewcommand{\geq}{\geqslant}
\newcommand{\defeq}{\coloneqq}
\newcommand{\im}{\mathrm{im}}
\newcommand{\dom}{\mathrm{dom}}

\newcommand{\emphdef}[1]{\textit{{#1}}}
\renewcommand{\mod}{\,\,\text{mod}\,\,}
\renewcommand{\int}{\text{int}}


\newcommand{\notsim}{\not\sim}
\NewCommandCopy{\Hungarian}{\H}
\renewcommand{\H}{\mathcal{H}}

\newcommand{\defect}{\text{def}}


\usepackage{titlesec}
\titleformat{\section}[block]{\large\scshape\center}{\thesection.}{1ex}{}
\titleformat{\subsection}[block]{\bfseries}{\thesubsection.}{1ex}{}
\titleformat{\subsubsection}[runin]{\itshape}{\bfseries\upshape\thesubsubsection.}{1ex}{}[.---]

\titlespacing*{\section}{0pt}{*3}{*1}
\titlespacing*{\subsection}{0pt}{*3}{*1}
\titlespacing*{\subsubsection}{0pt}{*1.5}{*0}

\usepackage{caption}



\begin{document}
\emergencystretch 1em 

\title[Defective Correspondence Coloring of planar graphs]{Defective Correspondence Coloring of planar graphs}

\author{James Anderson}
\address{School of Mathematics, Georgia Institute of Technology, Atlanta GA, 30318}
\email{james.anderson@math.gatech.edu}


\begin{abstract}
Defective coloring (also  known as relaxed or improper coloring) is a generalization of proper coloring defined as follows: for $d \in \mathbb{N}$, a coloring of a graph is \textit{$d$-defective} if every vertex is colored the same as at most $d$ of its neighbors. We investigate defective coloring of planar graphs in the context of \textit{correspondence coloring}, a generalization of list coloring introduced by Dvo{\v{r}}{\'a}k and Postle. First we show there exists a planar graph that is not $3$-defective $3$-correspondable, strengthening a recent result of Cho, Choi, Kim, Park, Shan, and Zhu. Then we construct a planar graph that is $1$-defective $3$-correspondable but not $4$-correspondable, thereby extending a recent result of Ma, Xu, and Zhu from list coloring to correspondence coloring. Finally we show all outerplanar graphs are $3$-defective $2$-correspondence colorable, with 3 defects being best possible.
\end{abstract}

\maketitle

\section{Introduction}
All graphs considered in this paper are finite, undirected, and simple. For a natural number $n$, we let $[n]$ denote $\{1, 2, \ldots, n\}$. For a function $f$, we let $\dom(f)$ denote the domain of $f$ and $\im(f)$ denote the image of $f$.

\textit{Defective} coloring (also known as \textit{relaxed} or \textit{improper} coloring) of a graph $G$ is defined as follows: for $d \in \N$, a vertex coloring of a graph $G$ is \textit{$d$-defective} if every vertex is colored the same as at most $d$ of its neighbors, i.e., each color class of vertices induces a subgraph of $G$ of degree at most $d$.
Formally, given a graph $G$ and a coloring $\phi : V(G) \to \N$, the \emphdef{defect} of a vertex $v \in V(G)$ under $\phi$, denoted by $\defect_\phi(v)$ or simply $\defect(v)$ when $\phi$ is understood, is the number of neighbors $u \in N(v)$ that receive the same color as $v$, i.e., satisfy $\phi(u) = \phi(v)$. For $d, k \in \N$, a $k$-coloring $\phi$ of $G$ is \textit{$d$-defective ($d$-def)} if $\defect(v) \leq d$ for all $v \in V(G)$. 
A proper coloring is a $0$-def coloring, and so defective coloring can be viewed as a natural generalization of proper coloring.

In this paper, we investigate defective coloring of planar graphs in the context of \textit{correspondence coloring}, a generalization of list coloring introduced by Dvo{\v{r}}{\'a}k and Postle
\cite{dvovrak2018correspondence} (we refer the reader unfamiliar with list and correspondence coloring to \S\ref{sec: correspondence}, where we introduce the necessary definitions and terminology). Defective coloring can naturally be extended to correspondence coloring (and thus list coloring) as follows:

\begin{definition} Let $G$ be a graph and $\H = (L, H)$ a correspondence cover for $G$. Then an $\H$-coloring $\phi$ of $G$ is \emphdef{$d$-defective} if the image of $\phi$ induces a subgraph of degree at most $d$ in $H$ (i.e., the color $\phi(v)$ for $v \in V(G)$ conflicts with $\phi(u)$ for at most $d$ neighbors $u \in N(v)$). A graph $G$ is \emphdef{$d$-def $k$-correspondable} if there exists a $d$-def $\H$-coloring of $G$ for every $k$-fold correspondence cover $\H$ for $G$. 
\end{definition}
Since list coloring is a special case of correspondence coloring, the definition naturally extends to the list coloring case: when there exists a $d$-def $L$-coloring of $G$ for every $k$-fold list assignment $L$ for $G$, we say $G$ is \textit{$d$-def $k$-choosable}. We note that when a graph is $0$-def $k$-choosable or $0$-def $k$-correspondable we will simply say it is \textit{$k$-choosable} or \textit{$k$-correspondable}, respectively.

\subsection{History of Defective Coloring}\label{subsection: history of defective coloring}
Although the idea of defective vertex coloring has appeared in a handful of earlier works under different names (e.g., decomposition theorems such as that of Lov\'asz \cite{lovasz1966decomposition}), Cowen--Cowen-- Woodall formally introduced defective coloring in \cite{cowen1986defective}, where they focused on planar graphs. They proved every planar graph is $2$-def $3$-colorable and that this cannot be improved to $1$-def $3$-colorable. In addition, the authors demonstrate that there does not exist $d \in \N$ such that all planar graphs are $d$-def $2$-colorable. As the 4-color Theorem \cite{robertson1997four} is equivalent to the statement that all planar graphs are $0$-def $4$-colorable, these results completely characterize the pairs $(d,k)$ for which all planar graphs are $d$-def $k$-colorable.

Since then, defective colorings have received much attention, with hundreds of papers written on the subject and its variations (see the dynamic survey of Wood \cite{wood2018defective}).
One such generalization of defective coloring is defective \textit{list coloring}, introduced independently by \v{S}krekovski \cite{vskrekovski1999list} and Eaton--Hull \cite{eaton1999defective}.
In their works,
\v{S}krekovski \cite{vskrekovski1999list} and Eaton--Hull \cite{eaton1999defective} used Thomassen-style arguments to generalize the result of Cowen--Cowen--Woodall, proving that all planar graphs are $2$-def $3$-choosable. As there are planar graphs that are \textit{not} $4$-choosable \cite{voigt1993list}, the authors then asked if all planar graphs are $1$-def $4$-choosable. This question went unresolved for nearly 15 years until Cushing--Kierstead  \cite{cushing2010planar} answered it in the affirmative, again using Thomassen-style techniques.
As Thomassen proved every planar graph is $5$-choosable \cite{thomassen1994every}, the result of Cushing--Kierstead completely resolved the question of when a planar graph is $d$-def $k$-choosable for all pairs $(d,k)$. 

Another line of inquiry is the relationship \textit{between} proper and defective (list) coloring.
For instance, if a planar graph is $4$-choosable, then is it $1$-def $3$-choosable? The answer to this question is negative---indeed, Cowen--Cowen--Woodall \cite{cowen1986defective} constructed a planar graph that is $4$-choosable (it is 3-degenerate) but that is not $1$-def $3$-colorable (and thus is not $1$-def $3$-choosable).

The converse was asked by Wang and Xu in \cite[Problem 1]{wang2013improper}: is every $1$-def $3$-choosable planar graph $4$-choosable? A recent paper by Ma--Xu--Zhu \cite{ma2023two} answers this question in the negative, constructing a planar graph that is $1$-def $3$-choosable, but not $4$-choosable. This completely resolved the question of the relationship between proper and defective list coloring for planar graphs.

Thus the natural next step is to consider these results in the context of defective correspondence coloring.

\subsection{History of Defective Correspondence Coloring}
Defective correspondence coloring was introduced recently in a series of papers by Kostoshka--Xu \cite{kostochka20212}, Jing--Kostochka--Ma--Xu \cite{jing2022defective}, and Nakprasit--Sittitrai \cite{sittitrai2019sufficient}. The first two papers focused on coloring sparse graphs (not necessarily planar) using 2 colors. Among other results, they determine the minimum number of edges of graphs that are $2$-def $2$-correspondence critical (i.e., graphs that are not $2$-def $2$-correspondable but with the property that every proper subgraph is). They also found sufficient conditions on the maximum average degree of a graph that guarantees it is $2$-def $2$-correspondable, improving known results for list coloring.

In \cite{sittitrai2019sufficient}, Nakprasit--Sittitrai prove every planar graph without cycles of length 4 and 6 is $1$-def $3$-correspondable, generalizing the list-version of the result given by Lih--Song--Wang--Zhang \cite{lih2001note}.
Since these papers, several other articles on defective correspondence coloring have appeared in the literature, including \cite{cai2021note, sittitrai2019sufficient, sribunhung2023relaxed, xiao2024weak, fang2022relaxed, sittitrai2023weak, kostochka2024sparse, jing2021defective}. These results have either focused on coloring graphs (not necessarily planar) restricted to 2 colors, or have focused on planar graphs without cycles of various lengths.

A natural question is therefore the following:

\begin{question}
    For which pairs $(d,k) \in \N\times\N$ is every planar graph $d$-def $k$-correspondable? 
\end{question}

As the proof of Thomassen's result that all planar graphs are $5$-choosable \cite{thomassen1994every} can easily be adapted to the correspondence setting \cite{dvovrak2018correspondence}, it follows all planar graphs are $0$-def $5$-correspondable. As mentioned in \S\ref{subsection: history of defective coloring}, there is no $d$ such that all planar graphs are $d$-def $2$-colorable, and so we turn our attention to $k \in \{3, 4\}$.

It follows from recent results of Cho--Choi--Kim--Park--Shan--Zhu \cite{cho2022decomposing} on edge decompositions that every planar graph is $2$-def $4$-correspondable. A graph $G$ is \textit{decomposable} into graphs $H_1$ and $H_2$ if $H_1$ and $H_2$ are spanning subgraphs of $G$ that partition $E(G)$. The authors of \cite{cho2022decomposing} prove that every planar graph $G$ is decomposable into $H_1$ and $H_2$ such that $H_1$ is 3-degenerate and $H_2$ has maximum degree 2 \cite[Theorem 1.2]{cho2022decomposing}.
As $d$-degenerate graphs are $(d+1)$-correspondable (\cite{bernshteyn2023weak}, it follows $H_1$ is $4$-correspondable; furthermore, as $H_2$ has maximum degree 2, it follows any $\mathcal{H}$-coloring of $G$ for $4$-fold correspondence cover $\mathcal{H}$ will be at most $2$-defective. Thus $G$ is $2$-def $4$-correspondable. 

Similarly, the authors of \cite{cho2022decomposing} show every planar graph can be decomposed into $H_1$ and $H_2$ such that $H_1$ is 2-degenerate and $H_2$ has maximum degree 6, from which it follows every planar graph is $6$-def $3$-correspondable \cite[Theorem 1.6]{cho2022decomposing}. On the other hand, they construct a planar graph that is \textit{not} decomposable into $H_1$ and $H_2$ with $H_1$ being 2-degenerate and $H_2$ having maximum degree 3 \cite[Proposition 1.4]{cho2022decomposing}.

\subsection{Main Results}
Our first result is the following:

\begin{theorem}\label{Theorem: not 3 def 3 corr}
    There exists a planar graph that is not $3$-def $3$-correspondable.
\end{theorem}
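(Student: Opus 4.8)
The plan is to construct an explicit planar graph $G$ together with a $3$-fold correspondence cover $\H = (L,H)$ for which no $\H$-coloring is $3$-defective; that is, under every choice of colors $\phi(v)\in L(v)$ some vertex has defect at least $4$. Note that, unlike a proper coloring, an $\H$-coloring always exists, so the entire content is to force the defect to be large \emph{somewhere}. The skeleton of the construction is a single high-degree \emph{trap} vertex $w$ with $L(w)=\{1,2,3\}$ and $\deg_G(w)=12$. I would partition the neighbors of $w$ into three groups $A_1,A_2,A_3$ of size $4$, and for each $c\in\{1,2,3\}$ and each $u\in A_c$ set the matching on the edge $uw$ so that a prescribed color of $u$ conflicts with the color $c$ at $w$. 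If I can guarantee that in every $3$-def $\H$-coloring each such $u$ actually receives its prescribed color, then whatever color $w$ chooses it conflicts with all four vertices of the corresponding group, giving $\defect(w)\ge 4$, the desired contradiction.

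The work therefore reduces to building, for each neighbor $u$, a planar \emph{forcing gadget} $P_u$ (attached to $G$ only through $u$) that pins $u$ to its prescribed color in any coloring that is $3$-def on $P_u$. I would build these gadgets in the spirit of the classical non-$4$-choosability constructions, but adapted to correspondence coloring, where the freedom to use twisted matchings on each edge makes logical ``implication'' gadgets easier to realize than in the list setting. Concretely, I would assemble $P_u$ from small \emph{reduction gadgets}, each of which forbids one color at its output vertex, chaining two of them to pin $u$. The two properties I must verify of a fully assembled $P_u$ are: (a) \textbf{satisfiability}, that when $u$ takes its prescribed color the gadget admits a coloring with every internal vertex of defect $0$ and $u$ of gadget-defect $0$ (so the single conflict along $uw$ does not overload $u$); and (b) \textbf{rigidity}, that if $u$ deviates from its prescribed color then some vertex inside $P_u$ is forced to defect at least $4$ using gadget edges alone, hence independently of the color chosen at $w$. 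Planarity is maintained by drawing each $P_u$ inside its own face of the ``flower'' formed by $w$ and its twelve neighbors, and by nesting the reduction gadgets with their output vertices on the boundary.

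The hard part will be establishing rigidity within the defect-$3$ regime, and indeed property (b) is the technical heart of the whole argument. The danger is that the colorer escapes a forcing gadget not by violating an implication but by \emph{spending defect inside the gadget}: a cleverly placed conflict on a gadget edge might relieve the pressure that is supposed to pin $u$, and a naive ``propagate one forbidden color per level'' design only ever produces single conflicts, never an accumulation to four. I would neutralize this by designing the reduction gadgets so that their internal vertices have degree small enough that any conflict they absorb is wasted---it cannot decrease the number of colors the gadget blocks at its output---while arranging a single distinguished vertex of degree at least four at which four forced conflicts coincide precisely when $u$ deviates. Verifying that the finitely many local color patterns behave as claimed is a bounded case analysis (confirmable by computer if necessary), and producing one gadget that simultaneously satisfies (a) and (b) is where essentially all the difficulty lies. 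Once a correct forcing gadget is in hand, the final count at $w$ and the planar assembly are routine, yielding a planar graph that is not $3$-def $3$-correspondable.
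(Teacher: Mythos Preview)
Your proposal is a strategy outline, not a proof: the central object---the forcing gadget $P_u$ that pins its boundary vertex to a prescribed color in every $3$-defective coloring---is never actually constructed. You specify the properties (a) and (b) it must satisfy and sketch how you would assemble it from ``reduction gadgets,'' but you then concede that ``producing one gadget that simultaneously satisfies (a) and (b) is where essentially all the difficulty lies'' and leave that difficulty open. Without an explicit gadget and a verified rigidity argument there is nothing to check, and it is not obvious that such a gadget exists: with three defects available at every vertex, a colorer can absorb several implication edges before any single vertex reaches defect four, so naive chains of reductions typically leak rather than concentrate conflict at one place.

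The paper avoids this obstacle by relaxing what the gadget must do. Rather than a single trap vertex whose neighbors are each rigidly pinned, it uses \emph{two} trap vertices $u,v$ and a small explicit gadget $T$ glued to both, with the much weaker property that every $3$-def coloring of $T$ incurs one defect at $u$ \emph{or} at $v$; this is a short case analysis on a single picture. Taking $63=9\cdot 7$ copies of $T$ all sharing the same $u$ and $v$, with seven copies made ``bad'' for each of the nine pairs $(\alpha,\beta)\in L(u)\times L(v)$, pigeonhole finishes: whatever colors $u$ and $v$ receive, seven defects land on $\{u,v\}$, so one of them has defect at least four. The trade-off---many copies plus pigeonhole in exchange for a far weaker per-gadget guarantee---is precisely the idea your approach is missing, and it replaces your unspecified hard gadget with an explicit easy one.
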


This implies the result mentioned above of Cho--Choi--Kim--Park--Shan--Zhu that there exists a graph not decomposable into $H_1$ and $H_2$ with $H_1$ being 2-degenerate and $H_2$ having maximum degree 3 \cite[Proposition 1.4]{cho2022decomposing}. Furthermore, our result together with Theorem 1.3 of \cite{cho2022decomposing} implies that the minimum $d$ such that all planar graphs are $d$-def $3$-colorable is between 4 and 6. 

Our next result concerns the relationship between proper and defective correspondence coloring. We show that Ma--Xu--Zhu's result that there exists a $1$-def $3$-choosable planar graphs that is not $4$-choosable extends to correspondence coloring:

\begin{theorem}\label{theorem: 1d3corr but not 4corr}
There exists a planar graph that is $1$-def $3$-correspondable, but not $4$-correspondable.
\end{theorem}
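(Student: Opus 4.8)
The plan is to decouple the two halves of Theorem~\ref{theorem: 1d3corr but not 4corr} and to exploit the fact that correspondence coloring refines list coloring. Recall that any $k$-fold list assignment can be realized as a $k$-fold correspondence cover whose matchings identify equal colors; hence a graph that is not $4$-choosable is automatically not $4$-correspondable. So it suffices to produce a single planar graph $G$ that is (i) not $4$-choosable and (ii) $1$-def $3$-correspondable. For (i) I would take the non-$4$-choosable planar graph underlying Ma--Xu--Zhu's construction \cite{ma2023two} (which is in turn built on the classical non-$4$-choosable planar graphs of Voigt \cite{voigt1993list}) and exhibit the associated $4$-fold list assignment $L$ with no proper $L$-coloring. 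Reading $L$ as a correspondence cover with identity matchings then gives a $4$-fold cover of $G$ admitting no proper coloring, which is exactly (i). If the original graph turns out not to satisfy (ii) in the correspondence setting, I would augment it with additional low-degree gadgets that keep it planar and non-$4$-choosable while forcing the sparse local structure needed below.

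The genuine content is (ii): upgrading $1$-def $3$-choosability to $1$-def $3$-correspondability. I would not quote the list-coloring statement as a black box, since the lists-to-covers passage is never automatic in the ``upper bound'' direction. Instead I would fix an arbitrary $3$-fold correspondence cover $\H = (L, H)$ of $G$ and construct a $1$-def $\H$-coloring $\phi$ directly, by a Thomassen-style structural induction on $|V(G)|$. The one fact that makes the induction portable from lists to covers is that, in a $3$-fold cover, each already-colored neighbor of a vertex $v$ forbids \emph{at most one} of the three colors available at $v$, through the single matching joining the colors at the two vertices; this holds verbatim for lists and for covers. The scheme is to locate a reducible configuration of $G$ (a vertex or small set of low degree guaranteed by planarity and the bounded local density of $G$), delete it, extend the coloring by induction, and then color the deleted vertices so that each newly created conflict is charged to the defect budget of exactly one endpoint. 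Throughout I would maintain the invariant that $\defect_\phi(v) \leq 1$ for every colored $v$, using defect $0$ at vertices that see at most two colored neighbors and spending the single allowed defect only at the controlled set of vertices that are forced to see three.

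The main obstacle---and the only place where Ma--Xu--Zhu's argument must be genuinely reworked---is wherever their proof uses the \emph{transitivity} of the relation ``receives the same color.'' In list coloring, if two neighbors $u, w$ of $v$ both conflict with $v$, then $u$ and $w$ carry the same color and so conflict with each other; in correspondence coloring this inference is false, because the matching identifying colors of $u$ with $\phi(v)$ and the matching identifying colors of $w$ with $\phi(v)$ may pair $\phi(v)$ with unrelated colors of $u$ and $w$. Every step of the inductive coloring that implicitly relied on such a deduction---typically the steps bounding how a defect incurred at one vertex propagates to its neighbors---must be replaced by bookkeeping that treats each matching separately and charges defect purely locally. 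I therefore expect the bulk of the work to be a case analysis of the low-degree configurations of $G$, verifying for each one that, \emph{regardless of how the surrounding matchings are arranged}, enough colors remain to extend the partial $1$-def coloring without pushing any vertex past defect $1$. Planarity together with the sparse local structure of $G$ is what guarantees such a reducible configuration always exists, closing the induction and hence establishing $1$-def $3$-correspondability while (i) supplies non-$4$-correspondability.
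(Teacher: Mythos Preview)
Your reduction for part~(i) is sound in principle: if the witness graph $G$ happens to be non-$4$-choosable, then reading the bad list assignment as a correspondence cover with identity matchings immediately gives non-$4$-correspondability. The paper, however, does not rely on this shortcut; it builds an explicit $4$-fold correspondence cover of its graph $T(4)$ and checks directly that no proper coloring exists (\S\ref{Section: T(8) not $4$-correspondable}). So here you and the paper diverge only in method, not in outcome.

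Part~(ii) is where your proposal has a genuine gap. A Thomassen-style induction on $|V(G)|$ cannot work as stated, because the inductive hypothesis would have to apply to a class of planar graphs closed under vertex deletion, and no such class can both contain a non-$4$-choosable graph and consist entirely of $1$-def $3$-correspondable graphs in any obvious hereditary way---recall that there are planar graphs that are not even $1$-def $3$-\emph{colorable} \cite{cowen1986defective}. Your plan to ``locate a reducible configuration guaranteed by planarity'' therefore cannot be a generic low-degree argument; it must exploit the specific architecture of the chosen $G$, and you have not said what that architecture is or what the configurations are. The fallback of ``augmenting with low-degree gadgets'' is likewise unsubstantiated: adding gadgets to make the graph sparser could easily destroy non-$4$-choosability.

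The paper's route is entirely different and does not proceed by induction at all. The graph $T(4)$ is four copies of a nine-vertex gadget $T$ glued at two hub vertices $u,v$. Given a $3$-fold cover $\H$, one first commits to colors $(\alpha,\beta)$ for $u,v$ and then colors each copy of $T$ separately. The heart of the argument is a classification of the induced subcover on each copy as \emph{good} or \emph{bad} for $(\alpha,\beta)$, via a finite case analysis of ``twisted'' and ``wedged'' configurations on $K_4$ minus an edge (Definitions~\ref{def: twisted and wedged}, \ref{wedged 2}, \ref{def: good and bad}). The key global step is a counting lemma (Lemma~\ref{lemma: bad for at most 3}): each copy of $T$ is bad for at most six of the nine pairs in $L(u)\times L(v)$, so with only four copies a double count forces some pair $(\alpha,\beta)$ to be bad for at most two copies. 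Good copies are then colored with zero defect at the hubs (Lemma~\ref{Lemma: good}), and the at most two bad copies absorb one defect each at $u$ and at $v$ respectively (Lemma~\ref{lemma: bad}). This counting-plus-case-analysis argument is what replaces the transitivity you correctly flagged as the obstacle; your outline does not supply any analogue of it.
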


While the construction used is similar to that of Ma, Xu, and Zhu, the proof is considerably more involved. Indeed, a large portion of this manuscript is dedicated to proving this theorem.

Along these same lines, we note that the graph used to prove Theorem \ref{Theorem: not 3 def 3 corr} is $4$-correspondable, and thus:

\begin{corollary}
    There exists a planar graph that is $4$-correspondable but not $3$-def $3$-correspondable.
\end{corollary}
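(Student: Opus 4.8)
The plan is to reuse the single graph already built for Theorem~\ref{Theorem: not 3 def 3 corr} rather than construct anything new. Let $G$ be the planar graph produced in the proof of that theorem. Theorem~\ref{Theorem: not 3 def 3 corr} already tells us $G$ is not $3$-def $3$-correspondable, so the entire content of the corollary reduces to the single assertion that $G$ is $4$-correspondable. In other words, I would verify that one graph simultaneously witnesses both clauses, which is exactly what the corollary asserts.

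To establish that $G$ is $4$-correspondable, the cleanest route is through degeneracy. Recall from the discussion above that every $d$-degenerate graph is $(d+1)$-correspondable \cite{bernshteyn2023weak}; hence it suffices to show $G$ is $3$-degenerate, i.e.\ to exhibit a linear ordering of $V(G)$ in which every vertex has at most three neighbors preceding it (equivalently, every nonempty subgraph of $G$ contains a vertex of degree at most $3$). I would obtain such an ordering directly from the construction, identifying the gadgets out of which $G$ is assembled and peeling vertices off in the reverse of the order in which they were introduced. For the planar gadget constructions typical of this setting---where new vertices are attached with small degree---each removed vertex should have back-degree at most $3$, yielding the required ordering and hence $4$-correspondability.

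The main obstacle is therefore not the logical structure of the argument, which is immediate, but the bookkeeping needed to confirm that $G$ really is $3$-degenerate (or otherwise $4$-correspondable). This depends entirely on the internal anatomy of the Theorem~\ref{Theorem: not 3 def 3 corr} construction: one must check that no subgraph of $G$ forces every vertex to have degree at least $4$. Should $G$ fail to be $3$-degenerate, the fallback is a direct correspondence-coloring argument in the spirit of Thomassen's $5$-choosability proof, adapted to the four-color correspondence setting and to the explicit cover structure of $G$, ordering the vertices so that each can be colored while respecting the at-most-four constraints imposed by its earlier neighbors.
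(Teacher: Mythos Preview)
Your proposal is correct and matches the paper's approach exactly: the paper simply observes, in the last sentence of the proof of Theorem~\ref{Theorem: not 3 def 3 corr}, that the graph $G$ constructed there is $3$-degenerate and hence $4$-correspondable (via Lemma~\ref{greedy}), which together with Theorem~\ref{Theorem: not 3 def 3 corr} yields the corollary. Your fallback Thomassen-style argument is unnecessary, since the $3$-degeneracy of $G$ is straightforward to verify from its construction (the $x_j$'s and $y_j$'s in each copy of $T$ have degree~$3$, and after their removal the $z_i$'s have degree at most~$3$, etc.).
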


Finally, we prove results about \textit{outerplanar graphs}.
Recall a planar graph is \textit{outerplanar} if it can be embedded into the plane so that all vertices lie on the boundary of the outer face. Cowen--Cowen--Woodall \cite{cowen1986defective} completely characterized the pairs $(d,k)$ such that all outerplanar graphs are $d$-def $k$-colorable. Namely, they proved all planar graphs are $2$-def $2$-colorable, and that this cannot be improved to 1 defect. \v{S}krekovski \cite{vskrekovski1999list} and independently Eaton--Hull \cite{eaton1999defective} generalized this to list coloring, demonstrating that all outerplanar graphs are $2$-def $2$-choosable. We prove the analogous result for correspondence coloring:

\begin{theorem}
    \label{theorem: outerplanar graph is 3def 2corr}
    All outerplanar graphs are $3$-def $2$-correspondable. Furthermore, 3 defects is best possible.
\end{theorem}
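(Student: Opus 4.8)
My plan is to first reduce the positive statement to \emph{maximal} outerplanar graphs, i.e. triangulated polygons (equivalently, outerplanar $2$-trees). This is legitimate because every outerplanar graph is a spanning subgraph of a maximal outerplanar graph $G'$ on the same vertex set, and any $2$-fold cover $\H=(L,H)$ of $G$ extends to a $2$-fold cover $\H'$ of $G'$ by placing empty matchings on the added edges; since deleting edges cannot increase any defect, a $3$-def $\H'$-coloring of $G'$ restricts to a $3$-def $\H$-coloring of $G$. In particular this absorbs all connectivity issues, so no separate block/cut-vertex analysis is needed. The crucial structural fact I would lean on is that outerplanar graphs are $K_{2,3}$-free, so \emph{no two vertices have three common neighbors}; equivalently every edge lies in at most two triangles. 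Combined with the standard $2$-tree construction order $v_1,v_2,\ldots,v_n$ (where $v_1v_2v_3$ is a triangle and each later $v_i$ has exactly two earlier neighbors, which are adjacent), this says each edge $pq$ acquires at most two ``children'' attached to it.

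\textbf{The coloring strategy.} I would color in the build order, maintaining a partial $\H$-coloring, and exploit the following elementary observation: when a vertex $v$ is added, its two parents $p,q$ are already colored, and because each matching $M_{vp},M_{vq}$ matches a fixed parent color to at most one color of $v$, I can always choose $\phi(v)$ so that $v$ conflicts with \emph{at most one} of $p,q$, and moreover I may prescribe \emph{which} parent to protect. Thus each vertex, at the moment of insertion, contributes at most one unit of defect to a single, chosen neighbor, while its own defect from its parents is at most $1$. For the base triangle a short $\mathbb{F}_2$-computation shows that its three colors can be chosen with at most one conflicting edge, so each base vertex starts with defect at most $1$. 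The remaining defect at any vertex $w$ then comes only from its ``children'' $u$ that, when inserted, chose to charge $w$ rather than their other parent. The plan is to make these choices adaptively---always protecting the parent that is currently more heavily loaded---so that the load is balanced and no vertex ever reaches defect $4$.

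\textbf{The main obstacle.} The hard part will be proving that this adaptive rule (or a suitable strengthening of it) really keeps every defect at most $3$. The danger is precisely that two \emph{adjacent} vertices $p,q$ could both reach defect $3$ through unrelated children before the (at most two) common children on the edge $pq$ are colored; inserting such a common child would then force defect $4$. Ruling this out is where I expect the real work to lie. I would attack it by replacing the plain induction with a strengthened invariant that reserves a unit of defect budget at a designated boundary edge (the edge about to be exposed by an ear removal), and by using the $K_{2,3}$-freeness---each edge being in at most two triangles---to control how quickly the budget at an edge can be consumed. Managing this budget through the removal/reinsertion of an ear tip $v$ (degree $2$ at that moment, hence contributing at most $1$ to each of its two neighbors) is the crux, and will likely require a careful case analysis of how the designated edge interacts with the available ears, together with the small base cases.

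\textbf{Optimality.} For the ``$3$ defects is best possible'' half, I would exhibit an explicit outerplanar graph together with a $2$-fold cover admitting no $2$-def coloring. I note first that a single fan does \emph{not} suffice: its apex can always be kept conflict-free by coloring every link vertex on a color safe for the apex, and the link, being a path, then has defect at most $2$ automatically. The construction must therefore force the conflicts \emph{globally}: I would build a triangulated polygon whose cover creates a frustrated system of implications---choosing a vertex's ``safe'' color propagates a constraint along the triangulated strip so that no assignment can simultaneously keep all defects at most $2$---engineered so that at least three conflicts are unavoidably forced at a single vertex. Since a fixed finite graph with a fixed cover has only finitely many $\H$-colorings, verifying that the designed cover blocks every $2$-def coloring is a direct finite check; pinning down a minimal such frustrated gadget is the part of this half that requires care.
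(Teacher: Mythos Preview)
Your plan is broadly the same route the paper takes: reduce to maximal outerplanar graphs (near triangulations), strengthen the induction by tracking defect budgets at a designated boundary edge, and build/peel through ears. The gap is that you explicitly leave the ``crux'' unfilled --- you say you would ``replace the plain induction with a strengthened invariant that reserves a unit of defect budget at a designated boundary edge'' but never state what that invariant is, and the adaptive ``protect the more loaded parent'' rule alone is not shown to suffice. The paper's contribution is precisely this missing invariant: for a near triangulation with outer cycle $C$, an outer edge $uv\in E(C)$, and any precoloring of $u,v$, one can extend to a $3$-def $\H$-coloring with $\defect(u)\leq 1$, $\defect(v)\leq 2$ when $\phi(u)\sim\phi(v)$, and $\defect(u)=0$, $\defect(v)\leq 1$ otherwise. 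The asymmetry between $u$ and $v$ and the two-case split on whether the precolors conflict are what make the induction close; the inductive step either removes a degree-$2$ vertex $u$ (reapplying the hypothesis to the edge $xv$ with $\phi(x)$ chosen to avoid $\phi(u)$), or, when $\deg(u),\deg(v)>2$, finds a common neighbor $z$ on the outer cycle, splits $G$ along the chords $uz$ and $vz$, and applies the hypothesis to each piece with $\phi(z)$ chosen to avoid $\phi(u)$. Your $K_{2,3}$-freeness observation is not actually needed once this invariant is in hand.

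For optimality, your instinct that a bare fan is not enough is correct, but you stop short of a construction. The paper's example is exactly ``fan plus one extra layer'': a fan on $u,z_1,\dots,z_{12}$, with an additional vertex $y_i$ attached to each edge $z_iz_{i+1}$, and a cover in which every matching is the identity except $M_{y_iz_{i+1}}$, which swaps the two colors. In any $2$-def coloring at most two $z_i$ conflict with $u$, so four consecutive $z_j,\dots,z_{j+3}$ share a color; then whichever color $y_{j+1}$ takes forces $\defect(z_{j+1})=3$ or $\defect(z_{j+2})=3$. This is the concrete ``frustrated strip'' you were gesturing at.
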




We finish this section with an outline of the rest of this paper. In \S\ref{sec: correspondence}, we give an overview of list and correspondence coloring, with necessary definitions. The reader familiar with correspondence coloring may skip this section (but we note that we use a definition of correspondence cover in which each list of a vertex forms an independent set, rather than a clique; this allows us to draw diagrams cleaner). In \S\ref{sec: bounds on d}, we prove Theorem \ref{Theorem: not 3 def 3 corr}. In \S\ref{section: outerplanar}, we consider outerplanar graphs, proving Theorem \ref{theorem: outerplanar graph is 3def 2corr}. A substantial amount of work goes into proving Theorem \ref{theorem: 1d3corr but not 4corr}, for which we reserve \S\S\ref{section: proof of 1def3corr but not 4corr}--\ref{Section: T(8) not $4$-correspondable}. Finally, in \S\ref{section: open}, we  discuss further directions and open problems.

\section{List and Correspondence Coloring}\label{sec: correspondence}
In this section, we give an overview of list and correspondence coloring. A reader familiar with these definitions may omit this section; though we do caution the reader that we use a slightly different definition of correspondence coloring than usual, in that we do not form cliques between the colors in the list of a vertex, and instead leave these as independent sets. Much of this text is taken directly from \cite{anderson2024coloring}. 

Introduced independently by Vizing \cite{vizing1976coloring} and Erd\Hungarian{o}s--Rubin--Taylor \cite{erdos1979choosability}, \textit{list coloring} is a generalization of graph coloring in which each vertex is assigned a list of colors from which it may be colored. Formally, we say $L : V(G) \to 2^{\N}$ is a \textit{list assignment} for $G$, and an \textit{$L$-coloring} of $G$ is a proper coloring $\phi: V(G) \to \N$ such that $\phi(v) \in L(v)$ for each $v \in V(G)$. When $|L(v)| \geq k$ for each $v \in V(G)$, where $k \in \N$, we say $L$ is \textit{$k$-fold}. The \textit{list chromatic number} of $G$, denoted $\chi_{\ell}(G)$, is the smallest $k$ such that $G$ has an $L$-coloring for every $k$-fold list assignment $L$ for $G$. When $\chi_{\ell} \leq k$, we say $G$ is \textit{$k$-choosable} or $k$ list-colorable.

It is often convenient to view list coloring from a different perspective. Given a graph $G$ and a list assignment $L$ for $G$, we create an auxiliary graph $H$ as follows: 
\begin{align*}
    V(H) &\defeq \{(v, c) \in V(G) \times \N : c \in L(v)\} \\
    E(H) &\defeq \{\{(v, c), (u, d)\} : vu \in E(G),\, c = d\}\}.
\end{align*}
We call $H$ a \textit{cover graph} of $G$, and the pair $(L, H)$ a \textit{list cover} of $G$. An $L$-coloring of $G$ is then an independent set $I$ in $H$ which satisfies $|I| = |V(G)|$, and, for each vertex $v \in V(G)$, there exists $c \in \N$ such that $(v, c) \in I$. Thus $I$ selects exactly one vertex of $H$ of the form $(v,c)$ for each $v \in V(G)$.

\emphdef{Correspondence coloring} (also known as \emphdef{DP-coloring}) is a generalization of list coloring introduced by Dvo\v{r}\'ak and Postle \cite{dvovrak2018correspondence} in order to solve a question of 
Borodin. Just as in list coloring, each vertex is assigned a list of colors, $L(v)$;
in contrast to list coloring, though, the identifications between the colors in the lists are allowed to vary from edge to edge.
That is, each edge $uv \in E(G)$ is assigned a matching $M_{uv}$ (not necessarily perfect, not necessarily the identity, and possibly empty) from $L(u)$ to $L(v)$.
A proper correspondence coloring is then a mapping $\phi : V(G) \to \N$ satisfying $\phi(v) \in L(v)$ for each $v \in V(G)$ and $\phi(u)\phi(v) \notin M_{uv}$ for each $uv \in E(G)$.
Formally, correspondence colorings are defined in terms of an auxiliary graph known as a \emphdef{correspondence cover} of $G$.

\begin{definition}[Correspondence Cover]\label{def:corr_cov}
    A \emphdef{correspondence cover} of a graph $G$ is a pair $\mathcal{H} = (L, H)$, where $H$ is a graph and $L \,:\,V(G) \to 2^{V(H)}$ such that:
    \begin{enumerate}[label= ({\arabic*}), leftmargin = \leftmargin + 1\parindent]
        \item The set $\{L(v)\,:\, v\in V(G)\}$ forms a partition of $V(H)$,
        \item\label{dp:list_independent} For each $v \in V(G)$, $L(v)$ is an independent set in $H$, and
        \item\label{dp:matching} For each $u, v\in V(G)$, the edge set of $H[L(u) \cup L(v)]$ forms a matching (which we call $M_{uv}$), which is empty if $uv \notin E(G)$.
    \end{enumerate}
    We refer to $(3)$ as the \textit{matching condition}.
\end{definition}
We call the vertices of $H$ \emphdef{colors}.
For $c \in V(H)$, we let $L^{-1}(c)$ denote the \emphdef{underlying vertex} of $c$ in $G$, i.e., the unique vertex $v \in V(G)$ such that $c \in L(v)$. Given a list assignment $L$ of a graph $G$, we let $\ell(v) \defeq |L(v)|$ for $v \in V(G)$. If two colors $c$, $c' \in V(H)$ are adjacent in $H$, we say that they \emphdef{correspond} to each other, or that they \textit{conflict} with each other, and write $c \sim_H c'$, or simply $c \sim c'$ when $H$ is clear from context. Instead of writing $c_1 \sim c_2$ and $c_2 \sim c_3$, we will often write $c_1 \sim c_2 \sim c_3$ (but note it is still possible $c_1 \not\sim c_3$.)

Many times we will abuse notation and create a correspondence cover $(L, H)$ for a graph $G$ by using the same label for different vertices of $H$. For example, we may let $\H$ be the correspondence cover for $G$ by taking $L(v) = \{1, 2, 3\}$ for all $v \in V(G)$ (as opposed to something clumsier such as $L(v) = \{(v,1), (v,2), (v,3)\}$.) In this way, we view a correspondence cover for $G$ as a list assignment for $G$ where the lists may use the same colors, but in which colors conflict differently based on each matching $M_e$ for $e \in E(G)$.

An \emphdef{$\mathcal{H}$-coloring} is a mapping $\phi \colon V(G) \to V(H)$ such that $\phi(v) \in L(v)$ for all $v \in V(G)$. Similarly, a \emphdef{partial $\mathcal{H}$-coloring} is a partial mapping $\phi \colon V(G) \dasharrow V(H)$ such that $\phi(v) \in L(v)$ whenever $\phi(v)$ is defined.
A (partial) $\mathcal{H}$-coloring $\phi$ is \emphdef{proper} if the image of $\phi$ is an independent set in $H$, i.e., if $\phi(u) \not \sim \phi(v)$
for all $u$, $v \in V(G)$ such that $\phi(u)$ and $\phi(v)$ are both defined. Notice, then, that the image of a proper $\mathcal{H}$-coloring of $G$ is exactly an independent set $I \subseteq V(H)$ with $|I \cap L(v)| = 1$ for each $v \in V(G)$.

A correspondence cover $\mathcal{H} = (L,H)$ is \emphdef{$k$-fold} if $|L(v)| \geq k$ for all $v \in V(G)$. The \emphdef{correspondence chromatic number} of $G$, denoted by $\chi_{c}(G)$, is the smallest $k$ such that $G$ admits a proper $\mathcal{H}$-coloring for every $k$-fold correspondence cover $\mathcal{H}$. If $\chi_{c}(G) \leq k$, then we say $G$ is \textit{$k$-correspondable}.

Note that a list cover $(L, H)$ of $G$ is a correspondence cover for $G$, where each matching $M_{uv}$ is such that $(u,c)$ matches with $(v,c)$ for each $c \in \N$. As classical coloring is the special case of list coloring in which all lists are identical, it follows that $\chi(G) \leq \chi_{\ell}(G) \leq \chi_{c}(G)$. We also note that the following lemma which follows from a greedy coloring (see \cite{bernshteyn2023weak} for a formal proof:)

\begin{lemma}\label{greedy}
    If $G$ is $d$-degenerate, then it is $d+1$-correspondable.
\end{lemma}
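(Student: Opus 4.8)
The plan is to exploit a \emph{degeneracy ordering} of $V(G)$ together with a greedy coloring argument, with the matching condition of Definition~\ref{def:corr_cov} doing the crucial work. Recall that $G$ being $d$-degenerate is equivalent to the existence of a linear ordering $v_1, v_2, \ldots, v_n$ of $V(G)$ in which every vertex $v_i$ has at most $d$ neighbors among its predecessors $v_1, \ldots, v_{i-1}$. First I would establish this ordering: since every subgraph of $G$ has a vertex of degree at most $d$, one can repeatedly delete a minimum-degree vertex, assigning it the largest remaining index, so that each $v_i$ has degree at most $d$ in $G[\{v_1, \ldots, v_i\}]$, i.e., at most $d$ \emph{back-neighbors}.

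Next, fix an arbitrary $(d+1)$-fold correspondence cover $\mathcal{H} = (L, H)$ for $G$, and construct a proper $\mathcal{H}$-coloring $\phi$ by processing the vertices in the order $v_1, \ldots, v_n$. Suppose $\phi(v_1), \ldots, \phi(v_{i-1})$ have been chosen so that the partial coloring is proper, and consider coloring $v_i$. The only already-colored vertices whose colors could conflict with a choice for $v_i$ are its back-neighbors $v_j$ (with $j < i$), of which there are at most $d$. The key observation is that each such $v_j$ eliminates \emph{at most one} candidate color from $L(v_i)$: by the matching condition, the edge set of $H[L(v_j) \cup L(v_i)]$ is a matching $M_{v_j v_i}$, so the chosen color $\phi(v_j)$ conflicts with at most one color of $L(v_i)$. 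Hence at most $d$ colors of $L(v_i)$ are forbidden, and since $|L(v_i)| \geq d+1$ there is at least one color $c \in L(v_i)$ with $c \not\sim \phi(v_j)$ for every back-neighbor $v_j$; set $\phi(v_i) \defeq c$.

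This keeps the partial coloring proper at every step, so $\phi$ is a proper $\mathcal{H}$-coloring of $G$. As $\mathcal{H}$ was an arbitrary $(d+1)$-fold cover, $G$ is $(d+1)$-correspondable.

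There is no serious obstacle here; this is simply the correspondence-coloring analogue of the classical ``degeneracy implies $(d+1)$-colorability'' greedy bound. The one point that genuinely uses the correspondence structure — rather than merely the list structure — is the matching condition, which guarantees that a colored neighbor rules out only a single color rather than several. I would take care to state this explicitly, since it is precisely where the argument would break down if $H[L(u) \cup L(v)]$ were permitted to be an arbitrary bipartite graph instead of a matching.
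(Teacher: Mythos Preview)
Your argument is correct and is precisely the greedy-coloring proof the paper alludes to (the paper does not spell out a proof itself, but simply remarks that the lemma ``follows from a greedy coloring'' and cites \cite{bernshteyn2023weak}). Your emphasis on the matching condition as the one place where the correspondence structure is genuinely used is exactly the right observation.
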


We now provide a few definitions:

\begin{definition}
    Let $G_1$ and $G_2$ be isomorphic (say by $\sigma$) and let $\H_1 = (L_1, H_1)$ and $\H_2 = (L_2, H_2)$ be correspondence covers of $G_1$ and $G_2$ respectively. Then we say $\H_1$ and $\H_2$ are \emphdef{isomorphic}, denoted $\H_1 \cong \H_2$, if there exists an isomorphism $\psi$ from $H_1$ to $H_2$ that preserves lists, i.e., for all $v \in G_1$ and $c \in H_1$ it follows $c \in L_1(v) \iff \psi(c) \in L_2(\sigma(v))$.
\end{definition}

\begin{definition}
    Let $G$ be a graph and $\H$ a correspondence cover for $G$. Suppose $\phi$ is a partial $\mathcal{H}$-coloring of $G$. Define the following:
    \begin{enumerate}
        \item For each $v \in G \setminus \dom(\phi)$, let $L_\phi(v) \defeq \{c \in L(v) : \forall\, c' \in \im(\phi)\, (c \notsim c')\}$.
        \item $H_\phi \defeq H[\bigcup_{v \in G \setminus \dom(\phi)}L_\phi(v)]$.
    \end{enumerate}
    Then the correspondence cover $\H_\phi \defeq (L_\phi, H_\phi)$ of $G \setminus \dom(\phi)$ is the \emphdef{subcover of $\H$ induced by $\phi$}.
\end{definition}

\begin{remark}
\label{remark: simplify cover}
    We make two observations about (defective) correspondence coloring that allow us to simplify various proofs. First, suppose $\H = (L, H)$ is a $k$-fold cover for $G$ for $k \in \N$. Then if $\H^*$ is a correspondence cover for $G$ formed by removing colors from $H$ so that $|L(v)| = k$ for each $v \in V(G)$, then a $d$-def $\H^*$-coloring is a $d$-def $\H$-coloring. Second, if $\H^*$ is a correspondence cover for $G$ formed by adding edges to $\H$ so that every matching $M_e$ for $e \in E(G)$ is maximal, then a $d$-def $\H^*$ coloring is a $d$-def $\H$-coloring. Thus there will usually be no harm in assuming $\H$ has list sizes exactly $k$ and that all matchings are maximal.
\end{remark}

\section{Proof of Theorem \ref{Theorem: not 3 def 3 corr}} \label{sec: bounds on d}
In this section, we prove Theorem \ref{Theorem: not 3 def 3 corr} by constructing a planar graph $G$ that is not $3$-def $3$-correspondable. First, we make some preliminary definitions and prove a lemma.

Let $T$ and $H$ be the graphs depicted in Fig.~\ref{fig: GadgetTforNot3Def3Corr} and Fig.~\ref{fig: GadgetCoverNot3Def3Corr}, respectively.
\begin{figure}
    \centering
    \includegraphics[scale = .7]{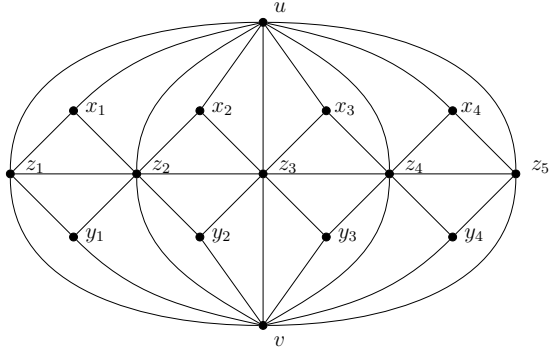}
    \caption{Graph $T$ in the proof of Theorem ~\ref{Theorem: not 3 def 3 corr}.}
    \label{fig: GadgetTforNot3Def3Corr}
\end{figure}
\begin{figure}
    \centering
    \includegraphics[scale = .7]{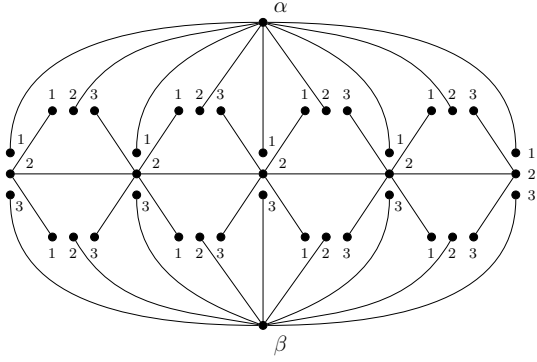}
    \caption{Graph $H$; a cover for graph $T$ in the proof of Theorem~\ref{Theorem: not 3 def 3 corr}.}
    \label{fig: GadgetCoverNot3Def3Corr}
\end{figure}
Let $L : V(G) \to 2^{V(H)}$ by $L(u) = \{\alpha\}$, $L(v) = \{\beta\}$, and $L(x) = \{1, 2, 3\}$ for all $x \in V(G)\setminus\{u, v\}$. Then $\H \defeq (L, H)$ is a correspondence cover for $T$; we call such a cover \textit{bad} for the pair $(\alpha, \beta)$.

\begin{lemma}\label{lemma: bad cover for 3def3corr}
     Let $\H$ be a correspondence cover for $T$ that is bad for $(\alpha, \beta)$. Then any $3$-defective $\H$-coloring of $T$ satisfies $\defect(u) = 1$ or $\defect(v) = 1$.
\end{lemma}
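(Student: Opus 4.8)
The plan is to analyze the gadget $T$ together with its bad cover $\H$ and argue that the combinatorial structure of the matchings forces a conflict at either $u$ or $v$. Since I cannot see the exact figures, I will describe the strategy as it must go. First I would fix a $3$-defective $\H$-coloring $\phi$ of $T$ and, since $L(u)=\{\alpha\}$ and $L(v)=\{\beta\}$ are singletons, observe that necessarily $\phi(u)=\alpha$ and $\phi(v)=\beta$; thus the values at $u$ and $v$ are forced. The defects $\defect(u)$ and $\defect(v)$ then count how many neighbors of $u$ (resp. $v$) receive colors conflicting with $\alpha$ (resp. $\beta$) under the relevant matchings. The goal is to show we cannot simultaneously have $\defect(u)\neq 1$ and $\defect(v)\neq 1$.

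The key idea I would pursue is an \emph{averaging or counting argument over the internal vertices} of $T$. The gadget is presumably built so that each internal vertex $x$ (with list $\{1,2,3\}$) has its three color choices constrained by the matchings to $u$, to $v$, and to other internal vertices. I would set up, for each admissible coloring of the internal vertices, a count of the total number of conflicts incident to $u$ and to $v$, and show by the structure of $H$ that this total is bounded below in a way that prevents both defects from avoiding the value $1$. Concretely, I expect the gadget to be designed so that $\defect(u)+\defect(v)$ is constrained to a small range, and that the only way to keep both defects out of $\{1\}$ (i.e., both in $\{0,2,3\}$) is ruled out by the matching condition forcing at least one neighbor into conflict. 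A clean way to organize this is a case analysis on $\defect(u)\in\{0,1,2,3\}$: if $\defect(u)=0$, trace through the forced colors on $u$'s neighbors and propagate the constraints through $T$ to show $\defect(v)=1$ is forced; symmetrically handle $\defect(u)=2$ and $\defect(u)=3$, each time deriving $\defect(v)=1$ (or reaching a contradiction that eliminates the case).

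The main obstacle, and the part requiring the most care, is the \emph{propagation of matching constraints through the interior of $T$}. Because correspondence coloring allows each edge's matching to be an arbitrary (maximal) matching rather than the identity, the bad cover can route conflicts in a deliberately adversarial way, and I must verify that no ``exceptional'' matching assignment in $\H$ lets both $u$ and $v$ escape defect exactly $1$. By Remark~\ref{remark: simplify cover} I may assume all matchings are maximal and all lists have size exactly the fold number, which pins down the conflict pattern at each internal vertex and makes the case analysis finite and tractable. I would exploit the symmetry between $u$ and $v$ (the gadget $T$ is almost certainly symmetric under an automorphism swapping $u$ and $v$) to halve the casework. The delicate step is checking that in the extremal cases---where one tries to push all conflicts onto one side---the independent-set requirement at each internal vertex (condition~\ref{dp:list_independent}) together with the matching condition (condition~\ref{dp:matching}) leaves no valid choice except one creating exactly one conflict at the opposite anchor vertex. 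Once the interior constraints are read off correctly from the figure $H$, the argument reduces to a short, finite verification.
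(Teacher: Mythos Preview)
Your proposal is not a proof---it is a description of a strategy you \emph{might} follow, explicitly hedged by ``since I cannot see the exact figures.'' That alone is a genuine gap: the lemma is about one specific cover $\H$ (the ``bad'' cover in Fig.~\ref{fig: GadgetCoverNot3Def3Corr}), and its proof is nothing more than reading that cover and chasing forced colors. There is no family of matchings to quantify over, no ``exceptional matching assignment'' to rule out, and no need for an averaging or counting argument.

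Two concrete misunderstandings make your plan heavier than it needs to be. First, in $T$ the vertices $u$ and $v$ each have degree~$1$, so $\defect(u),\defect(v)\in\{0,1\}$; your proposed case split over $\defect(u)\in\{0,1,2,3\}$ and the phrase ``both in $\{0,2,3\}$'' are vacuous. The statement ``$\defect(u)=1$ or $\defect(v)=1$'' is therefore equivalent to ``not both defects are $0$,'' and the whole lemma reduces to a contradiction from $\defect(u)=\defect(v)=0$. Second, once you assume $\defect(u)=\defect(v)=0$, the bad cover immediately forces $\phi(z_i)=2$ for every $i$ and $\phi(x_j),\phi(y_j)\in\{1,3\}$ for every $j$; a three-line check on the colors of $x_2,y_2,x_3,y_3$ then shows some $z_i$ acquires at least four defects. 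That is the entire argument in the paper---no symmetry reduction, no propagation through a black-box interior, just a direct verification on the explicit cover.
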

\begin{proof}
    Suppose $\phi$ is a $3$-def $\H$-coloring of $T$ such that $\defect(u) = \defect(v) = 0$. Then clearly $\phi(u) = \alpha$, $\phi(v) = \beta$, $\phi(z_i) = 2$ for all $1 \leq i \leq 5$, $\phi(x_j) \neq 2$ and $\phi(y_j) \neq 2$ for all $1 \leq j \leq 4$. If $\phi(x_2)= \phi(y_2)  = 1$, then $\defect(z_2) \geq 4$, a contradiction. Similarly, if $\phi(x_2)= \phi(y_2)  = 3$, then $\defect(z_3) \geq 4$, a contradiction. Thus it follows $x_2$ and $y_2$ are not the same color; a similar argument shows $x_3$ and $y_3$ are not the same color. But then it is easy to see that $z_3$ will conflict with at least two of $x_2$, $y_2$, $x_3$, $y_3$, and thus $\defect(z_3) \geq 4$, a contradiction.
    
\end{proof}

We are now ready to prove Theorem \ref{Theorem: not 3 def 3 corr}.

\begin{theorem*}[\ref{Theorem: not 3 def 3 corr}]
    There exists a planar graph that is not $3$-defective $3$-correspondable.
\end{theorem*}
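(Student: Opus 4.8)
The plan is to build a planar graph $G$ by taking many copies of the gadget $T$ and wiring them together so that Lemma~\ref{lemma: bad cover for 3def3corr} can be applied repeatedly to force an escalating defect. The key feature of the lemma is that a bad cover for $(\alpha,\beta)$ on $T$ guarantees that at least one of the two ``terminals'' $u,v$ has defect exactly $1$ in \emph{every} $3$-defective $\H$-coloring. The natural strategy is to regard $u$ and $v$ as input/output terminals of a ``defect-forcing'' gadget: whenever we attach such a gadget along the edge $uv$ (or identify $u,v$ with vertices of a larger construction), we are guaranteed that at least one endpoint accrues a unit of defect coming from \emph{inside} the gadget. If we can arrange for these forced internal defects to accumulate on a single target vertex — or on each vertex of a small set in a way that cannot be avoided — then we push the defect of some vertex above $3$, contradicting $3$-defectiveness.

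Concretely, I would first fix the bad cover $\H$ on a single copy of $T$ and verify planarity of $T$ together with the terminal edge $uv$, so that many copies can be glued in a planar fashion. Then I would design a central vertex (or central structure) $w$ and attach several copies $T_1,\dots,T_m$ of the gadget so that $w$ plays the role of a terminal ($u$ or $v$) in each copy simultaneously, while the covers are chosen bad for appropriately matched pairs $(\alpha_i,\beta_i)$. The point is to force, via Lemma~\ref{lemma: bad cover for 3def3corr}, that in each gadget either $w$ itself or its partner terminal receives defect $1$; by choosing the matchings so that the ``escape route'' (the partner getting the defect instead of $w$) is blocked — e.g.\ by making the partner terminal shared among gadgets or constrained by its own list — I would funnel enough forced defects onto $w$ to exceed $3$. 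The arithmetic is simply that $w$ can tolerate at most $3$ conflicting neighbors, so attaching four gadgets that each must contribute a conflict at $w$ yields $\defect(w)\ge 4$.

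The main obstacle, and the step that needs the most care, is the ``funneling'': Lemma~\ref{lemma: bad cover for 3def3corr} only gives an \emph{either/or} — the forced defect may land on $v$ rather than $u$ — so naively attaching gadgets at a common vertex does not immediately accumulate defects there, since a coloring could always choose to dump each unit of defect on the far terminal. The construction must therefore remove this choice, presumably by pairing up gadgets so that each terminal is a $u$-terminal of one gadget and a $v$-terminal of another, creating a cyclic or tree-like dependency in which no consistent assignment of ``which endpoint takes the defect'' avoids overloading some vertex. This is a covering/parity argument: one sets up $n$ terminals and $2n$ (or more) gadget-constraints of the form ``at least one of this pair is defective,'' and argues by counting that some terminal must satisfy $\ge 4$ of its incident constraints. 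I would make this rigorous by a double-counting or pigeonhole step once the gadget graph's incidence structure is fixed.

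Finally, I would assemble these pieces into a single planar graph $G$, specify the global correspondence cover $\H_G$ that restricts to a bad cover on each embedded copy of $T$, and conclude: any $3$-defective $\H_G$-coloring would, by Lemma~\ref{lemma: bad cover for 3def3corr} applied to each copy, force a forbidden defect of at least $4$ on some vertex, a contradiction. Hence $G$ admits no $3$-defective $\H_G$-coloring, so $G$ is not $3$-def $3$-correspondable, which is exactly Theorem~\ref{Theorem: not 3 def 3 corr}. The only genuinely delicate verifications are planarity of the glued construction and the combinatorial certainty of the funneling step; the coloring contradiction itself is then immediate from the lemma.
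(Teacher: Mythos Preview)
Your general direction is right—glue copies of $T$ at the terminals and use Lemma~\ref{lemma: bad cover for 3def3corr} plus pigeonhole—but you are overcomplicating the ``funneling'' step and, in doing so, you skate past the one point that actually needs care.

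The paper's construction is much simpler than your cyclic/tree-like dependency idea: identify \emph{both} terminals $u$ and $v$ across all copies of $T$ simultaneously. This is still planar (all copies can be drawn in parallel between the two shared vertices), and it dissolves your either/or obstacle immediately: if $k$ copies each force a defect on $u$ or on $v$, then by pigeonhole one of the two shared vertices picks up at least $\lceil k/2\rceil$ defects. No parity argument, no $n$-terminal incidence structure.

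The issue you do not squarely address is that a copy of $T$ is only bad for a \emph{specific} pair $(\alpha,\beta)$, and you do not know $(\phi(u),\phi(v))$ in advance. Your phrase ``the covers are chosen bad for appropriately matched pairs $(\alpha_i,\beta_i)$'' is too vague here. The paper's fix is direct: for each of the $9$ pairs $(\alpha,\beta)\in L(u)\times L(v)$, include $7$ copies of $T$ whose cover is bad for that pair, for $63$ copies in total. Then whatever colors $\phi(u),\phi(v)$ receive, exactly $7$ copies are bad for that pair, and pigeonhole gives $\defect(u)\ge 4$ or $\defect(v)\ge 4$. Your proposal would become a complete proof once you replace the speculative multi-terminal dependency with this two-terminal, all-pairs construction.
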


\begin{proof}
Let $G$ consist of 63 copies of $T$ with all the copies of $x$ identified together as a single vertex and all the copies of $y$ identified together as a single vertex. Note $G$ is planar.
Construct a cover $\H = (L, H)$ for $G$ as follows: let $L(v) = \{1, 2, 3\}$ for every $v \in V(G)$. Now for each of the 9 pairs $(\alpha, \beta) \in L(x) \times L(y)$, let 7 copies of $T$ be bad for $(\alpha, \beta)$. 

Now suppose $\phi$ is a $3$-def $\H$-coloring of $G$. By construction, there are 7 copies of $T$ that are bad for $(\phi(x), \phi(y))$. By Lemma \ref{lemma: bad cover for 3def3corr}, each bad copy of $T$ provides one defect to either $x$ or $y$, and thus $\defect(x) \geq 4$ or $\defect(y) \geq 4$; this contradicts $\phi$ is a $3$-defective $\H$-coloring. Thus $G$ is a planar graph that is not $3$-defective $3$-correspondable. Furthermore, it is easy to see that $G$ is $3$-degenerate, and thus it is $4$-correspondable.
\end{proof}


\section{Outerplanar graphs}\label{section: outerplanar}
In this section, we prove Theorem \ref{theorem: outerplanar graph is 3def 2corr}. To do so, we prove a slightly stronger theorem below. Recall an outerplanar graph is a \textit{near triangulation} if every interior face is a triangle.
\begin{theorem}\label{Theorem: outerplanar}
If $G$ is an outer planar near triangulation with outer cycle $C$, $u,v \in V(C)$ with $uv \in E(C)$, $\H$ is a 2-fold correspondence cover for $G$, and $\phi$ is a partial $3$-def $\H$-coloring of $G$ with domain $u, v$, then $\phi$ can be extended to a $3$-def $\H$-coloring of $G$ such that:
\begin{enumerate}
    \item $\defect(u) \leq 1$ and $\defect(v) \leq 2$ if $\phi(u) \sim \phi(v)$,
    \item $\defect(u) \leq 0$ and $\defect(v) \leq 1$ if $\phi(u) \not\sim \phi(v)$.
\end{enumerate}
\end{theorem}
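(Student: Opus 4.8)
The plan is to prove the statement by induction on $|V(G)|$, after first unifying the two cases. Set $\delta = 1$ if $\phi(u) \sim \phi(v)$ and $\delta = 0$ otherwise; then conclusions (1) and (2) together assert exactly that the extension can be chosen with $\defect(u) \leq \delta$ and $\defect(v) \leq \delta + 1$. Informally, $u$ is to acquire no conflict other than (possibly) the one with $v$, while $v$ is allowed one conflict beyond $u$. Having an outer cycle $C$ bounding the outer face together with the near‑triangulation hypothesis makes $G$ a triangulated polygon with $V(G) = V(C)$ and $uv$ a boundary edge, which is the structure I will exploit; by Remark~\ref{remark: simplify cover} I may also assume $|L(z)| = 2$ for every $z$. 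The base case is $|V(G)| = 2$ (the single edge $uv$), where there is nothing to color and $\defect(u) = \defect(v) = \delta$ as required.

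For the inductive step, let $w$ be the third vertex of the unique interior triangle incident to the boundary edge $uv$, and color $w$ first: since $|L(w)| = 2$ and the matching $M_{uw}$ forbids at most one color, I can pick $\phi(w) \in L(w)$ with $\phi(w) \notsim \phi(u)$ — this choice is what will protect $u$. The two chords $uw$ and $vw$ cut $G$ into the triangle $uvw$ and two strictly smaller triangulated polygons: $G_1$, on the vertices running from $v$ to $w$ along $C$ not through $u$, with $vw$ as a boundary edge; and $G_2$, on the vertices running from $w$ to $u$ along $C$ not through $v$, with $uw$ as a boundary edge. The crucial geometric point is that outerplanarity forbids any chord crossing $uw$ or $vw$, so $G_1$ and $G_2$ meet only in $w$, every neighbor of $u$ other than $v$ lies in $G_2$, and every neighbor of $v$ other than $u$ lies in $G_1$. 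In particular every edge of $G$ lies in $G_1$, lies in $G_2$, or is the edge $uv$ itself.

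Now I apply the induction hypothesis twice, with the roles assigned so that the asymmetric bounds propagate. On $G_2$ I take the precolored edge $uw$ with $u$ in the protected role; since $\phi(w) \notsim \phi(u)$, conclusion (2) gives $\defect_{G_2}(u) = 0$ and $\defect_{G_2}(w) \leq 1$. On $G_1$ I take the precolored edge $vw$ but with $v$ in the protected role (this is the key choice), so the unified bound yields $\defect_{G_1}(v) \leq 1$ and $\defect_{G_1}(w) \leq 2$. Since the only conflicts of $u$ are the edge $uv$, contributing $\delta$, together with its conflicts inside $G_2$, I get $\defect_G(u) = \delta + \defect_{G_2}(u) = \delta$; similarly $\defect_G(v) = \delta + \defect_{G_1}(v) \leq \delta + 1$. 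Because every edge at $w$ lies in $G_1$ or in $G_2$ and these share only the vertex $w$, the conflicts of $w$ add without overlap, giving $\defect_G(w) = \defect_{G_1}(w) + \defect_{G_2}(w) \leq 2 + 1 = 3$; and every remaining vertex lies inside a single $G_i$, so it retains defect at most $3$ by induction. This produces a $3$-def extension meeting the required bounds.

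The main obstacle is the bookkeeping of roles rather than any hard estimate. One must place $v$ — not $w$ — in the protected role inside $G_1$: otherwise $v$ could inherit two extra conflicts and overshoot $\delta + 1$. Symmetrically, protecting $u$ inside $G_2$ depends on the earlier choice $\phi(w) \notsim \phi(u)$, and that same choice is precisely what buys the one unit of slack on the $G_2$-side needed to keep $\defect_G(w) \leq 3$. The remaining care is geometric and degenerate-case handling: one should justify that the chords $uw, vw$ genuinely separate $G$ into noninteracting subproblems (using the no-crossing property of outerplanar graphs, so that defects add cleanly), and check the boundary situations in which $G_1$ or $G_2$ collapses to the single edge $vw$ or $uw$, which are exactly the base case of the induction.
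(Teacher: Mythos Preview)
Your proposal is correct and is essentially the same argument as the paper's: both split $G$ at a common neighbour of $u$ and $v$, colour that vertex to avoid $\phi(u)$, and then apply induction on the two resulting subpolygons with the roles assigned so that $u$ is protected on one side and $v$ on the other. Your ``apex of the triangle on $uv$'' is exactly the paper's vertex $z$ (the farthest neighbour of $u$ along $C$, which the paper then shows is adjacent to $v$); your presentation is a bit cleaner in that taking the $2$-vertex edge as the base case lets you avoid the paper's separate $\deg(u)=2$ case.
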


\begin{proof}
    We proceed by induction on the number of vertices of $G$. The base case of $G$ having 3 vertices is trivial. 
    So assume $|V(G)| \geq 4$. We split into cases based on $\deg(u)$.
    
    Case 1: $\deg(u) = 2$.
    Let $x$ be the neighbor of $u$ not equal to $v$. Then, as $G$ is a near triangulation, it follows $xv \in E(G)$. 
    Let $\psi$ be a partial $\H$-coloring with domain $\{x,v\}$ by taking $\psi(v) \defeq \phi(v)$ and $\psi(x)$ to be the color in $L(x)$ that doesn't conflict with $\phi(u)$. 
    By the induction hypothesis applied to $G\setminus \{u\}$ and $\psi$, we can extend $\psi$ to a $3$-def $\H$-coloring of $G\setminus \{u\}$ such that $\defect_{\psi}(x) \leq 2$ and $\defect_{\psi}(v)  \leq 1$. 
    Now extend $\psi$ by taking $\psi(u) \defeq \phi(u)$. If $\phi(u) \sim \phi(v)$, then $\defect_\psi(u) \leq 1$ and $\defect_\psi(v) \leq 2$; otherwise, $\defect_\psi(u) = 0$ and $\defect_\psi(v) \leq 1$. Thus $\psi$ is the desired extension of $\phi$, completing the induction.
    
    Case 2: $\deg(u) > 2$. By symmetry, we may assume $\deg(v) > 2$, as otherwise we can apply case 1 to $v$. 
    Starting at $v$ and preceding counter-clockwise along $C$, assume the vertices along $C$ are $v, u, x_1, x_2, \ldots, x_k, v$ for some $k \in \N$. Since $\deg(u), \deg(v) \geq 3$, and as $G$ is a near-triangulation, it follows $u$ has a neighbor $x_i$ for some $i$; let $i^*$ be the largest $i$ such that $u$ is adjacent to $x_{i^*}$, and set $z \defeq x_{i^*}$. Since $u$ is not adjacent to $x_{i^*+1}$, and as $G$ is a near triangulation, it follows $v \sim z$. Let $G_1$ be the graph induced by $u, x_1, \ldots, z$, and $G_2$ be the graph induced by $v, x_k, x_{k-1}, \ldots, z$. Take $\psi_1$ to be a partial $\H$-coloring with domain $\{u, z\}$ by taking $\psi_1(u) \defeq \phi(u)$ and $\psi_1(z)$ to be the color in $L(z)$ that doesn't conflict with $\phi(u)$. Define $\psi_2$ to be a partial $\H$-coloring with domain $\{v,z\}$ with $\psi_2(v) \defeq \phi(v)$ and $\psi_2(z) \defeq \psi_1(z)$.
    By the induction hypothesis, $\psi_1$ can be extended to a $3$-def $\H$-coloring of $G_1$ with $\defect_{\psi_1}(u) \leq 0$ and $\defect_{\psi_1}(z) \leq 1$. Similarly, by the induction hypothesis, $\phi_2$ can be extended to a $3$-def $\H$-coloring of $G_2$ with $\psi_2(v) \leq 1$ and $\psi_2(z) \leq 2$. It follows $\psi_1 \cup \psi_2$ is a $3$-def $\H$-coloring of $G$ with the desired properties.
\end{proof}

We now prove Theorem \ref{theorem: outerplanar graph is 3def 2corr}.

\begin{theorem*}[\ref{theorem: outerplanar graph is 3def 2corr}]
    All outerplanar graphs are $3$-def $2$-correspondable. Furthermore, 3 defects is best possible.
\end{theorem*}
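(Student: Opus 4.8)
The plan is to prove Theorem~\ref{theorem: outerplanar graph is 3def 2corr} in two parts, deriving the positive direction directly from the stronger Theorem~\ref{Theorem: outerplanar} and then exhibiting a cover witnessing that $3$ defects cannot be lowered to $2$.

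For the positive direction, first I would reduce to the connected case, since a $3$-def $\H$-coloring can be built independently on each component. Next I would reduce to the near-triangulation case via Remark~\ref{remark: simplify cover}: given any outerplanar $G$ with a $2$-fold cover $\H$, I would embed $G$ in the plane with all vertices on the outer face, add chords to make every bounded face a triangle (extending $\H$ by assigning empty matchings to the new edges), and observe that a $3$-def coloring of this larger near-triangulation restricts to a $3$-def coloring of $G$, because adding edges only increases defects. (I must handle the degenerate cases where $G$ is a single vertex, a single edge, or disconnected, where no outer cycle with a suitable edge $uv$ exists; these are trivial or handled componentwise.) Then for a connected near triangulation on at least $3$ vertices, I would pick any edge $uv$ on the outer cycle $C$, choose arbitrary colors $\phi(u) \in L(u)$ and $\phi(v) \in L(v)$, and apply Theorem~\ref{Theorem: outerplanar} to extend this partial coloring to a full $3$-def $\H$-coloring of $G$. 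This gives that every outerplanar graph is $3$-def $2$-correspondable.

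For the sharpness claim, I would construct a specific outerplanar graph $G$ together with a $2$-fold correspondence cover $\H$ admitting no $2$-def $\H$-coloring. The natural candidate is a fan or a book of triangles: take a central vertex $w$ joined to a path $p_1 p_2 \cdots p_m$, so that each $w p_i p_{i+1}$ is a triangle (this is outerplanar). With $2$-fold lists $\{1,2\}$ everywhere and matchings chosen so that consecutive path vertices are forced into alternating conflict patterns relative to $w$, I would arrange that any choice of $\phi(w)$ forces at least three of the $p_i$ to conflict with $w$, giving $\defect(w) \ge 3 > 2$. I expect to tune the matchings so that, whichever color $w$ takes and however the path is colored under a $2$-def constraint, the number of path-neighbors agreeing with $w$ exceeds $2$; since the paper already records (from Cowen--Cowen--Woodall) that outerplanar graphs need not be $1$-def $2$-colorable, a cover forcing defect $3$ should be comparable in spirit and entirely explicit.

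The main obstacle I anticipate is the sharpness construction rather than the positive direction. For ordinary defective coloring one wants a graph forcing a vertex to have many same-colored neighbors; in the correspondence setting the adversary gets to choose the matchings, which is more powerful, but the requirement is dually harder because we must rule out \emph{every} $2$-def coloring, and with only two colors per vertex the coloring space is large and the conflict structure is flexible. The delicate point will be verifying that no clever global $2$-coloring of the path avoids creating three conflicts at $w$ while simultaneously keeping each $p_i$'s own defect at most $2$; I would make this rigorous by a short case analysis on $\phi(w)$ and a counting argument over the induced conflicts along the path, possibly choosing the matchings so that each triangle $w p_i p_{i+1}$ behaves like an odd-cycle obstruction that cannot be properly resolved.
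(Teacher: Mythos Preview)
Your positive direction is correct and essentially identical to the paper's: reduce to connected near triangulations, pick an edge $uv$ on the outer cycle, precolor $u,v$ arbitrarily, and invoke Theorem~\ref{Theorem: outerplanar}.

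The sharpness construction, however, has a real gap. A plain fan (a center $w$ joined to a path $p_1\cdots p_m$) is always $2$-def $2$-correspondable, regardless of the matchings. Indeed, once you fix $\phi(w)$, each $p_i$ has a unique color in its $2$-element list that avoids the conflict with $w$; assign that color to every $p_i$. Then $\defect(w)=0$, and each internal $p_i$ has only the two path neighbors $p_{i-1},p_{i+1}$ left to conflict with, so $\defect(p_i)\le 2$. Hence no choice of matchings on a fan can force defect $3$, and your plan of making $\defect(w)\ge 3$ cannot succeed.

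The paper's construction supplies the missing idea: it starts from a fan on $u,z_1,\dots,z_{12}$ but then attaches, for each $i$, an extra vertex $y_i$ adjacent to $z_i$ and $z_{i+1}$, and uses a \emph{twisted} matching on the edge $y_iz_{i+1}$ (colors $1\leftrightarrow 2$) while all other matchings are the identity. The argument then targets a \emph{path vertex}, not the center: in any $2$-def coloring at most two $z_i$ conflict with $u$, so some four consecutive $z_j,\dots,z_{j+3}$ are all colored the same (say $1$); then $z_{j+1}$ already has two conflicts from $z_j,z_{j+2}$, and whichever color $y_{j+1}$ receives, the twist guarantees it conflicts with one of $z_{j+1},z_{j+2}$, pushing that vertex to defect $3$. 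The pendant triangles plus the single twisted matching are exactly what your proposal is missing.
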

\begin{proof}
As graphs with fewer than 3 vertices are clearly $3$-def $2$-correspondable, we focus on outerplanar graphs with at least 3 vertices.
Since edges can always be added to an outerplanar graph with at least 3 vertices to make it a near triangulation, it follows from Theorem \ref{Theorem: outerplanar} that such outerplanar graphs are $3$-def $2$-correspondable (to see this, first choose any edge $uv$ on the outerface. Then choose a partial $\mathcal{H}$-coloring $\phi$ by selecting values for $\phi(u)$ and $\phi(v)$. By Theorem \ref{Theorem: outerplanar}, $\phi$ can be extended to all of $G$.)

We now prove that 3 defects is best possible.

\begin{figure}[h!]
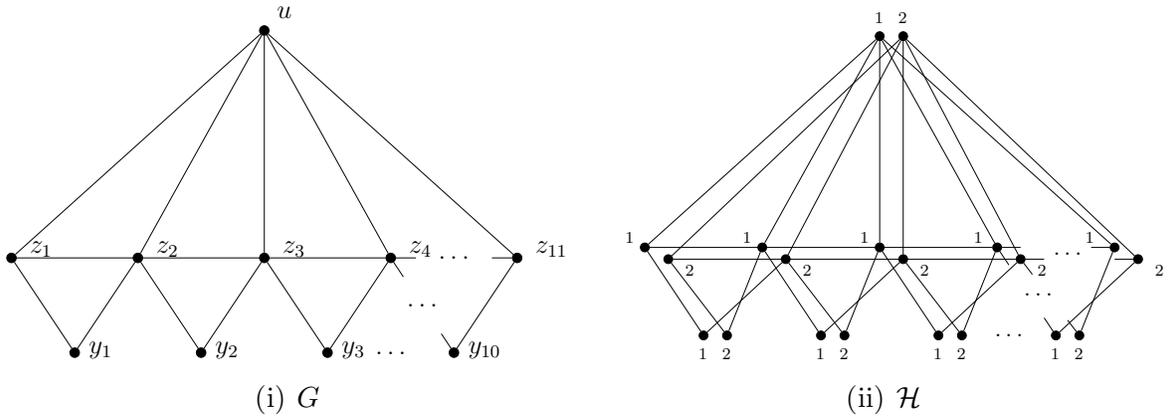

    \centering
    \begin{subfigure}[t]{.48\textwidth}
     \centering
    \includegraphics[width=1\textwidth]{figures/Figure_OuterplanarTfor2Def2Col.tex}
    \caption{$G$}
    \label{fig: outerplanar T}
\end{subfigure}%
\begin{subfigure}[t]{.48\textwidth}
   \centering
    \includegraphics[width=1\textwidth]{figures/Figure_OuterplanarCover2Def2Corr.tex}
    \caption{$\H$}
    \label{fig: outerplanar H}
\end{subfigure}
\caption{$G$ and correspondence cover $\H$ in proof of Theorem \ref{theorem: outerplanar graph is 3def 2corr}.}
\end{figure}



    Consider the fan of length 12, i.e., a path $z_1, z_2, \ldots, z_{12}$ with a vertex $u$ adjacent to each $z_i$. Now for each edge $z_iz_{i+1}$, $i \in [11]$, add a vertex $y_i$ adjacent to $z_i$ and $z_{i+1}$. Call the resulting graph $G$ (see Fig \ref{fig: outerplanar T}). Clearly $G$ is outerplanar.
    
    Let $\H$ be a correspondence cover for $T$ as follows: for each $i \in [11]$, let $M_{y_iz_{i+1}}$ be the matching with 1 corresponding to 2 and vice-versa. Let all other matchings be the identity matching (see Figure \ref{fig: outerplanar H}).

    Assume for contradiction that $\phi$ is a $2$-def $\H$-coloring of $G$. Then there are at most two indices, say $i_1$ and $i_2$, such that $\phi(u) \sim  \phi(z_{i_1}) \sim \phi(z_{i_2})$. Thus there exists $j \in [9]$ such that $z_{j}$, $z_{j+1}$, $z_{j+2}$, $z_{j+3}$ all receive different colors from $u$, and thus are all colored the same. For ease of notation, we will assume without loss of generality that $j = 1$ and $z_1, \ldots, z_4$ are all colored 1. If $\phi(y_2) = 1$, then $\defect(z_2) = 3$, a contradiction. If $\phi(y_2) = 2$, then $\defect(z_3) = 3$, again a contradiction. Thus such a $\phi$ cannot exist. This completes the proof.


\end{proof}

\section{Proof of Theorem \ref{theorem: 1d3corr but not 4corr}}\label{section: proof of 1def3corr but not 4corr}
In this section, we prove Theorem \ref{theorem: 1d3corr but not 4corr}, modulo some technical lemmas whose proofs we postpone to later sections.

By Remark \ref{remark: simplify cover}, we will assume for all $k$-fold correspondence covers in this section, all lists have size exactly $k$ and all matchings are maximal. We introduce some notation. Let $R$ be the graph isomorphic to $K_4$ minus an edge given by $V(R) =  \{a, b, c, d\}$ and all edges except $ac$ (see Figure \ref{Fig: R}). Let $T$ be the graph displayed in Figure \ref{Fig: T}. Let $R_1$ be the subgraph $T[x, u_1, v_1, z]$, and $R_2$ be the subgraph $T[z, u_2, v_2, y]$.

\begin{figure}[h!]
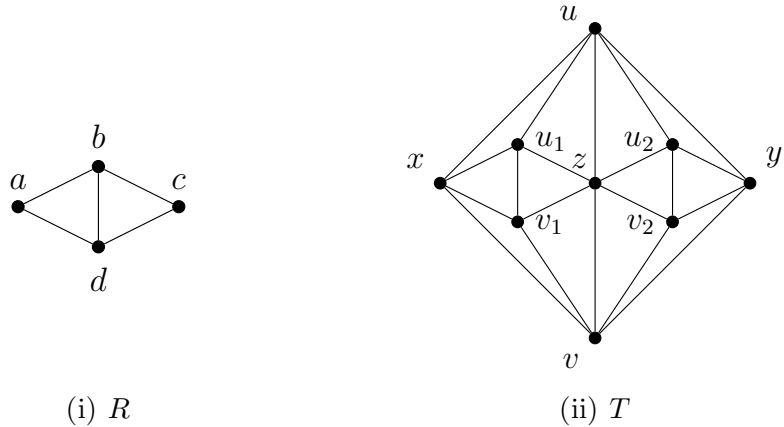

\centering
\begin{subfigure}[t]{.4\textwidth}
     \centering
    \includegraphics[width=.4\textwidth]{figures/Figure_R.tex}
    \caption{$R$}
    \label{Fig: R}
\end{subfigure}%
\begin{subfigure}[t]{.4\textwidth}
   \centering
    \includegraphics[width=.8\textwidth]{figures/Figure_T.tex}
    \caption{$T$}
    \label{Fig: T}
\end{subfigure}
\caption{Graphs $R$ and $T$.}
\end{figure}

\subsection{Twisted and Wedged}
\begin{definition}\label{def: twisted and wedged}
    Let $\mathcal{H} = (L, H)$ be a correspondence cover for $R$ with $L(a) = \{a_1\}$, $L(b) = \{b_1, b_2\}$, $L(c) = \{c_1\}$, $L(d) = \{d_1, d_2\}$. Then we say $\mathcal{H}$ is
    \begin{enumerate}
        \item \emphdef{twisted} if $a_1b_1d_2c_1b_2d_1a_1$ is a cycle in $H$,
        \item \emphdef{wedged} if $a_1b_1$, $a_1d_1$, and at least two of $\{b_2d_2, c_1b_2, c_1d_2\}$ are edges in $H$.
    \end{enumerate}
    If $G$ is a graph isomorphic to $R$ and $\H'$ is a correspondence cover for $G$, then we say $\H'$ is \emphdef{twisted} (respectively, \emphdef{wedged}) if $\H'\cong \H$ and $\H$ is twisted (respectively, wedged). Note that if $\H$ is twisted, then $\H$ is wedged. (See Figure \ref{Fig: Twisted} and Figure \ref{Fig: Wedged}.)
\end{definition}

\begin{figure}[h]
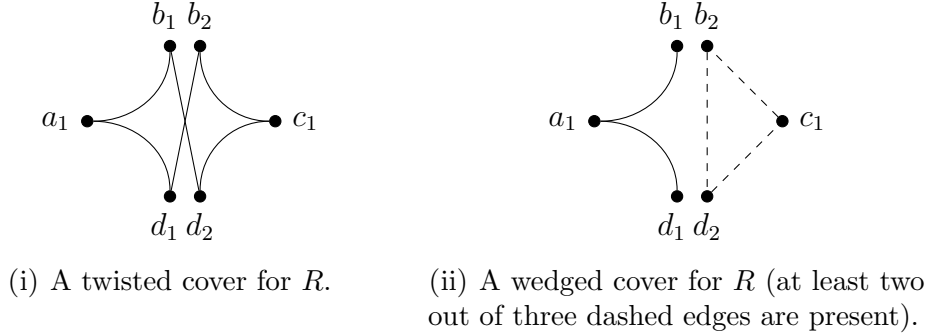

\centering
\begin{subfigure}[t]{.4\textwidth}
     \centering
    \includegraphics[scale = 1]{figures/Figure_Twisted.tex}
    \caption{A twisted cover for $R$.}
    \label{Fig: Twisted}
\end{subfigure}%
~
\begin{subfigure}[t]{.4\textwidth}
   \centering
    \includegraphics[scale = 1]{figures/Figure_Wedged.tex}
    \caption{A wedged cover for $R$ (at least two out of three dashed edges are present).}
    \label{Fig: Wedged}
\end{subfigure}
\caption{Twisted and wedged covers for $R$.}
\end{figure}

\begin{lemma}\label{Lemma: 1-1 not twisted or wedged}
    Let $\H$ be a correspondence cover for $R$ with $\ell(a) = \ell(c) = 1$ and $\ell(b) = \ell(d) = 2$.
    If $\mathcal{H}$ is not twisted, then there exists a $1$-def $\mathcal{H}$-coloring $\phi$. Furthermore, if $H$ is not wedged, then we may take $\phi$ such that $\defect_\phi(a) = 0$.
\end{lemma}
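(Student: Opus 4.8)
The plan is to reduce the lemma to a finite check by exploiting the tiny list sizes, after a normalization of the cover.

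\textbf{Normalization.} By Remark \ref{remark: simplify cover} I assume every matching is maximal. Since $\ell(a)=\ell(c)=1$ and $\ell(b)=\ell(d)=2$, maximality forces $a_1$ to conflict with exactly one color of $L(b)$ and exactly one color of $L(d)$; likewise $c_1$ conflicts with exactly one of each; and $M_{bd}$ is a perfect matching of $\{b_1,b_2\}$ with $\{d_1,d_2\}$, hence either \emph{straight} ($b_i\sim d_i$) or \emph{crossed} ($b_i\sim d_{3-i}$). Relabeling the colors of $L(b)$ and of $L(d)$ — an isomorphism of covers — I may assume $a_1\sim b_1$ and $a_1\sim d_1$, so that $b_2,d_2$ are the unique colors \emph{not} conflicting with $a_1$. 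All remaining data are then encoded by three quantities $e_1\defeq[c_1\sim b_2]$, $e_2\defeq[c_1\sim d_2]$, $e_3\defeq[b_2\sim d_2]$, each taken to be $1$ if the displayed conflict holds and $0$ otherwise (note $e_3=1$ iff $M_{bd}$ is straight).

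\textbf{Translating the hypotheses.} A degree count shows that in these labels $H$ (restricted to the six colors) is a single $6$-cycle exactly when $e_1=e_2=1$ and $e_3=0$; since being a $6$-cycle is preserved by isomorphism, this is precisely the twisted case, which legitimizes the normalization despite the automorphisms $a\leftrightarrow c$, $b\leftrightarrow d$ of $R$. Reading off the three edges $b_2d_2$, $c_1b_2$, $c_1d_2$ named in Definition \ref{def: twisted and wedged}, I see that if at least two of $e_1,e_2,e_3$ equal $1$ then $\H$ is (literally) wedged; only this direction will be needed.

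\textbf{The non-wedged clause.} Consider the coloring $\phi_0$ with $\phi_0(b)=b_2$ and $\phi_0(d)=d_2$; by construction $\defect_{\phi_0}(a)=0$. A direct computation gives $\defect(c)=e_1+e_2$, $\defect(b)=e_1+e_3$, and $\defect(d)=e_2+e_3$, so that each unordered pair from $\{e_1,e_2,e_3\}$ governs exactly one of these defects. Hence all three are $\leq 1$ if and only if at most one $e_i$ equals $1$. If $\H$ is not wedged then, by the previous paragraph, at most one $e_i$ equals $1$, so $\phi_0$ is a $1$-defective $\H$-coloring with $\defect(a)=0$; this proves the ``furthermore''.

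\textbf{The main statement.} Assume $\H$ is not twisted. If at most one $e_i$ equals $1$, then $\phi_0$ works by the computation above. Otherwise at least two of $e_1,e_2,e_3$ equal $1$; if $e_3=0$ then necessarily $e_1=e_2=1$, which is the twisted cover, excluded by hypothesis. Thus $e_3=1$ (straight) and at least one of $e_1,e_2$ equals $1$. If $e_1=1$, I claim $b\mapsto b_1$, $d\mapsto d_2$ is $1$-defective; if instead $e_1=0$ (so $e_2=1$), then $b\mapsto b_2$, $d\mapsto d_1$ is. Each is a one-line check of the four defects (for the first, $\defect(a)=\defect(b)=1$ and $\defect(c)=\defect(d)=e_2\leq 1$), exhausting all non-twisted covers. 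The defect arithmetic is entirely routine; the only step requiring genuine care is the reformulation of ``twisted'' as ``$H$ is a $6$-cycle'', which is what makes the normalization an honest reduction.
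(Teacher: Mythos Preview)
Your proof is correct and follows essentially the same approach as the paper's: normalize so that $a_1\sim b_1$ and $a_1\sim d_1$, then carry out a finite case check on the remaining data. The only difference is cosmetic --- you encode the residual cover by the three bits $e_1,e_2,e_3$ and compute defects of a single default coloring in those terms, whereas the paper splits directly on whether $a_1$ and $c_1$ share a neighbor (equivalently, whether $e_1=0$ or $e_2=0$) and handles the twisted/wedged subcases by inspection; the colorings produced coincide.
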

\begin{proof}
By applying an appropriate isomorphism to $\H$, we may assume without loss of generality that $\H = (L, H)$ is such that $L(a) = \{a_1\}$, $L(b) = \{b_1, b_2\}$, $L(c) = \{c_1\}$, $L(d) = \{d_1, d_2\}$. By Remark \ref{remark: simplify cover}, all matchings are maximal, and thus we may assume by relabeling colors if necessary that $a_1 \sim b_1$ and $a_1 \sim d_1$. Set $\phi(a) = a_1$ and $\phi(c) = c_1$.

Case 1: $a_1$ and $c_1$ share a common neighbor. 
Without loss of generality, assume the common neighbor is $b_1$. Set $\phi(b) = b_2$. 
If $d_2 \notsim b_2$, or $d_2 \notsim c_1$, then set $\phi(d) = d_2$ and observe $\phi$ is a $1$-defective $\H$-coloring with $\defect_\phi(a) = 0$.
Otherwise, $d_2 \sim b_2$ and $d_2 \sim c_1$, and thus $\H$ is wedged. By the matching condition it follows $d_1 \notsim b_2$ and $d_1 \notsim c_1$. Thus setting $\phi(d) = d_1$ results in a $1$-defective $\H$-coloring of $R$.

Case 2: $a_1$ and $c_1$ do not share a common neighbor. Then $c_1 \sim b_2$, and $c_1 \sim d_2$. Note that $\H$ is wedged. If $b_1 \sim d_2$ and $d_1 \sim b_2$, then $\H$ is twisted, thus we may assume instead that $b_1 \sim d_1$ and $b_2 \sim d_2$. Then setting $\phi(b) = b_1$, $\phi(d) = d_2$ yields a $1$-defective $\H$-coloring of $R$.
\end{proof}

\begin{definition}\label{wedged 2}
    Let $\mathcal{H} = (L, H)$ be a correspondence cover for $R$ with $L(a) = \{a_1\}$, $L(b) = \{b_1, b_2\}$, $L(c) = \{c_1, c_2\}$, $L(d) = \{d_1, d_2\}$. Then we say $\mathcal{H}$ is \emphdef{wedged} if $b_1\sim a_1 \sim d_1$ and $c_1 \sim b_2 \sim d_2 \sim c_2$.
    If $G$ is a graph isomorphic to $R$ and $\H'$ is a correspondence cover for $G$, then we say $\H'$ is \emphdef{wedged} if $\H' \cong \H$. (See Figure \ref{Figure: wedged2})
\end{definition}

\begin{figure}[h!]
     \centering
    \includegraphics[scale = 1]{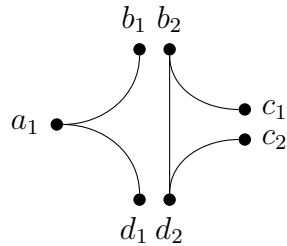}
    \caption{A wedged cover for $R$ with $|L(c)| = 2$.}
    \label{Figure: wedged2}
\end{figure}

Note that although we are defining \textit{wedged} in both Definition \ref{def: twisted and wedged} and Definition \ref{wedged 2}, the first is for covers $\H$ of $R$ with $|L(c)| = 1$, and the second for covers $\H$ of $R$ with $|L(c)| = 2$. It should be clear from context how the definition is being applied.

\begin{lemma}\label{Lemma: 1-2}
    Let $\H = (L, H)$ be a correspondence cover for $R$ with $\ell(a) = 1$ and $\ell(b) = \ell(c) = \ell(d) = 2$.
        \begin{enumerate}[label = {(\roman{*})}]
        \item\label{Lemma: 1-2 coloring} There exists a $1$-def $\H$-coloring $\phi$ such that $\defect_\phi(c) = 0$.
        \item\label{Lemma: 1-2 not wedged} If $\H$ is not wedged, then there exists a $1$-def $\H$-coloring $\phi$ such that $\defect_\phi(a) = 0$ (though $\defect_\phi(c)$ may equal 1).
            
        \end{enumerate} 
\end{lemma}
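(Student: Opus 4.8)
The plan is to normalise the cover and then treat each part by forcing the vertex whose defect must vanish and checking the remaining defects by a finite case analysis. By applying an isomorphism and Remark~\ref{remark: simplify cover}, I would assume $L(a)=\{a_1\}$, $L(b)=\{b_1,b_2\}$, $L(c)=\{c_1,c_2\}$, $L(d)=\{d_1,d_2\}$ with all matchings maximal. The key structural observation is that a maximal matching between two lists of size $2$ is perfect: if either side had an unmatched colour we could add an edge. Hence $M_{bc}$, $M_{bd}$, $M_{cd}$ are each perfect matchings of two edges, while $a_1$ is matched to exactly one colour of $L(b)$ and one of $L(d)$; relabelling, I take $a_1\sim b_1$ and $a_1\sim d_1$ (so $a_1\notsim b_2$, $a_1\notsim d_2$). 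In particular $M_{bd}$ is either the identity matching $\{b_1d_1,b_2d_2\}$ or the twisted matching $\{b_1d_2,b_2d_1\}$, and this dichotomy drives both arguments.

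For part~\ref{Lemma: 1-2 coloring}, since $c$ is adjacent only to $b$ and $d$ and both $M_{bc}$, $M_{cd}$ are perfect, each colour $c_j$ conflicts with exactly one colour of $L(b)$ and one of $L(d)$. Thus requiring $\defect_\phi(c)=0$ forces, for each of the two choices $\phi(c)=c_j$, the unique non-conflicting values of $\phi(b)$ and $\phi(d)$, yielding exactly two candidate colourings. I would then show that at least one of them keeps $\defect_\phi(a),\defect_\phi(b),\defect_\phi(d)\leq 1$. Writing these three defects in terms of the indicators $[\phi(b)=b_1]$, $[\phi(d)=d_1]$ (the conflicts with $a_1$) and $[\phi(b)\sim\phi(d)]$ (governed by the type of $M_{bd}$), the verification reduces to a short check over the possibilities for $M_{bc}$ and $M_{cd}$; the candidate placing $\phi(b)=b_2$, $\phi(d)=d_2$ is the one that makes $a$ happy and is the first to try.

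For part~\ref{Lemma: 1-2 not wedged}, demanding $\defect_\phi(a)=0$ forces $\phi(b)=b_2$ and $\phi(d)=d_2$, since these are the unique colours of $L(b)$, $L(d)$ not conflicting with $a_1$. With $\phi(b),\phi(d)$ fixed I am free to choose $\phi(c)$, and I would split on $M_{bd}$. If $M_{bd}$ is twisted then $b_2\notsim d_2$, so $\defect_\phi(b),\defect_\phi(d)\leq 1$ hold automatically and any convenient $\phi(c)$ gives $\defect_\phi(c)\leq 1$. If $M_{bd}$ is the identity then $b_2\sim d_2$ contributes one conflict to each of $b,d$, so I must pick $\phi(c)$ avoiding both $b_2$ and $d_2$; such a colour exists unless $b_2$ and $d_2$ are matched to different colours of $L(c)$. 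In that remaining case the three perfect matchings are pinned down to $\{b_1d_1,b_2d_2\}$, $\{b_2c_1,b_1c_2\}$, $\{d_2c_2,d_1c_1\}$ (after relabelling $L(c)$), which is precisely the wedged configuration of Definition~\ref{wedged 2}; so by contraposition a non-wedged cover always admits the desired colouring.

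The main obstacle is the case analysis coordinating the three perfect matchings $M_{bc}$, $M_{bd}$, $M_{cd}$, and in particular isolating exactly which combinations obstruct the two target defects. For part~\ref{Lemma: 1-2 not wedged} the crux is recognising that the unique obstruction to $\defect_\phi(a)=0$ is exactly the wedged configuration, after which the contrapositive is immediate. For part~\ref{Lemma: 1-2 coloring} the delicate case is a twisted $M_{bd}$: here the two colourings forced by $\defect_\phi(c)=0$ must be examined carefully against the chosen $M_{bc}$, $M_{cd}$, and pinning down why one of them always keeps every defect at most $1$ is the step I expect to require the most care.
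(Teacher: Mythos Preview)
Your treatment of part~(ii) is correct and aligns with the paper's: forcing $\phi(b)=b_2$, $\phi(d)=d_2$ and checking that the unique obstruction to completing a $1$-defective colouring is precisely the wedged configuration of Definition~\ref{wedged 2}. (A minor quibble: in the twisted-$M_{bd}$ subcase you need \emph{some} $\phi(c)$ with $\defect_\phi(c)\leq 1$, not \emph{any}; but this is immediate by pigeonhole, since $b_2$ and $d_2$ together conflict with at most two colours of $L(c)$.)

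Your plan for part~(i), however, has a genuine gap in exactly the case you flag as delicate. Take $M_{bd}$ twisted together with $M_{bc}=\{b_1c_1,\,b_2c_2\}$ and $M_{cd}=\{c_1d_2,\,c_2d_1\}$. The two candidates forced by $\defect_\phi(c)=0$ are then $(\phi(b),\phi(d))=(b_1,d_2)$ when $\phi(c)=c_2$, and $(b_2,d_1)$ when $\phi(c)=c_1$. In the first, $b_1$ conflicts with both $a_1$ and $d_2$, so $\defect_\phi(b)=2$; in the second, $d_1$ conflicts with both $a_1$ and $b_2$, so $\defect_\phi(d)=2$. Thus neither candidate is $1$-defective, and an exhaustive check of all eight colourings shows this cover admits no $1$-defective $\H$-colouring with $\defect_\phi(c)=0$ whatsoever. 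Your claim that ``at least one of them keeps every defect at most~$1$'' therefore fails here. The paper's own proof lands in this same subcase (its Case~2 with $b_1\sim d_2$, after the normalisation $d_1\sim c_2\sim b_2$) and asserts that the colouring $(a_1,b_1,c_2,d_2)$ is $1$-defective, which is the same error; as written, part~(i) appears to be false.
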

\begin{proof}
    Without loss of generality, we may assume $\H = (L, H)$ is such that $L(a) = \{a_1\}$, $L(b) = \{b_1, b_2\}$, $L(c) = \{c_1, c_2\}$, $L(d) = \{d_1, d_2\}$, $a_1 \sim b_1$ and $a_1 \sim d_1$. Set $\phi(a) = a_1$. 

    Case 1: $H$ contains the cycle $a_1b_1c_id_1a_1$ for some $i \in \{1, 2\}$. Without loss of generality, we may assume $i = 1$, and take $\phi(b) = b_2$, $\phi(c) = c_1$, and $\phi(d) = d_2$. 
    Then $\phi$ is a $1$-defective $\H$-coloring of $R$ with $\defect_\phi(c) = 0$. Note as well that in this case, $\H$ is not wedged, and $\defect_\phi(a) = 0$, as desired.

    Case 2: $H$ does not contain a cycle $a_1b_1c_id_1a_1$ for some $i \in \{1, 2\}$. Then we may assume $d_1 \sim c_2 \sim b_2$ or $d_2 \sim c_2 \sim b_1$; without loss of generality, assume the former. Then $d_2 \sim c_1 \sim b_1$.
    
    If $b_1 \sim d_1$, then $b_2 \sim d_2$; thus $\H$ is wedged (under the isomorphism switching $c_1$ and $c_2$). Take $\phi(b) = b_1$, $\phi(c) = c_2$, and $\phi(d) = d_2$. Observe $\phi$ is a $1$-defective $\H$-coloring of $R$ with $\defect_{\phi}(c) = 0$. Otherwise, $b_1 \sim d_2$, and thus $d_1 \sim b_2$. Take $\phi(b) = b_1$, $\phi(c) = c_2$, $\phi(d) = d_2$. Then $\phi$ is a $1$-def $\H$-coloring with $\defect_\phi(c) = 0$. On the other hand, taking instead $\phi(b) = b_2$, $\phi(c) = c_2$, and $\phi(d) = d_2$ results in a $1$-defective $\H$-coloring with $\defect_\phi(a) = 0$.
\end{proof}

\begin{lemma}\label{lemma: not twisted and wedged}
    Let $\H$ be a correspondence cover for $R$ with $\ell(a) = 1$ and $\ell(b) = \ell(c) = \ell(d) = 2$. Suppose $L(c) = \{c_1, c_2\}$ and $\H\setminus\{c_2\}$ is twisted. Then $\H\setminus\{c_1\}$ is not wedged; furthermore, there is $0$-def $\H$-coloring $\phi$ of $R$ with $\phi(c) = c_2$.
\end{lemma}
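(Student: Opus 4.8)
The plan is to exploit the fact that the twisted structure of $\H\setminus\{c_2\}$, together with the standing maximality convention of this section (see Remark~\ref{remark: simplify cover}), pins down \emph{every} matching of $\H$ — including the two matchings that involve $c_2$ — after which both conclusions can simply be read off.

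First I would normalize. Applying the isomorphism witnessing that $\H\setminus\{c_2\}$ is twisted, I may assume $L(a)=\{a_1\}$, $L(b)=\{b_1,b_2\}$, $L(c)=\{c_1,c_2\}$, $L(d)=\{d_1,d_2\}$, and that $a_1b_1d_2c_1b_2d_1a_1$ is a cycle in $H$. Reading off the cycle edges gives $a_1\sim b_1$, $a_1\sim d_1$, $b_1\sim d_2$, $b_2\sim d_1$, $c_1\sim d_2$, and $c_1\sim b_2$; and since $a$, $b$, $c$, $d$ have the stated list sizes, the matching condition forces the complementary pairs to be non-edges (e.g.\ $a_1\not\sim b_2$, $a_1\not\sim d_2$, $b_2\not\sim d_2$, $c_1\not\sim d_1$, $c_1\not\sim b_1$). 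Next I determine the matchings meeting $c_2$. Because $M_{bc}$ is a maximal matching between the two-element sets $\{b_1,b_2\}$ and $\{c_1,c_2\}$ and already contains $b_2c_1$, maximality forces it to be perfect, whence $b_1\sim c_2$; likewise $M_{cd}$ contains $c_1d_2$, so maximality forces $c_2\sim d_1$. This specifies $\H$ completely.

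Now both claims are immediate. For the \textbf{not wedged} assertion, note that in $\H\setminus\{c_1\}$ the single colour of $c$ is $c_2$, and since $a_1$ is matched to exactly one colour of $b$ and one of $d$, the labelling $a_1\sim b_1$, $a_1\sim d_1$ is forced; thus the wedged test of Definition~\ref{def: twisted and wedged} reduces to checking the three edges $b_2d_2$, $c_2b_2$, $c_2d_2$. But $M_{bd}$ gives $b_2\sim d_1$ (so $b_2\not\sim d_2$), $M_{bc}$ gives $c_2\sim b_1$ (so $c_2\not\sim b_2$), and $M_{cd}$ gives $c_2\sim d_1$ (so $c_2\not\sim d_2$); hence \emph{none} of the three is an edge, which is fewer than two, so $\H\setminus\{c_1\}$ is not wedged. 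For the colouring, set $\phi(a)=a_1$, $\phi(b)=b_2$, $\phi(c)=c_2$, $\phi(d)=d_2$. The three non-adjacencies just verified handle the edges $bd$, $bc$, $cd$, while $a_1\not\sim b_2$ and $a_1\not\sim d_2$ handle $ab$ and $ad$; thus $\phi$ is proper, i.e.\ a $0$-def $\H$-coloring with $\phi(c)=c_2$.

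The only step requiring genuine care is the deduction of $b_1\sim c_2$ and $c_2\sim d_1$: one must invoke maximality to rule out leaving $c_2$ unmatched to $b$ or to $d$, which is precisely where the standing "all matchings maximal" convention does the work. Everything else is routine bookkeeping of conflicts, so I anticipate no substantive obstacle.
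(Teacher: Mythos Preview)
Your proof is correct and takes the same route as the paper: normalize so that $a_1b_1d_2c_1b_2d_1a_1$ is the twisted cycle, then observe that $(a_1,b_2,c_2,d_2)$ is a proper coloring. One remark: contrary to what you flag as the delicate step, the maximality detour to obtain $c_2\sim b_1$ and $c_2\sim d_1$ is unnecessary, since the non-adjacencies $c_2\not\sim b_2$ and $c_2\not\sim d_2$ that you actually use follow directly from the matching condition applied to the already-known edges $c_1\sim b_2$ and $c_1\sim d_2$; this is exactly how the paper argues, and it also notes that the existence of the $0$-def coloring immediately yields ``not wedged'' without a separate verification.
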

\begin{proof}
    It suffices to prove the furthermore part, as it is easy to see the existence of such a coloring implies $\H\setminus\{c_1\}$ is not wedged.
    Suppose $\H\setminus\{c_2\}$ is twisted. Without loss of generality, we may assume $\H$ is such that $L(a) = \{a_1\}$, $L(b) = \{b_1, b_2\}$, and $L(d) = \{d_1, d_2\}$ and that $a_1b_1d_2c_1b_2d_1a_1$ is a cycle in $H$. Then $b_2 \notsim c_2$, $d_2 \notsim c_2$, $b_2 \notsim d_2$, $b_2 \notsim a_1$, $d_2 \notsim a_1$. Thus $\phi(a) = a_1$, $\phi(b) = b_2$, $\phi(c) = c_2$, and $\phi(d) = d_2$ is a $0$-def $\H$-coloring of $R$.
\end{proof}

\subsection{Good and Bad}
We introduce some notation that will allow us to better classify certain correspondence covers of $T$.

\begin{definition}\label{def: good and bad}
    Let $\H$ be a correspondence cover for $T \setminus \{u, v\}$ with $\ell(w) = 2$ for all $w \in \{u_1, u_2, v_1, v_2\}$ and $1 \leq \ell(x), \ell(y), \ell(z) \leq 2$. We say $\H$ is \emphdef{bad} if one of the following occurs:
    \begin{enumerate}[label = {(\arabic*-bad)}, itemsep = .5em, leftmargin = \leftmargin+2\parindent]
        \item $\ell(x) = 2$, $\ell(z) = 1$, $\ell(y) = 2$, and $\H(R_1)$ and $\H(R_2)$ are both wedged.
        \item $\ell(x) = 1$, $\ell(z) = 1$, $\ell(y) = 2$, and \begin{enumerate}[label = {(\roman*)}]
            \item $\H(R_1)$ is twisted, or
            \item $\H(R_1)$ and $\H(R_2)$ are wedged.
        \end{enumerate}
        \item $\ell(x) = 2$, $\ell(z) = 1$, $\ell(y) = 1$, and \begin{enumerate}[label = {(\roman*)}]
            \item $\H(R_2)$ is twisted, or
            \item $\H(R_1)$ and $\H(R_2)$ are wedged.
        \end{enumerate}
    \item $\ell(x) = 1$, $\ell(z) = 2$, $\ell(y) = 1$, and there exist distinct $z_1, z_2 \in L(z)$ such that $\H(R_1) \setminus \{z_2\}$ and $\H(R_2) \setminus \{z_1\}$ are twisted.
    \item $\ell(x) = 1$, $\ell(z) = 1$, $\ell(y) = 1$, and \begin{enumerate}[label = {(\roman*)}]
            \item $\H(R_1)$ is twisted, or $\H(R_2)$ is twisted, or
            \item $\H(R_1)$ and $\H(R_2)$ are wedged.
        \end{enumerate}
    \end{enumerate}
    We say $\H$ is \emphdef{good} otherwise. 
\end{definition}

Suppose $\H$ is a $3$-fold correspondence cover for $T$, and $\phi$ is a partial $\H$-coloring of $T$ with $\dom(\phi) = \{u, v\}$. Then observe $\mathcal{H}_\phi$ is a correspondence cover for $T\setminus\{u, v\}$ such that $\ell_\phi(w) = 2$ for all $w \in \{u_1, u_2, v_1, v_2\}$ and $1 \leq \ell_\phi(x), \ell_\phi(y), \ell_\phi(z) \leq 2$. 
Thus $\H_{\phi}$ is either good or bad. This justifies the following definition:

\begin{definition}
    Let $\H$ be a 3-fold cover for $T$, and let $(\alpha, \beta) \in L(u) \times L(v)$. Then $T$ is \emphdef{good} (resp. \emphdef{bad}) for $(\alpha, \beta)$ if $\H_\phi$ is good (resp. bad), where $\phi$ is the partial $\H$-coloring of $T$ with domain $\{u, v\}$ such that $\phi(u) = \alpha$ and $\phi(v) = \beta$.
\end{definition}

\begin{lemma}\label{lemma: bad for at most 3}
    Let $\H$ be a 3-fold correspondence cover for $T$. Then $T$ is bad for at most 6 pairs $(\alpha, \beta) \in L(u) \times L(v)$.
\end{lemma}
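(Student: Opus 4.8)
The plan is to arrange the nine pairs $(\alpha,\beta)\in L(u)\times L(v)$ as a $3\times 3$ grid (rows indexed by $\alpha$, columns by $\beta$) and to prove that at least three of them are good, i.e. that at most six are bad. The first step is to pin down how the list sizes $\ell_\phi(x),\ell_\phi(z),\ell_\phi(y)$ of the induced cover $\H_\phi$ depend on $(\alpha,\beta)$. Each of $x,z,y$ is adjacent in $T$ to both $u$ and $v$ (see Figure~\ref{Fig: T}), and by Remark~\ref{remark: simplify cover} we may take all matchings maximal, hence perfect between lists of size $3$; so for $w\in\{x,z,y\}$ the color of $w$ deleted by $u$ is $\sigma^u_w(\alpha)$ and the color deleted by $v$ is $\sigma^v_w(\beta)$, for bijections $\sigma^u_w\colon L(u)\to L(w)$ and $\sigma^v_w\colon L(v)\to L(w)$. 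Thus $\ell_\phi(w)=2$ precisely when $\sigma^u_w(\alpha)=\sigma^v_w(\beta)$, i.e. when $\beta=\tau_w(\alpha)$ where $\tau_w\defeq(\sigma^v_w)^{-1}\circ\sigma^u_w$ is a bijection $L(u)\to L(v)$. Hence for each $w$ the set $T_w$ of pairs with $\ell_\phi(w)=2$ is a transversal of the grid (exactly one pair in each row and each column), the other six pairs having $\ell_\phi(w)=1$. This assigns to every pair one of the eight patterns $(\ell_\phi(x),\ell_\phi(z),\ell_\phi(y))\in\{1,2\}^3$, and shows that the three patterns which are good by default, namely $(1,2,2)$, $(2,2,1)$, $(2,2,2)$, occur exactly on $T_z\cap(T_x\cup T_y)$.

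Next I would bound the bad pairs one row at a time: fixing $\alpha$ and letting $\beta$ range over its three values, the goal is to show that at most two of the three pairs in that row are bad, so that summing over the three rows gives the bound $6$ (a symmetric argument over columns works equally well). Within a row the three columns $\tau_x(\alpha),\tau_y(\alpha),\tau_z(\alpha)$ determine the patterns. If they all coincide, the pair at that column has pattern $(2,2,2)$ and is good by default, so the row already has at most two bad pairs. Otherwise the three pairs realize patterns among the cases (1)--(5) of Definition~\ref{def: good and bad}, and to finish one must exhibit, for each such row, at least one pair whose twisted/wedged condition fails.

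The main obstacle is exactly this: controlling how the twisted and wedged status of $\H_\phi(R_1)$ and $\H_\phi(R_2)$ varies as $(\alpha,\beta)$ moves through a fixed row. The reduced lists of $u_1,v_1$ (resp. $u_2,v_2$) depend on $\alpha$ (resp. $\beta$), while the surviving colors of $x,z,y$ depend on both, so the $6$-cycle defining \emph{twisted} and the edge set defining \emph{wedged} rotate as $\beta$ changes. I would translate each instance of twistedness into the presence of a specific $6$-cycle on the four reduced lists of $R_1$ (or $R_2$) and each instance of wedgedness into a specific pair of edges, and then use the matching condition---each color has at most one partner across a given edge---to argue that, for fixed $\alpha$, these configurations cannot persist for all three values of $\beta$ simultaneously. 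Concretely, I expect to show that twistedness of $R_1$ can occur for at most one $\beta$ in the row and symmetrically for $R_2$, so that after discarding the (at most) two offending columns a good pair survives.

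Assembling the per-row bounds then yields that $T$ is bad for at most six of the nine pairs. I anticipate the most delicate bookkeeping to be case (4) (pattern $(1,2,1)$), where one first deletes a color from the size-$2$ list $L(z)$ before testing twistedness of each $R_i$; here I would invoke Lemma~\ref{lemma: not twisted and wedged}, which links twistedness-after-deletion to non-wedgedness, to keep the case analysis finite and to rule out the simultaneous badness that would be needed to overfill a row.
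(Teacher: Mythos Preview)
Your setup is sound: the transversal analysis of the triples $(\ell_\phi(x),\ell_\phi(z),\ell_\phi(y))$ is correct, and the identification of the three ``automatically good'' patterns $(2,2,2),(1,2,2),(2,2,1)$ matches Definition~\ref{def: good and bad}. However, the heart of your argument---the per-row bound ``at most two bad pairs in each row''---is not established, and the mechanism you sketch does not prove it. You propose to show that for fixed $\alpha$, twistedness of $\H_\phi(R_1)$ (resp.\ $\H_\phi(R_2)$) can hold for at most one $\beta$, and then to discard those two columns. But badness is not driven only by twistedness: in cases (1), (2)(ii), (3)(ii) and (5)(ii) of Definition~\ref{def: good and bad}, a pair is bad because both $\H_\phi(R_1)$ and $\H_\phi(R_2)$ are \emph{wedged}, with no twist anywhere. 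So even if your twistedness bound holds, the surviving $\beta$ can still be bad via wedgedness, and your row bound does not follow. You have given no argument bounding how many $\beta$ in a row can produce simultaneous wedgedness of $R_1$ and $R_2$, and Lemma~\ref{lemma: not twisted and wedged} (which you invoke for case~(4)) says nothing about this. Concretely, nothing in your outline excludes a row whose three pairs are of types $2$--$1$, $1$--$2$, $1$--$1$ with the first and third bad purely through wedging.

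The paper's proof does \emph{not} proceed row by row. It assigns each bad pair a type in $\{2\text{--}1,\,1\text{--}1,\,1\text{--}2\}$ encoding $(\ell_\phi(x),\ell_\phi(z))$ and proves three global constraints: (i)~two bad pairs in the same row or column cannot both be type $1$--$2$, nor both type $2$--$1$ (a one-line matching argument on the common neighbor in $L(z)$); (ii)~two bad pairs with four distinct coordinates cannot both be type $1$--$2$ (here one actually uses the $6$-cycle structure of \emph{twistedness}, which is forced in case~(4)); and (iii)~the three pairs in a row cannot all have $\ell_\phi(x)=1$, nor all have $\ell_\phi(z)=1$. It then assumes seven bad pairs, normalizes, and chases types through a full row and a full column to reach a contradiction. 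The key difference from your plan is that the paper never needs to bound wedgedness at all: the type constraints (i)--(iii) are purely combinatorial facts about the bijections $\tau_x,\tau_z$ and about the rigidity of the twisted $6$-cycle, and they are strong enough to forbid seven bad pairs globally without ever proving a per-row bound of two. If you want to salvage your row-by-row strategy, you would need an additional lemma controlling simultaneous wedgedness across a row; otherwise, adopt the paper's type-counting argument.
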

\begin{proof}
    See \S\ref{Section: not 7 bad pairs}.
\end{proof}

For the next two lemmas, let $\mathcal{H}$ be a 3-fold correspondence cover for $T$, and $\phi$ a partial $\H$-coloring of $T$ with $\dom(\phi) = \{u, v\}$. As discussed above, $\H_\phi$ is either good or bad.

\begin{lemma}\label{Lemma: good}
     If $\H_{\phi}$ is good, then $\phi$ can be extended to a $1$-defective $\H$-coloring $\psi$ of $T$ such that $\defect_\psi(u) = \defect_\psi(v) = 0$.
\end{lemma}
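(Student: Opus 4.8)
The plan is to prove the contrapositive structurally: assuming $\H_\phi$ is good (i.e., \emph{not} bad in the sense of Definition \ref{def: good and bad}), I want to produce a $1$-defective extension $\psi$ with $\defect_\psi(u) = \defect_\psi(v) = 0$. Since $\dom(\phi) = \{u,v\}$, the colors $\phi(u) = \alpha$ and $\phi(v) = \beta$ are already fixed, and the defect of $u$ and $v$ depends only on how the neighbors of $u$ (namely $x, u_1, u_2, z$) and of $v$ (namely $z, v_1, v_2, y$) are colored in the subcover $\H_\phi$. Requiring $\defect_\psi(u) = \defect_\psi(v) = 0$ is exactly the requirement that we color the graph $T \setminus \{u,v\}$ using the \emph{subcover} $\H_\phi$ and that this coloring be $1$-defective on $\H_\phi$ (the interior conflicts) — this is why $\H_\phi$ has the list sizes it does, with $\ell_\phi(w) = 2$ for $w \in \{u_1, u_2, v_1, v_2\}$ and $1 \le \ell_\phi(x), \ell_\phi(y), \ell_\phi(z) \le 2$. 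So it suffices to show: whenever $\H_\phi$ is good, there is a $1$-defective $\H_\phi$-coloring of $T \setminus \{u,v\}$.

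First I would reduce the problem to $R_1$ and $R_2$, which overlap only in the vertex $z$. The key observation is that $T \setminus \{u,v\}$ is the union of $R_1 = T[x, u_1, v_1, z]$ and $R_2 = T[z, u_2, v_2, y]$ glued along $z$, so a $1$-defective coloring of the whole amounts to a choice of color for $z$ together with compatible $1$-defective colorings of each $R_i$ that agree on $z$. I would then split into the five cases indexed by the values of $(\ell(x), \ell(z), \ell(y))$ exactly as in Definition \ref{def: good and bad}, since being good is defined case-by-case on these list sizes. In each case the hypothesis that $\H_\phi$ is \emph{not} bad translates into a concrete structural statement — for instance, in the $(1,1,2)$ case, goodness says $\H(R_1)$ is not twisted and not both $\H(R_1), \H(R_2)$ wedged — and I would feed this into the already-proved coloring lemmas (Lemmas \ref{Lemma: 1-1 not twisted or wedged}, \ref{Lemma: 1-2}, \ref{lemma: not twisted and wedged}) applied to $R_1$ and $R_2$ separately. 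Those lemmas are precisely designed to yield $1$-defective colorings with controlled defect at the appropriate vertex (the ``$\defect_\phi(a) = 0$'' conclusions), which is what lets us color $z$ so that the two pieces glue without creating a defect-violation at $z$.

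The main obstacle, and the heart of the argument, is the gluing at $z$ when $\ell(z) = 2$ versus $\ell(z) = 1$. When $\ell(z) = 1$, the color of $z$ is forced, so both $R_1$ and $R_2$ must be colored compatibly with that single forced color; here goodness (not twisted, not both-wedged) must be exactly strong enough that at least one of the two subcovers can absorb a defect at $z$ while the other is colored with $\defect(z) = 0$, keeping $z$'s total defect at most $1$. This is where I would lean on the ``furthermore'' clauses of Lemmas \ref{Lemma: 1-1 not twisted or wedged} and \ref{Lemma: 1-2}: if $\H(R_1)$ is not wedged I can color $R_1$ with $\defect(z)$-contribution $0$, and symmetrically for $R_2$, so at most one side contributes a defect to $z$. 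When $\ell(z) = 2$ (the $(1,2,1)$ case), I have the freedom to pick between two colors of $z$, and goodness says it is \emph{not} the case that one choice twists $R_1$ while the other twists $R_2$; so at least one color $z_i$ avoids twisting on the side that would otherwise be forced to $\defect(z) \ge 1$, and Lemma \ref{lemma: not twisted and wedged} (or \ref{Lemma: 1-1 not twisted or wedged}) then supplies a $0$-defective coloring on that side. I expect the bookkeeping — verifying in each of the five cases that the negation of ``bad'' lines up exactly with the hypotheses of the relevant lemma, and that the defect contributions at $z$ from the two sides sum to at most $1$ — to be the tedious but decisive step, rather than any single deep idea.
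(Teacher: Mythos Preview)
Your proposal is correct and mirrors the paper's proof exactly: the paper first reduces to showing that any good cover of $T\setminus\{u,v\}$ admits a $1$-defective coloring (stated separately as Lemma~\ref{Lemma: good simple}), then proves this by precisely your case split on $(\ell(x),\ell(z),\ell(y))$, gluing $R_1$ and $R_2$ at $z$ and invoking Lemmas~\ref{Lemma: 1-1 not twisted or wedged}, \ref{Lemma: 1-2}, and \ref{lemma: not twisted and wedged} just as you describe. One small bookkeeping point: there are eight triples in $\{1,2\}^3$, not five --- the three you omit (those with $\ell(z)=2$ and $\max(\ell(x),\ell(y))=2$) carry no bad condition at all and are dispatched trivially by two applications of Lemma~\ref{Lemma: 1-2}\ref{Lemma: 1-2 coloring}.
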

\begin{proof}
    See \S\ref{Section: proof of good}.
\end{proof}

\begin{lemma}\label{lemma: bad}
 If $\H_{\phi}$ is bad, then $\phi$ can be extended to $1$-defective $\H$-colorings $\psi, \rho$ of $T$ such that $\defect_\psi(u) \leq 1$, $\defect_\psi(v) = 0$, and $\defect_\rho(u) = 0$, $\defect_\rho(v) \leq 1$.
\end{lemma}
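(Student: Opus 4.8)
The plan is to exploit the $u\leftrightarrow v$ symmetry of $T$ to reduce to constructing $\psi$, and to realise the single permitted defect by letting one neighbour of $u$ use its colour that conflicts with $\phi(u)$. The graph $T$ admits the automorphism $\sigma$ interchanging $u\leftrightarrow v$, $u_1\leftrightarrow v_1$, $u_2\leftrightarrow v_2$ and fixing $x,y,z$. Since $u_i,v_i$ occupy the symmetric roles $b,d$ of $R$ inside $R_i$, the map $\sigma$ carries $\H_\phi$ to an isomorphic cover and so preserves whether $\H(R_1)$ and $\H(R_2)$ are twisted or wedged, while fixing $\ell_\phi(x),\ell_\phi(y),\ell_\phi(z)$; hence it sends each case of Definition~\ref{def: good and bad} to itself. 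Thus a uniform construction producing, for every bad cover, an extension with $\defect(u)\le 1$ and $\defect(v)=0$, applied to $\sigma(\H_\phi)$ and pulled back along $\sigma$, yields the desired $\rho$. So it suffices to build $\psi$.

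For the mechanism, note that because $\H_\phi$ is bad it has no $1$-defective colouring using only $L_\phi$-colours (such a colouring would give $\defect(u)=\defect(v)=0$, i.e.\ the conclusion of Lemma~\ref{Lemma: good}); hence $\psi$ must spend one defect at $u$. I would do this by choosing one neighbour $w$ of $u$ and restoring to $L(w)$ its unique colour $c_w\sim\phi(u)$. I will only restore a $w$ that is either adjacent to both $u$ and $v$ with $\ell_\phi(w)=1$ (so $w$ loses two distinct colours) or adjacent to $u$ alone; in either case $c_w\notsim\phi(v)$, so using $c_w$ creates one defect at $u$ and none at $v$, keeping $\defect_\psi(v)=0$. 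I then colour the two diamonds $R_1,R_2$ using Lemmas~\ref{Lemma: 1-1 not twisted or wedged}, \ref{Lemma: 1-2} and \ref{lemma: not twisted and wedged}, agreeing on the shared vertex $z$ and keeping $\defect_{R_1}(z)+\defect_{R_2}(z)\le 1$.

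The case analysis of Definition~\ref{def: good and bad} is organised around a dichotomy between the two obstructions. A twisted diamond (with its poles forced) admits no $1$-defective colouring at all, so it must be genuinely repaired; restoring the appropriate pole turns it into the hypothesis of Lemma~\ref{lemma: not twisted and wedged} and yields a $0$-defective colouring in which that pole takes $c_w$. A wedged diamond, by contrast, does admit a $1$-defective colouring but may force one unit of defect onto $z$; here it suffices to restore a colour zeroing out the $z$-defect on one side and to colour the other by Lemma~\ref{Lemma: 1-2}\ref{Lemma: 1-2 coloring} (or the not-wedged half of Lemma~\ref{Lemma: 1-1 not twisted or wedged}). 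When exactly one diamond is obstructed (the single-twist alternatives of (2-bad), (3-bad), (5-bad)) I restore that diamond's private pole $x$ or $y$ and colour the other through $z$ as it stands; when both are obstructed I must make the single restoration help both halves, which is where $z$ being shared becomes essential.

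The main obstacle is precisely the doubly-obstructed cases. For two twisted diamonds (case (4-bad) and the both-twisted instance of (5-bad)) one repairs both at once by restoring $z$, after which Lemma~\ref{lemma: not twisted and wedged} forces the \emph{same} colour of $z$ on each side; case (4-bad) is the most delicate, since there $z$ already has two colours and the twist-conditions are crossed (one side removes $z_2$, the other $z_1$), so the colours the two halves tolerate disagree and one must instead restore a pole and reconcile the forced choices directly. For two wedged diamonds (case (1-bad), the wedged alternatives of (2-bad) and (3-bad), and (5-bad)(ii)) Lemma~\ref{lemma: not twisted and wedged} does not apply, and the difficulty is to verify that restoring a single $u$-incident colour still leaves room, against the rigid wedged matching around $z$, to recolour one diamond with $\defect(z)=0$. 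Carrying out these finite matching verifications uniformly across all bad covers is the technical heart of the lemma.
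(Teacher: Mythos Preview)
Your plan is correct and matches the paper's \S\ref{section: proof of lemma bad}: the same case split over the bad types, the single defect always placed at $u_1$, $u_2$, or $z$, the diamonds then handled via Lemmas~\ref{Lemma: 1-1 not twisted or wedged}--\ref{lemma: not twisted and wedged} or an explicit independent set, and your $u\leftrightarrow v$ reduction appearing there as the recurring ``$\rho$ can be constructed similarly.'' The verifications you defer are exactly the ones the paper carries out by hand---for 4-bad it restores $b_3\in L(u_1)$ and names a concrete colouring, and for the doubly-wedged branch of 5-bad it sub-branches further on the matching at $x$---so completing your sketch amounts to supplying those; incidentally, your restore-$z$-and-apply-Lemma~\ref{lemma: not twisted and wedged}-twice idea for the both-twisted subcase of 5-bad is tidier than the paper's explicit coordinates there.
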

\begin{proof}
    See \S\ref{section: proof of lemma bad}.
\end{proof}

\subsection{Proof of Theorem \ref{theorem: 1d3corr but not 4corr}}

For an integer $k \in \N$, let $T(k)$ be the graph formed as follows: take $k$ disjoint copies of $T$, and identify $x$ across each copy to a single vertex; similarly, identify $y$ across each copy to a single vertex. The two theorems below show $T(4)$ is $1$-defective $3$-correspondable, but not $4$-correspondable, proving Theorem \ref{theorem: 1d3corr but not 4corr}.

\begin{theorem}\label{Theorem: $1$-def $3$-corr}
    $T(4)$ is $1$-defective $3$-correspondable.
\end{theorem}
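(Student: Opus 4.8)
The plan is to show $T(4)$ is $1$-def $3$-correspondable by fixing an arbitrary $3$-fold correspondence cover $\H$ and producing a $1$-def $\H$-coloring. The natural first move is to color the two shared vertices $x$ and $y$ first, and then extend independently across each of the four copies of $T$ using the machinery already developed. So I would begin by choosing colors $\phi(x) \in L(x)$ and $\phi(y) \in L(y)$; after this choice, each copy of $T$ becomes a copy of $T$ with $x,y$ precolored, and the induced subcover on each copy is either good or bad in the sense of Definition \ref{def: good and bad}. Here, however, I need to be careful: the good/bad dichotomy as stated is phrased in terms of precoloring $u,v$, not $x,y$. Since the genuine obstruction in this graph is concentrated at $x$ and $y$ (these are the high-degree identified vertices, whereas $u,v$ are internal to each copy), the key structural input is that Lemmas \ref{Lemma: good} and \ref{lemma: bad} guarantee that any copy of $T$ can be $1$-def colored extending a precoloring of its two ``boundary'' vertices while forcing $0$ defect on at least one of them.

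The heart of the argument is a counting/pigeonhole step. By Lemma \ref{lemma: bad for at most 3}, for any fixed copy of $T$ the cover is bad for at most $6$ of the $9$ pairs $(\alpha,\beta) \in L(x) \times L(y)$ (reading the roles of $u,v$ as played by $x,y$ for the identified vertices). Summing over the four copies, the total number of (copy, bad-pair) incidences is at most $24$, spread over $9$ possible pairs. I would like to conclude that some pair $(\alpha,\beta)$ is good for a controlled number of copies — but $24/9 > 2$, so a naive averaging does not immediately yield a pair that is good for all four copies. Instead I expect the correct bookkeeping to track defects at $x$ and $y$ directly: color $x \defeq \alpha$ and $y \defeq \beta$, and in each copy that is bad for $(\alpha,\beta)$, invoke Lemma \ref{lemma: bad} to route the single unavoidable defect to whichever of $x,y$ currently has slack, while each good copy contributes $0$ defect to both by Lemma \ref{Lemma: good}. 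Since a $1$-def coloring permits defect up to $1$ at each of $x$ and $y$, I can afford at most one bad copy charging $x$ and one charging $y$, i.e. at most two bad copies total at the chosen pair.

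Thus the real target of the pigeonhole is: there exists a pair $(\alpha,\beta)$ that is bad for at most two of the four copies. With each copy bad for at most $6$ pairs, equivalently good for at least $3$ pairs, I would count good incidences: at least $4 \times 3 = 12$ good (copy, pair) incidences distributed over $9$ pairs. By averaging, some pair is good for at least $\lceil 12/9 \rceil = 2$ copies, hence bad for at most two copies — exactly the bound I need. I would then fix such a pair $(\alpha,\beta)$, set $\phi(x)=\alpha$, $\phi(y)=\beta$, apply Lemma \ref{Lemma: good} to the (at least two) good copies to get $0$ defect contributions, and apply Lemma \ref{lemma: bad} to the at most two bad copies, sending one unit of defect to $x$ and the other to $y$, so that $\defect(x) \leq 1$ and $\defect(y) \leq 1$ and every internal vertex has defect at most $1$.

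The step I expect to be the main obstacle is reconciling the orientation of the good/bad framework: the lemmas are stated with $u,v$ as the precolored boundary pair, yet in $T(k)$ it is $x$ and $y$ that are identified across copies. I would need to verify that $T$ is symmetric enough (or that the relevant subcover analysis transfers) so that precoloring $x,y$ and extending toward $u,v$ behaves exactly as Lemmas \ref{lemma: bad for at most 3}, \ref{Lemma: good}, and \ref{lemma: bad} describe — and in particular that ``bad for at most $6$ pairs'' and the defect-routing guarantee of Lemma \ref{lemma: bad} apply with $x,y$ in the role of $u,v$. Once that correspondence is cleanly established, the averaging bound $\lceil 12/9 \rceil = 2$ closes the argument and the rest is routine assembly of the per-copy colorings into a single $1$-def $\H$-coloring of $T(4)$.
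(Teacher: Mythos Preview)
Your overall strategy is exactly the paper's: precolor the two identified vertices, use the bound of Lemma~\ref{lemma: bad for at most 3} together with a double count over the $9$ pairs and $4$ copies to find a pair that is bad for at most two copies, then apply Lemma~\ref{Lemma: good} on good copies and Lemma~\ref{lemma: bad} on the (at most two) bad copies, routing one defect to each identified vertex. Your averaging ($12$ good incidences over $9$ pairs forces a pair good for at least $2$ copies) is equivalent to the paper's contrapositive count ($9\cdot 3=27>24=4\cdot 6$).

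The only real issue is the one you flagged, and it is not a mathematical obstacle but a typo in the paper's definition of $T(k)$: the vertices identified across copies are $u$ and $v$, not $x$ and $y$. This is clear from the paper's own proof of this theorem (which works in $L(u)\times L(v)$ and sets $\phi(u)=\alpha$, $\phi(v)=\beta$), from the entire good/bad machinery (Definition~\ref{def: good and bad}, Lemmas~\ref{lemma: bad for at most 3}--\ref{lemma: bad}), and from the proof of Theorem~\ref{Theorem: T(8) not $4$-correspondable} in \S\ref{Section: T(8) not $4$-correspondable}, all of which treat $u,v$ as the shared boundary pair. Structurally this also matches: in a single copy of $T$, the vertices $u$ and $v$ are each adjacent to all of $x,u_1,v_1,z,u_2,v_2,y$ (resp.\ $x,v_1,z,v_2,y$), so they are the natural high-degree hubs, whereas $x,y$ sit at the ends of the two $R$-blocks.

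So your worry that you would need $T$ to be ``symmetric enough'' to swap the roles of $\{x,y\}$ and $\{u,v\}$ is unfounded---no such symmetry exists, and none is needed. Read $T(k)$ as identifying $u$ and $v$; then Lemmas~\ref{lemma: bad for at most 3}, \ref{Lemma: good}, and \ref{lemma: bad} apply verbatim, and your argument is complete and coincides with the paper's.
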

\begin{proof}
    Let $\H$ be a 3-fold correspondence cover for $T(4)$. We show there exists $(\alpha, \beta) \in L(u) \times L(v)$ such that $(\alpha, \beta)$ is bad for at most two copies of $T$. Suppose not, and that each pair $(\alpha, \beta) \in T(u) \times T(v)$ is bad for at least 3 copies of $T$. Since Lemma \ref{lemma: bad for at most 3} implies each $T$ is bad for at most $6$ pairs of $(\alpha, \beta)$, a simple double counting argument shows there exists at least 5 copies of $T$, a contradiction. 
    Thus there exists $(\alpha, \beta) \in L(u) \times L(v)$ that is bad for at most 2 copies of $T$. Let $\phi(u) = \alpha$ and $\phi(v) = \beta$. 
    For the two copies of $T$ that are bad for $(\alpha, \beta)$, say $T_i$ and $T_j$, by Lemma \ref{lemma: bad}, we may extend $\phi$ to a $1$-defective $\H$-coloring of $T_i$ such that $\alpha$ will have at most one colored neighbor, and we may extend $\phi$ to a $1$-defective $\H$-coloring of $T_j$ such that $\beta$ will have at most one colored neighbor. Since all other copies of $T$ are good for $(\alpha, \beta)$, by Lemma \ref{Lemma: good} we may extend $\phi$ to a $1$-defective coloring of these copies so $\alpha$ and $\beta$ have no colored neighbors in these copies. The result of these colorings is a $1$-defective $\H$-coloring of $T(4)$.
\end{proof}

\begin{theorem}\label{Theorem: T(8) not $4$-correspondable}
    $T(4)$ is not $4$-correspondable.
\end{theorem}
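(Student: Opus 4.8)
The plan is to construct a single $4$-fold correspondence cover $\H = (L, H)$ for $T(4)$ that admits no proper ($0$-defective) $\H$-coloring. Write $u, v$ for the two identified vertices, so that $|L(u)| = |L(v)| = 4$, and observe that any $\H$-coloring of $T(4)$ amounts to first fixing a pair $(\alpha, \beta) \in L(u) \times L(v)$ and then properly coloring each of the four copies of $T$ with $u, v$ pinned to $\alpha, \beta$; there are exactly $16$ such pairs. I will design the cover on the four copies $T_1, \ldots, T_4$ so that each $T_i$ admits no proper $\H$-coloring extending $(\alpha,\beta)$ for every $(\alpha,\beta)$ in a prescribed block $S_i \subseteq L(u)\times L(v)$ with $|S_i| = 4$, and so that $S_1, S_2, S_3, S_4$ partition all $16$ pairs. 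Granting this, a proper $\H$-coloring of $T(4)$ would fix some pair $(\alpha,\beta)$, which lies in some $S_i$; but then $T_i$ could not be properly colored, a contradiction. Hence $T(4)$ is not $4$-correspondable.

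The mechanism for making a copy uncolorable is the twisted configuration of Definition \ref{def: twisted and wedged}. First I record the key obstruction: if $\H'$ is a cover for $R$ with $\ell(a) = \ell(c) = 1$ and $\ell(b) = \ell(d) = 2$ that is twisted, then $R$ has no proper $\H'$-coloring. Indeed, with $\phi(a) = a_1$ and $\phi(c) = c_1$ forced, the twisting edges $a_1 \sim b_1$, $c_1 \sim b_2$, $a_1 \sim d_1$, $c_1 \sim d_2$ together with $b_1 \sim d_2$ and $b_2 \sim d_1$ leave every one of the four choices of $(\phi(b),\phi(d))$ conflicting; this is exactly the configuration that the proof of Lemma \ref{Lemma: 1-1 not twisted or wedged} sets aside as twisted. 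Consequently, if fixing $(\phi(u),\phi(v)) = (\alpha,\beta)$ forces the induced subcover on $R_1$ (or on $R_2$) to be twisted with its two non-adjacent vertices pinned, then $T_i$ is uncolorable for $(\alpha,\beta)$.

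To realize the blocks $S_i$, I will choose the matchings of $\H$ on $T_i$ -- those joining $u, v$ into the $R_1$- and $R_2$-gadgets, and those along the backbone connecting $x$, $z$, and $y$ -- so that precisely the four pole-colorings in $S_i$ propagate to a twisted configuration in $R_1$ or $R_2$, with the two non-adjacent vertices of that gadget (namely $x, z$ for $R_1$ and $z, y$ for $R_2$) pinned to single colors. Taking $S_i = \{(\alpha_i, \beta) : \beta \in L(v)\}$, the four pairs whose first coordinate is the $i$-th color of $u$, yields the desired partition of the $16$ pairs; symmetric blocks that obstruct on the $v$-side are also available and give flexibility in the construction.

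The main obstacle is the internal vertex $z$ shared by $R_1$ and $R_2$: after only pinning $u$ and $v$, the reduced list $L_\phi(z)$ still contains several colors, so I cannot force a twist in $R_1$ merely by pinning $x$ and $z$. To overcome this I will engineer the matchings so that each surviving color of $z$ activates a twist on one of the two sides -- if $\phi(z)$ is chosen to relieve $R_1$, it forces $R_2$ into a twisted state, and conversely -- using the freedom of correspondence (as opposed to list) covers to make the matchings $M_{zu_1}, M_{zv_1}$ on one side disagree with $M_{zu_2}, M_{zv_2}$ on the other in the manner twistedness demands. Verifying that, for every pair in $S_i$ and every admissible choice of $\phi(z)$ (and of $u_1, v_1, u_2, v_2$), some $R_j$ is twisted and hence $T_i$ is uncolorable is the crux of the argument and the step requiring the most careful bookkeeping.
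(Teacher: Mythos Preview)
Your high-level strategy---partition $L(u)\times L(v)$ into four blocks of four pairs and engineer each copy $T_i$ to be uncolorable for every pair in its block---is exactly the paper's. The gap is that you never actually write down a cover or carry out the verification; you explicitly defer ``the crux of the argument'' to unspecified bookkeeping, so as it stands this is a plan rather than a proof. There is also a technical mismatch in your proposed mechanism. A twisted $R$-cover needs $\ell(x)=\ell(z)=1$ and $\ell(u_1)=\ell(v_1)=2$, but after pinning only $u$ and $v$ each of $x,z,u_j,v_j,y$ retains at least two colors. You recognise this for $z$ and propose to case on $\phi(z)$, yet say nothing about the (at least two) colors still available at $x$ and at $y$; and once $\phi(z)$ is fixed, the lists at $u_1,v_1$ can drop to size one (three pinned neighbours), so the $(1,2,1,2)$ twist template no longer fits. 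None of this is fatal, but it means the ``careful bookkeeping'' you postpone is substantially more than bookkeeping.

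The paper sidesteps all of this with a much simpler obstruction than twistedness. It takes \emph{identity} matchings throughout $T\setminus\{u,v\}$ and designs only the matchings from $u$ and $v$ so that, once $(\alpha,\beta)$ is fixed, $L_\phi(z)$ consists of exactly two colors, and for each choice of $\phi(z)$ one of the triangles $xu_1v_1$ or $yu_2v_2$ is left with identical two-element lists under identity matchings---an odd cycle with no proper $2$-list-coloring. The blocks are taken to be the graphs of permutations $\sigma\in S_4$ (not the constant-$\alpha$ rows you propose); this choice is what makes the union of four such gadgets on a single copy $T_i$ a legitimate correspondence cover, since distinct pairs $(\alpha,\sigma(\alpha))$ contribute edges from distinct colors of $u$ and of $v$, so no matching condition is violated.
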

\begin{proof}
    See \S\ref{Section: T(8) not $4$-correspondable}.
\end{proof}

\section{Proof of Lemma \ref{lemma: bad for at most 3}}\label{Section: not 7 bad pairs}
We introduce some notation that will be useful in this section. Recall $\H = (L, H)$ is a 3-fold correspondence cover for $T$. Motivated by the values of $\ell(x)$, $\ell(z)$, and $\ell(y)$ in Definition \ref{def: good and bad}, we will say $(\alpha, \beta)$ is of \emphdef{type} 2-1 if $(\alpha, \beta)$ is 1-bad or 3-bad for $T$; type 1-1 if $(\alpha, \beta)$ is $2$-bad or $5$-bad for $T$; and type 1-2 if $(\alpha, \beta)$ is $4$-bad for $T$.

\begin{lemma}\label{lemma: not two 1-2's}
Let $(\alpha, \beta)$, $(\alpha, 
\gamma) \in L(u) \times L(v)$ with $\beta \neq \gamma$. Suppose $T$ is bad for $(\alpha, \beta)$ and $(\gamma, \delta)$. Then it is not the case that both $(\alpha, \beta)$ and $(\alpha, \gamma)$ are of type 1-2. It is also not the case that both $(\alpha, \beta)$ and $(\alpha, \gamma)$ are of type 2-1. Furthermore, the analogous statements hold for the pairs $(\beta, \alpha)$, $(\gamma, \alpha)$.
\end{lemma}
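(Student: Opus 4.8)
The plan is to reduce the entire statement to one observation about how the lists of $x$ and $z$ shrink when $u$ and $v$ are coloured. First I would record the relevant structural fact: in $T$ (Figure \ref{Fig: T}) each of $x$ and $z$ is adjacent to both $u$ and $v$, which is exactly what makes $\ell_\phi(x)$ and $\ell_\phi(z)$ able to take the value $1$. Thus colouring $u$ and $v$ deletes colours from $L(x)$ and $L(z)$ through the four matchings $M_{ux}, M_{vx}, M_{uz}, M_{vz}$. The one preliminary fact I need is that each of these matchings is \emph{perfect}. This follows from the running conventions of this section (Remark \ref{remark: simplify cover}): all lists have size exactly $3$ and all matchings are maximal, and a maximal matching between two colour sets of equal size $3$ must be perfect, since otherwise there would be an unmatched colour on each side which could be joined. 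Consequently, for $w \in \{x, z\}$ the assignment $\alpha \mapsto m_{uw}(\alpha)$ (the colour of $L(w)$ matched to $\alpha$ under $M_{uw}$) is a bijection $L(u) \to L(w)$, and likewise $\beta \mapsto m_{vw}(\beta)$ is a bijection $L(v) \to L(w)$.

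The crucial counting step is then immediate. Fix $\alpha \in L(u)$ and a target vertex $w \in \{x, z\}$. Colouring $u$ by $\alpha$ and $v$ by $\beta$ removes exactly the colours $m_{uw}(\alpha)$ and $m_{vw}(\beta)$ from $L(w)$, so $\ell_\phi(w) = 2$ holds precisely when these coincide, that is, when $m_{vw}(\beta) = m_{uw}(\alpha)$. Here $m_{uw}(\alpha)$ is a single fixed colour of $L(w)$, and since $\beta \mapsto m_{vw}(\beta)$ is a bijection there is \emph{exactly one} $\beta \in L(v)$ for which $\ell_\phi(w) = 2$; for the remaining two colours of $L(v)$ we get $\ell_\phi(w) = 1$.

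To finish, I would unwind the type definitions coming from Definition \ref{def: good and bad}: by construction a pair of type 1-2 (i.e.\ $4$-bad) satisfies $\ell_\phi(z) = 2$, while a pair of type 2-1 (i.e.\ $1$-bad or $3$-bad) satisfies $\ell_\phi(x) = 2$. Applying the counting step with $w = z$ shows that at most one $\beta$ can make $(\alpha, \beta)$ of type 1-2, so the distinct pairs $(\alpha, \beta)$ and $(\alpha, \gamma)$ are never both of type 1-2; taking $w = x$ gives the same conclusion for type 2-1. For the pairs $(\beta, \alpha)$ and $(\gamma, \alpha)$, which instead share their $L(v)$-coordinate, I would repeat the argument with the roles of $u$ and $v$ swapped, now fixing the colour assigned to $v$ and using that $\phi(u) \mapsto m_{uw}(\phi(u))$ is a bijection, so that at most one first coordinate yields $\ell_\phi(z) = 2$ (resp.\ $\ell_\phi(x) = 2$).

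The only genuine content here is the reduction to perfect matchings together with the exact identification of a pair's type with the value of $\ell_\phi(z)$ or $\ell_\phi(x)$; once this dictionary is in place, each of the four assertions is a one-line injectivity argument, so I do not anticipate a serious obstacle beyond stating the perfect-matching reduction carefully.
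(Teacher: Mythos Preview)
Your proposal is correct and follows essentially the same logic as the paper's proof: both arguments hinge on the observation that $\ell_\phi(z)=2$ (resp.\ $\ell_\phi(x)=2$) forces $\alpha$ and $\beta$ to have a \emph{common} neighbour in $L(z)$ (resp.\ $L(x)$), and then the matching condition forbids two distinct $\beta,\gamma\in L(v)$ from sharing that neighbour. The only cosmetic difference is that you phrase this via the bijections $m_{uw},m_{vw}$ coming from the perfect matchings, whereas the paper argues directly with ``common neighbour'' and the matching condition; the content is the same.
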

\begin{proof}
    Suppose $(\alpha, \beta)$ and $(\alpha, \gamma)$ are of type 1-2. Then $\alpha, \beta$ have a common neighbor $z_1 \in L(z)$, and $\alpha, \gamma$ have a common neighbor $z_2 \in L(z)$. By the matching condition, $z_1 = z_2$ and thus $\beta, \gamma$ have a common neighbor in $L(z)$, contradicting the matching condition. A similar argument shows the 2-1 statement, and the furthermore statement.
\end{proof}

\begin{lemma}\label{lemma: not two twists}
 Let $(\alpha, \beta)$, $(\gamma, \delta) \in L(u) \times L(v)$, with $\alpha, \beta, \gamma, \delta$ all distinct.
   Suppose $T$ is bad for $(\alpha, \beta)$, $(\gamma, \delta)$. Then it is not the case that both $(\alpha, \beta)$ and $(\delta, \gamma)$ are of type 1-2.
\end{lemma}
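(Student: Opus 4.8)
The plan is to argue by contradiction via the matching condition, in the same spirit as Lemma~\ref{lemma: not two 1-2's}, but now leveraging the rigidity of the twisted structure. First I would record the standing reduction from Remark~\ref{remark: simplify cover}: since every list has size exactly $3$ and every matching is maximal, each matching $M_e$ between two size-$3$ lists is in fact a \emph{perfect} matching. In particular, each of $x,y,z$ is adjacent to both $u$ and $v$, so $\alpha=\phi(u)$ and $\beta=\phi(v)$ each block exactly one color of $z$, and $\ell_\phi(z)=2$ forces $\alpha$ and $\beta$ to block the \emph{same} color of $z$, i.e.\ to have a common neighbor $\zeta\in L(z)$ (this is exactly the observation driving Lemma~\ref{lemma: not two 1-2's}). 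The same holds for the pair $(\gamma,\delta)$, giving a common neighbor $\zeta'\in L(z)$, and since $M_{uz}$ is a perfect matching and the two $u$-colors are distinct, we get $\zeta\neq\zeta'$; the third color $\zeta''$ of $L(z)$ is then the unique color surviving for \emph{both} pairs.

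Next I would unwind \emph{type 1-2} using Definition~\ref{def: good and bad} and Definition~\ref{def: twisted and wedged}. Being $4$-bad means $\ell_\phi(x)=\ell_\phi(y)=1$, $\ell_\phi(z)=2$, and there are distinct $z_1,z_2\in L_\phi(z)$ with $\H_\phi(R_1)\setminus\{z_2\}$ and $\H_\phi(R_2)\setminus\{z_1\}$ twisted. I would read the defining $6$-cycle $a_1b_1d_2c_1b_2d_1a_1$ off each twist and record the precise conflicts it forces: for $R_1$, which surviving color of $z$ conflicts with which surviving colors of $u_1$ and $v_1$ (and similarly for $R_2$ at $u_2,v_2$). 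The crucial point is that $z$ is the one vertex shared by $R_1$ and $R_2$, and that all of the relevant conflicts live in the single fixed graph $H$, whose matchings $M_{zu_1},M_{zv_1},M_{zu_2},M_{zv_2}$ are independent of which pair we color with. Overlaying the two forced $6$-cycles, I would track the common surviving $z$-color $\zeta''$ through both of them: in each twist $\zeta''$ (when it is the $z$-color retained in the relevant $R_i$) conflicts with prescribed colors of $u_i$ and $v_i$, while in the fixed perfect matchings $\zeta''$ can conflict with only one color of each. Comparing what the two twists demand of $\zeta''$ (and using that the two pairs block \emph{different} colors of $u_1,u_2,v_1,v_2$, so their surviving colors there overlap in exactly one each) should force a single matching to carry two edges at a common endpoint, or equivalently force two distinct $u$-colors (or two distinct $v$-colors) to correspond to the same color, contradicting the matching condition exactly as in Lemma~\ref{lemma: not two 1-2's}.

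The main obstacle is the bookkeeping in that last step: I must pin down which surviving colors the two twisted $6$-cycles actually select at the shared vertex $z$, and verify that the constraints they place really over-saturate one of the fixed matchings rather than merely being mutually consistent. Here I expect the extra structure of a twist to do the work beyond the crude list-size count---namely that in a twisted cover the two nonadjacent vertices $a,c$ have \emph{disjoint} neighborhoods, so each surviving $z$-color is pinned to a unique partner on each side. Correctly aligning the roles of $a,b,c,d$ across the two applications of Definition~\ref{def: twisted and wedged} (in particular, tracking whether the retained $z$-colors $z_1,z_2$ of one pair coincide with those of the other), and then isolating the precise matching that is forced to repeat an endpoint, is where the real work lies; once that color is located, the contradiction is an immediate appeal to the matching condition.
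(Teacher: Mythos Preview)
Your setup through the first paragraph is correct and matches the paper: each pair has a common neighbor in $L(z)$, these two neighbors are distinct, and the third $z$-color $\zeta''$ survives for both. But the plan after that has a genuine gap. You want to track $\zeta''$ through ``the two forced $6$-cycles,'' yet $\zeta''$ need not appear in any particular twisted cycle. For the pair $(\alpha,\beta)$ the two surviving $z$-colors are $\{\zeta',\zeta''\}$, and $4$-bad only says that one of them is the $z$-vertex of the $R_1$-twist and the other is the $z$-vertex of the $R_2$-twist; the $R_1$-twist from the first pair could use $\zeta'$ while the $R_1$-twist from the second pair uses $\zeta$, so $\zeta''$ is absent from both. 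Your hoped-for over-saturation of a matching at $\zeta''$ therefore need not materialize, and the parenthetical ``when it is the $z$-color retained'' is doing more work than the argument can support.

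The paper's route is different and avoids this issue entirely. It ignores $z$ and instead compares the two twisted $6$-cycles that both lie in $R_1$ (one coming from each pair; $4$-bad guarantees a twisted cycle in each of $R_1$ and $R_2$, and the proof only uses the $R_1$ ones). Each of these cycles passes through two of the three colors in $L(u_1)$, so by pigeonhole they share a vertex $b\in L(u_1)$. Now the key observation: in any twisted $6$-cycle on $R_1$, each $u_1$-vertex has exactly one cycle-neighbor in $L(v_1)$, and that edge lies in the fixed matching $M_{u_1v_1}$ of $H$. Hence both cycles contain the edge from $b$ to its $M_{u_1v_1}$-partner $d$, so $d$ is common too; iterating this propagation forces the two cycles to coincide. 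But they cannot coincide, since $\alpha\neq\gamma$ means the surviving pairs in $L(u_1)$ differ. The missing idea in your plan is exactly this: restrict to cycles in the \emph{same} $R_i$, pigeonhole on $L(u_1)$ rather than on $L(z)$, and propagate through the fixed matchings.
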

\begin{proof}
    Suppose $(\alpha, \beta)$, $(\gamma, \delta)$ are of type 1-2. Then $(\alpha, \beta)$ and $(\gamma, \delta)$ are both 4-bad. Let $L(z) = \{z_1, z_2, z_3\}$. Let $\phi$ be the partial $\H$-coloring given by $\phi(u) = \alpha$ and $\phi(v) = \beta$, and $\psi$ be the partial $\H$-coloring with $\psi(u) = \gamma$ and $\psi(v) = \delta$. Since $(\alpha, \beta)$ are type 1-2, they have a common neighbor in $L(z)$; without loss of generality, assume it is $z_1$. Similarly, $\gamma, \delta$ have a common neighbor in $L(z)$. Due to the matching condition, this common neighbor cannot be $z_1$; thus assume without loss of generality it is $z_3$.
    Since $(\alpha, \beta)$ is type 1-2, it follows $\H_{\phi}(R_1)\setminus\{z_2\}$ or $\H_{\phi}(R_2)\setminus\{z_3\}$ is twisted; either way, let $C$ be the cycle witnessing this.
    Now, as $(\gamma, \delta)$ is type 1-2, it follows either $\H_{\psi}(R_1)\setminus\{z_1\}$ or $\H_{\psi}(R_1)\setminus\{z_2\}$ is twisted; either way, let $D$ be the cycle witnessing this. 
    
   We claim $C \neq D$. Indeed, due to the matching condition, the neighbor of $\alpha$ in $L(u_1)$ is not equal to the neighbor of $\gamma$ in $L(u_1)$. Since $C$ uses the two vertices in $L_\phi(u_1)$ and $D$ uses the two vertices in $L_\psi(u_1)$, it follows $C$ and $D$ do not use the same vertices, and thus $C \neq D$.

    Now as $C$ and $D$ both use two vertices in $L(u_1)$, it follows they have a common vertex, say $b$, in $L(u_1)$.  Let $d$ be the neighbor of $b$ in $L(v_1)$. Then as $b$ is a vertex in both $C$ and $D$, it follows both $C$ and $D$ contain the edge $bd$. Thus $d$ is a common vertex in $C$ and $D$. Applying this idea iteratively, it follows that $C$ and $D$ contain all the same edges and thus vertices, and thus $C = D$, contradicting $C \neq D$.
\end{proof}

\begin{lemma}\label{lemma: not all 1's}
Let $(\alpha, \beta)$, $(\alpha, 
\gamma)$, and $(\alpha, \delta) \in L(u) \times L(v)$ with $\beta, \gamma, \delta$ all distinct.
   Suppose $T$ is bad for $(\alpha, \beta)$, $(\alpha, 
\gamma)$, and $(\alpha, \delta)$. Then there does not exist $i, j, k \in \{1, 2\}$ such that $(\alpha, \beta)$, $(\alpha, 
\gamma)$, and $(\alpha, \delta)$ are of type 1-$i$, 1-$j$, and 1-$k$, respectively. Similarly, there does not exist $i, j, k \in 
\{1, 2\}$ such that $(\alpha, \beta)$, $(\alpha, 
\gamma)$, and $(\alpha, \delta)$ are of type $i$-1, $j$-1, and $k$-1, respectively. Furthemore, the analogous statements hold for the pairs $(\beta, \alpha), (\gamma, \alpha)$, $(\delta, \alpha) \in L(u) \times L(v)$ with $\beta, \gamma, \delta$ all distinct.
\end{lemma}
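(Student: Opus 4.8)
The plan is to reduce both assertions to a single pigeonhole observation about a matching into a three–element list, exactly in the spirit of the first paragraph of Lemma~\ref{lemma: not two 1-2's}. Recall that in the type $A$-$B$ notation the first coordinate records $A = \ell_\phi(x)$ and the second records $B = \ell_\phi(z)$, where $\phi$ is the partial coloring assigning the pair in question to $(u,v)$; recall also from the structure of $T$ that each of $x$ and $z$ is adjacent to both $u$ and $v$ (this is forced anyway, since otherwise $\ell_\phi(x)$, resp.\ $\ell_\phi(z)$, could never drop below $2$, contradicting the existence of bad types with $\ell(x)=1$ or $\ell(z)=1$). Consequently, for $w \in \{x,z\}$ and a pair $(\alpha,\beta)$ with coloring $\phi$, the size‑$3$ list $L(w)$ loses exactly the colors conflicting with $\alpha$ (under $M_{wu}$) and with $\beta$ (under $M_{wv}$); as each matching contributes at most one such color, $\ell_\phi(w) = 1$ holds if and only if $\alpha$ and $\beta$ have \emph{distinct} neighbors $w_\alpha \neq w_\beta$ in $L(w)$, both necessarily existing.

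For the first statement, suppose toward a contradiction that $(\alpha,\beta)$, $(\alpha,\gamma)$, $(\alpha,\delta)$ are each of type $1$-$i$ for some $i \in \{1,2\}$, i.e.\ $\ell_\phi(x) = 1$ for all three. Write $x_\alpha$ for the neighbor of $\alpha$ in $L(x)$ under $M_{xu}$, and $x_\beta,x_\gamma,x_\delta$ for the neighbors of $\beta,\gamma,\delta$ in $L(x)$ under $M_{xv}$. The observation above forces $x_\alpha \notin \{x_\beta,x_\gamma,x_\delta\}$. But $\beta,\gamma,\delta$ are distinct colors of $L(v)$, so by the matching condition $x_\beta,x_\gamma,x_\delta$ are three distinct colors of $L(x)$; since $|L(x)| = 3$ they exhaust $L(x)$, forcing $x_\alpha$ to coincide with one of them — a contradiction. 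The second statement is word‑for‑word the same with $z$ (and $M_{zu}$, $M_{zv}$) in place of $x$.

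The analogous statements for the pairs $(\beta,\alpha)$, $(\gamma,\alpha)$, $(\delta,\alpha)$ follow by the symmetry interchanging $u$ and $v$ (equivalently, interchanging the matchings $M_{wu}$ and $M_{wv}$): now $\alpha$ is the fixed $v$‑color while $\beta,\gamma,\delta$ are distinct $u$‑colors, so their neighbors in $L(w)$ under $M_{wu}$ are distinct and exhaust $L(w)$, again forcing $\alpha$'s neighbor to collide with one of them. I expect no genuine obstacle here; the only point demanding care is the bookkeeping that $\ell_\phi(w)=1$ really does force both matched neighbors to exist and be distinct, which is precisely what lets the three‑into‑three pigeonhole close.
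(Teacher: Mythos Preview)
Your proof is correct and follows essentially the same approach as the paper: both arguments observe that type $1$-$\ast$ forces $\alpha$ and each of $\beta,\gamma,\delta$ to have distinct neighbors in $L(x)$, so the three $v$-neighbors exhaust $L(x)$ and collide with $\alpha$'s neighbor (and similarly for $z$ in the $\ast$-$1$ case). The paper simply unrolls the pigeonhole step by step with explicit labels $x_1,x_2,x_3$, while you state it once abstractly; the content is identical.
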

\begin{proof}
    Suppose for contradiction that $(\alpha, \beta)$, $(\alpha, 
\gamma)$, and $(\alpha, \delta)$ are of type 1-$i$, 1-$j$, and 1-$k$, respectively, for some $i,j,k \in [2]$. Let $L(x) = \{x_1, x_2, x_3\}$. Since $(\alpha, \beta)$ is type 1-$i$, we may assume without loss of generality that $\alpha \sim x_1$ and $\beta \sim x_2$. By the matching condition, it follows $\gamma \not\sim x_2$, and thus, as $(\alpha, \gamma)$ is of type 1-$j$, it follows that $\gamma \sim x_3$. Thus $\delta \sim x_1$, contradicting $(\alpha, \delta$ is of type 1-$k$. A similar proof shows it is not the case that $(\alpha, \beta)$, $(\alpha, 
\gamma)$, and $(\alpha, \delta)$ are of type $i$-1, $j$-1, and $k$-1, respectively, for some $i,j,k \in [2]$. A similar proof also shows the furthermore statement.
\end{proof}

We can now prove the lemma of this section.
\begin{lemma}
   $T$ is bad for at most 6 pairs in $L(u) \times L(v)$.
\end{lemma}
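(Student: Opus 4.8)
The plan is to bound the number of bad pairs $(\alpha,\beta)\in L(u)\times L(v)$ by analyzing how bad pairs distribute across the $3\times 3$ grid indexed by $L(u)\times L(v)$, using the three structural lemmas just proved (Lemmas \ref{lemma: not two 1-2's}, \ref{lemma: not two twists}, \ref{lemma: not all 1's}) as forbidden-configuration constraints. Recall each bad pair carries a \emph{type} in $\{2\text{-}1,\,1\text{-}1,\,1\text{-}2\}$ recording the pattern of list sizes $(\ell_\phi(x),\ell_\phi(z),\ell_\phi(y))$ at the vertices $x,z,y$. The key observation is that the type of a pair is governed by which of $\alpha,\beta$ has a common neighbor with a color at $x$, $z$, or $y$: a pair is of type 2-1 when $\alpha$ (the color at $u$) conflicts with some color at $x$ forcing $\ell_\phi(x)=2$ but $\ell_\phi(z)=1$, type 1-2 when the reduction happens on the $y$-side, and type 1-1 when both $\ell_\phi(x)=\ell_\phi(z)=\ldots=1$. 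This ties the type to the identities of $\alpha$ and $\beta$, which is exactly what makes the matching condition bite.

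First I would set up the counting by rows and columns of the grid. For a fixed $\alpha\in L(u)$, consider the (at most three) values $\beta$ for which $(\alpha,\beta)$ is bad. Lemma \ref{lemma: not two 1-2's} says that among these, at most one is type 1-2 and at most one is type 2-1; Lemma \ref{lemma: not all 1's} says that the three types in a single row cannot all lie in $\{1\text{-}1,1\text{-}2\}$ (the ``1-$\ast$'' family) nor all in $\{2\text{-}1,1\text{-}1\}$ (the ``$\ast$-1'' family). Combining these, I would argue that no row (fixed $\alpha$) can contain three bad pairs: three bad pairs in a row would need at least two of the same broad family after excluding duplicate 1-2's and 2-1's, and the ``not all'' lemma rules out the remaining possibilities. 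Hence each of the three rows has at most two bad pairs, giving the crude bound of $6$. The symmetric ``furthermore'' clauses of the same lemmas (phrased for $(\beta,\alpha)$) give the identical statement for columns, which I would keep in reserve to eliminate edge cases.

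The subtle point — and the step I expect to be the main obstacle — is handling type 1-1 pairs, since they are \emph{not} restricted by Lemmas \ref{lemma: not two 1-2's} or \ref{lemma: not two twists}, so a priori a whole row could be three 1-1's. Here is where Lemma \ref{lemma: not all 1's} does the real work: I would show that a 1-1 pair forces $\alpha$ to conflict with a specific color at $x$ \emph{and} $\beta$ to conflict with a specific color at $y$, and then run the same matching-condition pigeonhole on $L(x)=\{x_1,x_2,x_3\}$ that appears in the proof of Lemma \ref{lemma: not all 1's}. Three pairs sharing $\alpha$ and each forcing a distinct element of $L(x)$ (forced distinct by the matching condition) must exhaust all of $L(x)$, and then the cyclic structure forces a repeated conflict, the contradiction. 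I would phrase the row bound as: the three bad pairs in a row, if they existed, would have types drawn from $\{2\text{-}1,1\text{-}1,1\text{-}2\}$ with at most one 2-1 and at most one 1-2, so at least two would be forced into one family, and Lemma \ref{lemma: not all 1's} contradicts three being in a common family. Thus at most two bad pairs per row and $3\cdot 2 = 6$ in total.

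One care point to flag in the writeup: Lemma \ref{lemma: not two twists} concerns \emph{twisted} configurations across two pairs $(\alpha,\beta)$ and $(\delta,\gamma)$ with all four colors distinct, i.e.\ an ``anti-diagonal'' constraint rather than a row constraint. I would use it only if the naive row-and-column bound of $6$ needs sharpening in a degenerate arrangement where bad pairs avoid sharing any coordinate; in the generic argument above it is not strictly needed to reach $6$, but I would mention it to confirm the bound is tight and cannot be pushed below $6$ by these lemmas alone. The cleanest organization is therefore: (1) reduce to ``at most two bad pairs in each row,'' (2) prove this by the type-family dichotomy plus Lemmas \ref{lemma: not two 1-2's} and \ref{lemma: not all 1's}, and (3) sum over the three rows to conclude $T$ is bad for at most $6$ pairs.
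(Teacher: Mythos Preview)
Your row-by-row bound has a genuine gap. You argue that three bad pairs in a single row are impossible by combining Lemma~\ref{lemma: not two 1-2's} (at most one type $1\text{-}2$ and at most one type $2\text{-}1$ in a row) with Lemma~\ref{lemma: not all 1's} (not all three in the family $\{1\text{-}1,1\text{-}2\}$ and not all three in $\{2\text{-}1,1\text{-}1\}$). But these constraints do \emph{not} exclude the type pattern $(2\text{-}1,\,1\text{-}1,\,1\text{-}2)$ in a row: it has exactly one $2\text{-}1$, exactly one $1\text{-}2$, only two members of each ``broad family,'' and hence violates neither lemma. Your sentence ``at least two would be forced into one family, and Lemma~\ref{lemma: not all 1's} contradicts three being in a common family'' slips from \emph{two} in a family to \emph{three}; that inference is simply invalid. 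So the claim ``at most two bad pairs per row'' is not established, and the $3\cdot 2=6$ count collapses.

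Relatedly, your assessment that Lemma~\ref{lemma: not two twists} ``is not strictly needed to reach $6$'' is wrong: the paper's proof uses it essentially. The paper does \emph{not} bound rows individually. Instead it assumes seven bad pairs, so that after relabeling some full row and some full column of the $3\times 3$ grid are bad (say the pairs $(1,6),(2,6),(3,6)$ and $(3,4),(3,5),(3,6)$). Lemma~\ref{lemma: not all 1's} forces a type-$1\text{-}2$ pair somewhere in the full column and somewhere in the full row. The anti-diagonal constraint of Lemma~\ref{lemma: not two twists} (two bad pairs with all four coordinates distinct cannot both be type $1\text{-}2$) is then used, together with Lemma~\ref{lemma: not two 1-2's}, to pin the unique type-$1\text{-}2$ pair at the intersection $(3,6)$; a second application of Lemmas~\ref{lemma: not all 1's} and~\ref{lemma: not two twists} then forces the column $(1,6),(2,6),(3,6)$ to be entirely of type $1\text{-}j$, contradicting Lemma~\ref{lemma: not all 1's}. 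The interplay between row, column, and anti-diagonal constraints is what makes the argument work; a purely local row bound does not.
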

\begin{proof}
    Without loss of generality, assume $L(u) = \{1, 2, 3\}$ and $L(v) = \{4, 5, 6\}$. Suppose $T$ is bad for at least 7 pairs in $L(u) \times L(v)$; choose a subset $S$ of size 7 of these pairs, and let $S'$ be the remaining 2 pairs. Either the two pairs of $S'$ have a coordinate in common, or they do not. Therefore, by relabeling, we may without loss of generality assume that either $S' = \{(1, 4), (1, 5)\}$ or $S' = \{(1, 4), (2,5)\}$, and, in both cases, we have that $\{(1, 6), (2, 6), (3,4), (3, 5), (3, 6)\} \subseteq S$. 

    By Lemma \ref{lemma: not all 1's}, there does not exist $i, j, k \in \{1, 2\}$ such that $(1,6)$, $(2, 6)$, $(3, 6)$ are of type $i$-1, $j$-1, and $k$-1, respectively, and thus at least one of $\{(1,6), (2, 6), (3, 6)\}$ is of type 1-2. 
    Again by Lemma \ref{lemma: not all 1's}, it follows one of $\{(3,4), (3,5), (3,6)\}$ is of type 1-2.

    Suppose $(1,6)$ is of type 1-2. Then by Lemma \ref{lemma: not two twists}, it follows $(3,4)$ and $(3,5)$ are not of type 1-2. But as at least one $\{(3, 4), (3, 5), (3,6)\}$ is of type 1-2, it follows $(3,6)$ is of type 1-2. However, by Lemma \ref{lemma: not two 1-2's}, it cannot be that $(1,6)$ and $(3,6)$ are both of type 1-2, a contradiction. Thus $(1,6)$ is not of type 1-2. 
    A similar argument shows $(2,6)$ is not type 1-2. 
    And as at least one of $\{(1,6), (2,6), (3,6)\}$ is of type 1-2, it follows $(3,6)$ is type 1-2.

    By Lemma \ref{lemma: not all 1's}, since $(3,6)$ is of type 1-2, it follows at least one of $\{(3,4), (3,5)\}$ is of type 2-1. By Lemma \ref{lemma: not two twists}, it follows $(1,6)$ and $(2,6)$ are not of type 2-1, and thus are of type 1-$i$ and 1-$j$, respectively, for some $i, j \in \{1, 2\}$. But then $(1,6), (2,6), (3,6)$ are of types 1-$i$, 1-$j$, 1-2, respectively, contradicting Lemma \ref{lemma: not all 1's}. Therefore, it is not the case that $T$ is bad for at least 7 pairs in $L(u) \times L(v)$.
\end{proof}

\section{Proof of Lemma \ref{Lemma: good}}\label{Section: proof of good}

We explain below that it will suffice to prove the following:

\begin{lemma}\label{Lemma: good simple}
Let $\H$ be a correspondence cover for $T \setminus \{u, v\}$ with $\ell(w) = 2$ for all $w \in \{u_1, u_2, v_1, v_2\}$ and $1 \leq \ell(x), \ell(y), \ell(z) \leq 2$. If $\mathcal{H}$ is \textit{good}, then there exists a $1$-defective $\mathcal{H}$-coloring of $T \setminus \{u, v\}$.
\end{lemma}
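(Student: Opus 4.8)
The plan is to exploit the fact that the two diamonds $R_1 = T[x,u_1,v_1,z]$ and $R_2 = T[z,u_2,v_2,y]$ of $T\setminus\{u,v\}$ meet only in the vertex $z$. So I would first assign a color to $z$ and then color $R_1$ and $R_2$ independently using the single-diamond Lemmas \ref{Lemma: 1-1 not twisted or wedged}, \ref{Lemma: 1-2}, and \ref{lemma: not twisted and wedged}. Since the diamonds overlap only in $z$ and share no edges, the sole obstruction to combining two $1$-defective colorings is that the conflicts at $z$ contributed by $R_1$ and by $R_2$ might add up; hence it suffices to produce $1$-defective colorings of the two diamonds whose defects at $z$ sum to at most $1$. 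Throughout I would use the symmetry of $R = K_4 - ac$ interchanging the nonadjacent corners $a$ and $c$: in each diamond I am free to treat whichever corner currently carries a single color as the apex $a$ and the other as $c$.

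I would organize the argument by the list-size profile $(\ell(x),\ell(z),\ell(y))$. When $\ell(z)=1$ the color of $z$ is forced and each $R_i$ becomes a cover with $z$ as a size-$1$ apex; I then apply Lemma \ref{Lemma: 1-1 not twisted or wedged} to a diamond whose other corner has size $1$ and Lemma \ref{Lemma: 1-2} to one whose other corner has size $2$. In each of these profiles (the $1$-, $2$-, $3$-, and $5$-bad shapes) the defect at $z$ in a given diamond can be forced to $0$ unless that diamond is twisted (Lemma \ref{Lemma: 1-1 not twisted or wedged}) or wedged (the ``furthermore'' clauses of Lemmas \ref{Lemma: 1-1 not twisted or wedged} and \ref{Lemma: 1-2}). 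The definition of \emph{good} is engineered precisely so that at least one of $R_1,R_2$ escapes being twisted/wedged, letting me take $\defect(z)=0$ there while the other diamond contributes at most $1$. When $\ell(z)=2$ but $x,y$ are not both of size $1$, I would reduce a size-$2$ corner to a single color if necessary (via Remark \ref{remark: simplify cover}) and then, on the diamond whose non-$z$ corner now has size $1$, apply Lemma \ref{Lemma: 1-2}(i) with $z$ playing the role of the size-$2$ corner $c$; this fixes a color of $z$ with $\defect(z)=0$ on that side, and the opposite diamond contributes at most $1$.

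The crux is the profile $(\ell(x),\ell(z),\ell(y))=(1,2,1)$, the $4$-bad shape, where both corners $x,y$ have size $1$ and $z$ has two colors $z_1,z_2$. Here I cannot simply force $\defect(z)=0$ on one side and read off $z$'s color, since the color of $z$ preferred by $R_1$ and by $R_2$ may differ. I would first apply Lemma \ref{Lemma: 1-2}(i) to each diamond (viewing $x$, respectively $y$, as the apex and $z$ as the size-$2$ corner $c$), obtaining $1$-defective colorings with $\defect(z)=0$; if they agree on $z$, they combine immediately. Otherwise they use the two distinct colors, say $R_1$ uses $z_1$ and $R_2$ uses $z_2$. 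I then commit to $z=z_1$, keeping the $R_1$-coloring (which has $\defect_{R_1}(z)=0$); now $R_2\mid_{z=z_1}$ has both corners of size $1$, so Lemma \ref{Lemma: 1-1 not twisted or wedged} colors it with $\defect_{R_2}(z)\le1$ provided it is not twisted. Symmetrically, committing to $z=z_2$ with the $R_2$-coloring works provided $R_1\mid_{z=z_2}$ is not twisted — and Lemma \ref{lemma: not twisted and wedged} makes this transparent, since it says that if $R_i\mid_{z=z_j}$ is twisted then $R_i$ has a $0$-defect coloring using the other color of $z$. The only way both commitments can fail is if $R_2\mid_{z=z_1}$ and $R_1\mid_{z=z_2}$ are \emph{both} twisted, which is exactly the $4$-bad condition and so contradicts goodness. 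This interplay between the twistedness of the two diamonds under opposite colorings of $z$ is the main obstacle, and it is where the precise wording of the $4$-bad case and Lemma \ref{lemma: not twisted and wedged} do their work.
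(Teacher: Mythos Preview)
Your proposal is correct and takes essentially the same approach as the paper: a case analysis by the profile $(\ell(x),\ell(z),\ell(y))$, coloring $R_1$ and $R_2$ separately via Lemmas~\ref{Lemma: 1-1 not twisted or wedged}, \ref{Lemma: 1-2}, and \ref{lemma: not twisted and wedged} so that the defects at $z$ sum to at most $1$ (this is exactly the content of the paper's Lemmas~\ref{Lemma: 1-2-2}--\ref{lemma: 1-1-1 not bad}). The only cosmetic difference is in the $(1,2,1)$ case, where the paper first branches on whether some $\H(R_i)\setminus\{z_j\}$ is twisted and invokes Lemma~\ref{lemma: not twisted and wedged} directly, whereas you first color both diamonds via Lemma~\ref{Lemma: 1-2}\ref{Lemma: 1-2 coloring} and only fall back on the twistedness test when the two $z$-values disagree; both arrive at the same contradiction with the $4$-bad condition.
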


  To see why this suffices to prove Lemma \ref{Lemma: good}, assume $\H_\phi$ in the statement of Lemma \ref{Lemma: good} is good. Then by Lemma \ref{Lemma: good simple}, there exists a $1$-defective $\H_\phi$-coloring of $T \setminus \{u,v\}$, say $\psi_0$. Take $\psi \defeq \psi_0 \cup \phi$. Since, by definition, $\im(\psi_0)$ does not include neighbors of $\phi(u)$ or $\phi(v)$, it follows $\psi$ is a $1$-defective $\H$-coloring of $T$ with $\defect_\psi(u)= \defect_\psi(v) = 0$.

  We therefore prove Lemma \ref{Lemma: good simple}. For convenience, in the rest of this section, let $T' \defeq T \setminus \{u, v\}$. Let $\H$ be a correspondence cover for $T'$ such that $\ell(w) = 2$ for all $w \in \{u_1, u_2, v_1, v_2\}$ and $1 \leq \ell(x), \ell(y), \ell(z) \leq 2$, and, furthermore, assume $\H$ is good. The proof of Lemma \ref{Lemma: good simple} then follow easily from the series of lemmas below, which cover all possible cases for the values of $\ell(x)$, $\ell(z)$, and $\ell(y)$.

\begin{lemma}\label{Lemma: 1-2-2}
     If $\ell(x) \geq 1$, $\ell(z) = 2$, and $\ell(y) = 2$, then there exists an $\H$ coloring of $T'$.
\end{lemma}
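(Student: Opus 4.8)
The plan is to exploit the fact that $T' = R_1 \cup R_2$, where $R_1 = T[x,u_1,v_1,z]$ and $R_2 = T[z,u_2,v_2,y]$ meet only in the vertex $z$. Since every vertex other than $z$ lies in exactly one of $R_1$, $R_2$, a coloring $\phi$ of $T'$ is $1$-defective precisely when its restrictions to $R_1$ and to $R_2$ are each $1$-defective colorings of a copy of $R$ that agree on $\phi(z)$ and satisfy $\defect_{R_1}(z) + \defect_{R_2}(z) \leq 1$. Thus it suffices to pick a color $z^* \in L(z)$ together with $1$-defective colorings of $R_1$ and $R_2$ using $\phi(z) = z^*$ so that at least one side contributes no defect to $z$ while the other contributes at most one.

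First I would color $R_2$, which is the flexible side because $\ell(y) = 2$. Viewing $R_2$ as a copy of $R$ with $z$ in the role of $a$ and $y$ in the role of $c$, fixing $\phi(z) = z^*$ produces the cover $\H(R_2)$ restricted to $z^*$, which has $\ell(a) = 1$ and $\ell(b) = \ell(c) = \ell(d) = 2$. Lemma \ref{Lemma: 1-2}\ref{Lemma: 1-2 coloring} then guarantees a $1$-defective coloring of $R_2$ (so $\defect_{R_2}(z) \leq 1$ automatically), and Lemma \ref{Lemma: 1-2}\ref{Lemma: 1-2 not wedged} upgrades this to $\defect_{R_2}(z) = 0$ whenever the restricted cover is not wedged. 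For $R_1$ I would split on $\ell(x)$: if $\ell(x) = 2$, the same two parts of Lemma \ref{Lemma: 1-2} apply with $z$ as $a$ and $x$ as $c$; if $\ell(x) = 1$, then after fixing $\phi(z) = z^*$ the cover $\H(R_1)$ restricted to $z^*$ falls under Lemma \ref{Lemma: 1-1 not twisted or wedged}, giving a $1$-defective coloring whenever it is not twisted and a coloring with $\defect_{R_1}(z) = 0$ whenever it is not wedged.

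The heart of the argument is then a bookkeeping over the two colors of $z$. For each $z^* \in L(z)$ I record whether the induced restrictions of $\H(R_1)$ and $\H(R_2)$ are twisted and/or wedged, and I would choose $z^*$ so that one side is colorable with zero defect at $z$ and the other remains $1$-defectively colorable. When $\ell(x) = 1$ I would first use the matching condition, in the spirit of the proof of Lemma \ref{lemma: not two twists}, to rule out that both restrictions of $\H(R_1)$ are twisted, guaranteeing that $R_1$ is colorable for at least one choice of $z^*$; Lemma \ref{lemma: not twisted and wedged} is also available to convert a twisted restriction on one $z$-color into a non-wedged, $0$-defectively colorable restriction on the other.

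The main obstacle I anticipate is the case in which, for both choices of $z^*$, the restrictions of both $\H(R_1)$ and $\H(R_2)$ are wedged, so that neither ``not wedged'' clause applies and each side appears forced to contribute a defect to $z$. Resolving this requires unpacking the explicit wedged structure of Definition \ref{wedged 2}: I would use that a wedged cover pins down the relevant matchings up to relabeling, and then exhibit by hand a choice of $z^*$ together with colors of $u_1, v_1, x$ and $u_2, v_2, y$ that keeps the total defect at $z$ to at most one. This direct construction, carried out uniformly across the values of $\ell(x)$, is where the bulk of the care lies; the remaining cases follow routinely from the lemmas quoted above.
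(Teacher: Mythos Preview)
Your decomposition of $T'$ into $R_1$ and $R_2$ is correct, but you have missed the key simplification that makes this lemma a two-line argument. By fixing a color $z^*$ \emph{before} coloring $R_1$, you force $z$ into the role of the size-$1$ vertex $a$ on \emph{both} sides. Getting $\defect(z)=0$ on either side then requires the ``not wedged'' clause of Lemma~\ref{Lemma: 1-2}\ref{Lemma: 1-2 not wedged} (or Lemma~\ref{Lemma: 1-1 not twisted or wedged}), which is exactly what drives you into the unresolved ``both wedged for both $z^*$'' case. That case is not handled in your proposal beyond a promise to unpack the structure by hand, so as written there is a genuine gap.

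The paper avoids all of this by reversing the roles on the $R_1$ side. After reducing to $\ell(x)=1$ without loss of generality, view $R_1$ with $x$ as $a$ and $z$ as $c$; since $\ell(z)=2$, Lemma~\ref{Lemma: 1-2}\ref{Lemma: 1-2 coloring} applies \emph{unconditionally} and yields a $1$-defective $\H(R_1)$-coloring $\phi$ with $\defect_\phi(z)=0$, together with a choice of $\phi(z)$. Only then do you pass to $R_2$: delete the unused color $\xi \in L(z)\setminus\{\phi(z)\}$, so $z$ now plays $a$ with list size $1$ and $y$ plays $c$ with list size $2$, and Lemma~\ref{Lemma: 1-2}\ref{Lemma: 1-2 coloring} gives some $1$-defective $\H(R_2)\setminus\{\xi\}$-coloring $\psi$. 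Since $\defect_\phi(z)=0$, the union $\phi\cup\psi$ is $1$-defective on $T'$. No bookkeeping over two values of $z^*$, no wedged/twisted dichotomy, and no hand-built rescue coloring are needed: the freedom in $L(z)$ is spent on the $R_1$ side rather than discarded up front.
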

\begin{proof}
    Without loss of generality, we may assume $\ell(x) = 1$ by removing a color from $L(x)$ if necessary.
   By Lemma \ref{Lemma: 1-2}\ref{Lemma: 1-2 coloring}, there exists a $1$-defective $\H(R_1)$ coloring $\phi$ of $R_1$ with $\defect_{\phi}(z) = 0$. Let $\xi \in L(z) \setminus \{\phi(z)\}$.
Then by Lemma \ref{Lemma: 1-2}\ref{Lemma: 1-2 coloring}, there exists a $1$-defective $\mathcal{H}(R_2)\setminus\{\xi\}$ coloring $\psi$ of $R_2$. By definition, $\phi(z) = \psi(z)$, and as $\defect_\phi(z) = 0$, it follows $\phi \cup \psi$ is a $1$-defective $\H$-coloring of $T'$.
\end{proof}

\begin{lemma}\label{lemma: 2 - 2 - 1}
   If $\ell(x) = 2$, $\ell(z) = 2$, and $\ell(y)  \geq 1$, then there exists an $\H$ coloring of $T'$.
\end{lemma}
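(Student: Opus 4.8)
The plan is to color the two copies $R_1$ and $R_2$ separately and glue them along the shared vertex $z$, arranging the defect at $z$ to split as $0 + 1$. First observe that the goodness hypothesis is vacuous in this case: no bad configuration in Definition \ref{def: good and bad} has $\ell(x) = \ell(z) = 2$, so every such cover is automatically good, and we need only produce a $1$-defective coloring. By Remark \ref{remark: simplify cover} we may delete a color from $L(y)$ if necessary and assume $\ell(y) = 1$.

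Next I would handle $R_2 = T[z, u_2, v_2, y]$. Its non-adjacent pair is $\{z, y\}$, and since $R \cong K_4$ minus $ac$ admits the automorphism exchanging $a$ and $c$, I may treat the size-$1$ vertex $y$ as the ``$a$'' of Lemma \ref{Lemma: 1-2} and $z$ as its ``$c$''. Lemma \ref{Lemma: 1-2}\ref{Lemma: 1-2 coloring} then yields a $1$-defective $\H(R_2)$-coloring $\psi$ with $\defect_\psi(z) = 0$; write $\zeta \defeq \psi(z)$.

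Then I would handle $R_1 = T[x, u_1, v_1, z]$ with $z$ frozen to $\zeta$: delete the other color of $z$ to obtain a subcover in which $\ell(z) = 1$ while $\ell(x) = \ell(u_1) = \ell(v_1) = 2$. Again using the $a \leftrightarrow c$ symmetry, treat $z$ as the size-$1$ ``$a$'' and $x$ as ``$c$'', and apply Lemma \ref{Lemma: 1-2}\ref{Lemma: 1-2 coloring} to obtain a $1$-defective coloring $\phi$ of $R_1$ with $\phi(z) = \zeta$. Since this coloring is $1$-defective on all of $R_1$, in particular $\defect_\phi(z) \leq 1$ within $R_1$.

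Finally, I would glue the two colorings. The only vertex shared by $R_1$ and $R_2$ is $z$, and the neighbors of every other vertex lie entirely within its own copy, so $\phi \cup \psi$ is automatically $1$-defective away from $z$. At $z$ the total defect is $\defect_\phi(z) + \defect_\psi(z) \leq 1 + 0 = 1$, whence $\phi \cup \psi$ is a $1$-defective $\H$-coloring of $T'$. The only point requiring care is the bookkeeping of the symmetry of $R$ together with the splitting of the defect at $z$ as $0$ (contributed by $R_2$) plus at most $1$ (contributed by $R_1$); once the size-$1$ end is placed on the correct side in each application of Lemma \ref{Lemma: 1-2}, the rest of the argument is routine.
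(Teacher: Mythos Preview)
Your proof is correct and is exactly the argument the paper has in mind: the paper's own proof consists of the single sentence ``This is the symmetric case to the statement of Lemma \ref{Lemma: 1-2-2}, and the proof follows similarly,'' and what you have written is precisely that symmetric argument spelled out---reduce to $\ell(y)=1$, apply Lemma \ref{Lemma: 1-2}\ref{Lemma: 1-2 coloring} to $R_2$ (with $y$ playing the role of $a$) to get $\defect_\psi(z)=0$, then apply it again to $R_1$ with $z$ frozen, and glue. Your observation that the goodness hypothesis is vacuous here is also correct.
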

\begin{proof}
    This is the symmetric case to the statement of Lemma \ref{Lemma: 1-2-2}, and the proof follows similarly.
\end{proof}

\begin{lemma}\label{lemma: 2-1-2 not bad}
   If $\ell(x)  = 2$, $\ell(z) = 1$, and $\ell(y)  = 2$, then there exists an $\H$ coloring of $T'$.
\end{lemma}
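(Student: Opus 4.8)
The plan is to use the decomposition $T' = R_1 \cup R_2$, where $R_1 = T[x, u_1, v_1, z]$ and $R_2 = T[z, u_2, v_2, y]$ meet only in the vertex $z$. First I would pin down the roles the vertices play under the isomorphisms $R_1 \cong R \cong R_2$. In each gadget the non-adjacent pair is $\{x,z\}$ (resp.\ $\{z,y\}$), and in each case $z$ is the vertex of list size $1$; hence $z$ plays the role of $a$ in \emph{both} $R_1$ and $R_2$, while $x$ and $y$ (each of list size $2$) play the role of $c$. Consequently both $\H(R_1)$ and $\H(R_2)$ are covers of the type governed by Lemma \ref{Lemma: 1-2} (with $\ell(a)=1$, $\ell(b)=\ell(c)=\ell(d)=2$), and the relevant notion of ``wedged'' is that of Definition \ref{wedged 2}.

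Next I would invoke goodness. The present hypotheses ($\ell(x)=2$, $\ell(z)=1$, $\ell(y)=2$) are exactly the $1$-bad configuration, so $\H$ being good means precisely that $\H(R_1)$ and $\H(R_2)$ are \emph{not both} wedged. Using the symmetry of $T'$ that exchanges $R_1 \leftrightarrow R_2$ (that is, $x \leftrightarrow y$, $u_1 \leftrightarrow u_2$, $v_1 \leftrightarrow v_2$, fixing $z$), I may assume without loss of generality that $\H(R_1)$ is not wedged, and run the identical argument with the two gadgets interchanged in the other case.

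I would then color in two stages, arranging to spend no defect at $z$ on the $R_1$ side. Since $\H(R_1)$ is not wedged, Lemma \ref{Lemma: 1-2}\ref{Lemma: 1-2 not wedged} yields a $1$-defective $\H(R_1)$-coloring $\phi$ with $\defect_\phi(z) = 0$ (recall $z$ is the vertex $a$). Because $\ell(z)=1$, the value $\phi(z)$ is forced to the unique color of $L(z)$. Applying Lemma \ref{Lemma: 1-2}\ref{Lemma: 1-2 coloring} to $\H(R_2)$ produces a $1$-defective $\H(R_2)$-coloring $\psi$, and since $\ell(z)=1$ this $\psi$ necessarily assigns $z$ the same forced color, so $\phi(z)=\psi(z)$ and $\phi \cup \psi$ is a well-defined $\H$-coloring of $T'$.

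Finally I would verify $1$-defectiveness of $\phi \cup \psi$. Every vertex of $R_1 \setminus \{z\}$ has all its $T'$-neighbors inside $R_1$ and hence defect $\le 1$ from $\phi$; likewise each vertex of $R_2 \setminus \{z\}$ has defect $\le 1$ from $\psi$. The only shared vertex is $z$, whose total defect is $\defect_\phi(z) + \defect_\psi(z) = 0 + \defect_\psi(z) \le 1$, using that $\psi$ is $1$-defective on $R_2$. The only real subtlety—and the exact reason goodness is needed—is this defect bookkeeping at $z$: we must force $\defect(z)=0$ on one side so as to leave room for the single unavoidable defect contributed by the other side, which is possible only when at least one of $\H(R_1)$, $\H(R_2)$ is unwedged.
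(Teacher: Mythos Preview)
Your proof is correct and follows the same approach as the paper: use goodness to assume WLOG that $\H(R_1)$ is not wedged, apply Lemma~\ref{Lemma: 1-2}\ref{Lemma: 1-2 not wedged} to color $R_1$ with $\defect_\phi(z)=0$, apply Lemma~\ref{Lemma: 1-2}\ref{Lemma: 1-2 coloring} to color $R_2$, and glue at $z$. Your write-up is simply more explicit about the role identifications and the defect bookkeeping at $z$ than the paper's version.
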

\begin{proof}
    Since $\H$ is good, we may assume without loss of generality that $\H(R_1)$ is not wedged (the case when $\H(R_2)$ is not wedged is symmetric). Then by Lemma \ref{Lemma: 1-2}\ref{Lemma: 1-2 not wedged}, there exists a $1$-defective $\H(R_1)$ coloring $\phi$ of $R_1$ such that $\defect_{\phi}(z) = 0$. By Lemma \ref{Lemma: 1-2}\ref{Lemma: 1-2 coloring}, there exists a $1$-defective $\H(R_2)$ coloring $\psi$ of $R_2$.
    By definition, $\phi(z) = \psi(z)$, and as $\defect_\phi(z) = 0$, it follows $\phi \cup \psi$ is a $1$-defective $\H$-coloring of $T'$.
\end{proof}

\begin{lemma}\label{lemma: 1-1-2 not bad}
   If $\ell(x) = 1$, $\ell(z) =1$, and $\ell(y) = 2$, then there exists an $\H$ coloring of $T'$.
\end{lemma}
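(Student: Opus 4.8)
The plan is to color the two halves $R_1$ and $R_2$ of $T' = T\setminus\{u,v\}$ separately, force them to agree on the shared vertex $z$, and arrange for the defect at $z$ to split as $0 + (\text{at most }1)$. Since $\ell(x) = \ell(z) = 1$, the sub-cover $\H(R_1)$ falls under Lemma~\ref{Lemma: 1-1 not twisted or wedged} (with $x$ and $z$ as the two list-size-one vertices), and since $\ell(z) = 1$, $\ell(y) = 2$, the sub-cover $\H(R_2)$ falls under Lemma~\ref{Lemma: 1-2} with $z$ in the role of the distinguished vertex $a$. As $\H$ is good it is in particular not (2-bad), so $\H(R_1)$ is not twisted and at least one of $\H(R_1)$, $\H(R_2)$ is not wedged. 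I would split into cases according to which sub-cover is unwedged, always using the unwedged side to pin the defect at $z$ to zero.

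First suppose $\H(R_2)$ is not wedged. Then Lemma~\ref{Lemma: 1-2}\ref{Lemma: 1-2 not wedged} applied to $R_2$ yields a $1$-defective $\H(R_2)$-coloring $\psi$ with $\defect_\psi(z) = 0$, while Lemma~\ref{Lemma: 1-1 not twisted or wedged} applied to $R_1$ (which is not twisted) yields a $1$-defective $\H(R_1)$-coloring $\phi$, so in particular $\defect_\phi(z) \leq 1$. Because $\ell(z) = 1$ the two colorings are forced to assign the same color to $z$, so $\phi \cup \psi$ is well defined, and the defect of $z$ in $T'$ is $\defect_\phi(z) + \defect_\psi(z) \leq 1 + 0 = 1$. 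Every other vertex of $T'$ lies in exactly one of $R_1$, $R_2$, and so already has defect at most $1$; hence $\phi \cup \psi$ is the desired $1$-defective $\H$-coloring of $T'$.

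Now suppose instead that $\H(R_1)$ is not wedged. Here I would reverse the roles: since $\H(R_1)$ is neither twisted nor wedged, Lemma~\ref{Lemma: 1-1 not twisted or wedged} produces a $1$-defective $\H(R_1)$-coloring $\phi$ with $\defect_\phi(z) = 0$ (this is the conclusion $\defect(a) = 0$ of that lemma, read with $z$ as the vertex $a$). Lemma~\ref{Lemma: 1-2}\ref{Lemma: 1-2 coloring} then supplies a $1$-defective $\H(R_2)$-coloring $\psi$ unconditionally, so $\defect_\psi(z) \leq 1$. As before the colorings agree at $z$ and the total defect there is at most $0 + 1 = 1$, so $\phi \cup \psi$ works.

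The one point requiring care is the defect bookkeeping at $z$: the whole argument hinges on letting the \emph{unwedged} sub-cover be the one that contributes $0$ to $\defect(z)$, so that the other (possibly wedged) side is only asked for an ordinary $1$-defective coloring. This is clean because in both $R_1$ and $R_2$ the shared vertex $z$ plays the role of the distinguished vertex $a$ appearing in Lemmas~\ref{Lemma: 1-1 not twisted or wedged} and~\ref{Lemma: 1-2}, so ``not wedged'' is exactly the hypothesis needed to force zero defect at $z$; moreover $z$ is singly colored, so gluing $\phi$ and $\psi$ never creates a conflict. I expect no further obstacle, since all remaining vertices are internal to a single half and the two lemmas invoked are precisely tailored to the list sizes $\ell(x)=\ell(z)=1$, $\ell(y)=2$ arising here.
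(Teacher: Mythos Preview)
Your proof is correct and matches the paper's own argument essentially line for line: both split into the two cases according to which of $\H(R_1)$, $\H(R_2)$ is unwedged, use the unwedged side to force $\defect(z)=0$ via Lemma~\ref{Lemma: 1-1 not twisted or wedged} or Lemma~\ref{Lemma: 1-2}\ref{Lemma: 1-2 not wedged}, and color the other side with the weaker conclusion. The only cosmetic difference is that you present the cases in the opposite order.
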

\begin{proof}
    Since $\H$ is good, it follows $\H(R_1)$ is not twisted and at least one of $\H(R_1)$ and $\H(R_2)$ is not wedged.

    Case 1: $\H(R_1)$ is not wedged. Then by Lemma \ref{Lemma: 1-1 not twisted or wedged}, there exists a $1$-defective $\H(R_1)$ coloring $\phi$ of $R_1$ with $\defect_\phi(z) = 0$. By Lemma \ref{Lemma: 1-2}\ref{Lemma: 1-2 coloring}, there exists a $1$-defective $\H(R_2)$ coloring $\psi$ of $R_2$. Since $\phi(z) = \psi(z)$ and $\defect_\phi(z) = 0$, it follows $\phi \cup \psi$ is a $1$-defective $H$-coloring of $T'$.
    
    Case 2: $\H(R_2)$ is not wedged. Then by Lemma \ref{Lemma: 1-2}\ref{Lemma: 1-2 not wedged}, there exists a $1$-defective $\H(R_2)$ coloring $\psi$ of $R_2$ with $\defect_{\psi}(z) = 0$. Since $\H(R_1)$ is not twisted, by Lemma \ref{Lemma: 1-1 not twisted or wedged}, there exists a $1$-defective $\H(R_1)$ coloring $\phi$ of $R_1$. Since $\phi(z) = \psi(z)$ and since $\defect_{\psi}(z) = 0$, it follows $\phi \cup \psi$ is a $1$-defective $\H$ coloring of $T'$.    
\end{proof}

\begin{lemma}\label{lemma: 2-1-1 not bad}
   If $\ell(x) = 2$, $\ell(z) =1$, and $\ell(y) = 1$, then there exists an $\H$ coloring of $T'$.
\end{lemma}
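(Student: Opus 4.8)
The plan is to prove Lemma \ref{lemma: 2-1-1 not bad} by exploiting the asymmetry of the hypothesis: here $\ell(x) = 2$ and $\ell(y) = 1$, so $R_1$ (which contains $x$) has the richer list structure while $R_2$ (which contains $y$) is the constrained side. This is the mirror image of Lemma \ref{lemma: 1-1-2 not bad} (where $\ell(x)=1$, $\ell(y)=2$), so I expect the argument to run symmetrically with the roles of $R_1$ and $R_2$ interchanged. First I would unpack what \emph{good} means in the $(2\text{-}1\text{-}1)$ regime: consulting Definition \ref{def: good and bad}, the bad condition for $\ell(x)=2$, $\ell(z)=1$, $\ell(y)=1$ is $(3\text{-bad})$, which asserts that either $\H(R_2)$ is twisted, or both $\H(R_1)$ and $\H(R_2)$ are wedged. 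Since $\H$ is good, the negation gives: $\H(R_2)$ is not twisted, \textbf{and} at least one of $\H(R_1)$, $\H(R_2)$ is not wedged.

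With these two facts in hand, I would split into two cases mirroring the proof of Lemma \ref{lemma: 1-1-2 not bad}. In Case 1, suppose $\H(R_2)$ is not wedged. Since $\ell(y)=1$ and $\ell(z)=1$ in $R_2$, Lemma \ref{Lemma: 1-1 not twisted or wedged} applies (with the role of the ``$a$'' vertex played by $y$ and the singleton-list endpoint on the $z$ side): because $\H(R_2)$ is not twisted, there is a $1$-def $\H(R_2)$-coloring $\psi$, and because it is also not wedged, we may take $\defect_\psi(z) = 0$. Then since $\ell(x)=2$ in $R_1$, Lemma \ref{Lemma: 1-2}\ref{Lemma: 1-2 coloring} furnishes a $1$-def $\H(R_1)$-coloring $\phi$. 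As $\phi(z)=\psi(z)$ and $\defect_\psi(z)=0$, the union $\phi \cup \psi$ has no conflict accumulating at $z$ beyond what $R_1$ already contributes, so it is a $1$-def $\H$-coloring of $T'$.

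In Case 2, suppose instead that $\H(R_1)$ is not wedged (so possibly $\H(R_2)$ is wedged, but we still know $\H(R_2)$ is not twisted). Now I would use Lemma \ref{Lemma: 1-2}\ref{Lemma: 1-2 not wedged} on $R_1$ to obtain a $1$-def coloring $\phi$ with $\defect_\phi(z)=0$, and then Lemma \ref{Lemma: 1-1 not twisted or wedged} on $R_2$ (using only that $\H(R_2)$ is not twisted) to obtain a $1$-def coloring $\psi$; gluing along the matched color at $z$ again yields the result. The one point requiring care—and the main potential obstacle—is verifying that the singleton-list vertex orientations in Lemmas \ref{Lemma: 1-1 not twisted or wedged} and \ref{Lemma: 1-2} line up correctly with the roles of $x$, $y$, $z$ in $R_1$ and $R_2$, since those lemmas are stated for the abstract graph $R$ with distinguished vertex $a$ of list size $1$. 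I would confirm that the ``$z$'' vertex, shared between $R_1$ and $R_2$, plays consistent roles so that the $\defect(z)=0$ guarantee on one side correctly absorbs the glued edge; once this bookkeeping checks out, the proof is immediate, and I would likely just write ``This is the symmetric case to Lemma \ref{lemma: 1-1-2 not bad}, and the proof follows similarly.''
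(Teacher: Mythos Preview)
Your proposal is correct and matches the paper's approach exactly: the paper's proof is the single sentence ``This case is symmetric to that of Lemma \ref{lemma: 1-1-2 not bad}, and the proof follows similarly,'' which is precisely the conclusion you reach. Your detailed unpacking of the two cases (swapping the roles of $R_1$ and $R_2$ relative to Lemma \ref{lemma: 1-1-2 not bad}) is the correct symmetry, and the bookkeeping you flag---that $z$ must play the role of $a$ in both Lemma \ref{Lemma: 1-1 not twisted or wedged} and Lemma \ref{Lemma: 1-2} so that the $\defect(z)=0$ guarantee falls on the shared vertex---does check out (note your parenthetical in Case 1 has $y$ as $a$, but you want $z$ as $a$ to get $\defect_\psi(z)=0$; this is exactly the orientation issue you correctly anticipate needing to verify).
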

\begin{proof}
   This case is symmetric to that of Lemma \ref{lemma: 1-1-2 not bad}, and the proof follows similarly.
\end{proof}

\begin{lemma}\label{lemma: 1-2-1 not bad}
   If $\ell(x) = 1$, $\ell(z) =2$, and $\ell(y) = 1$, then there exists an $\H$ coloring of $T'$.
\end{lemma}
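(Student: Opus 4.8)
The plan is to use that $T' = T\setminus\{u,v\}$ is the union of $R_1$ and $R_2$, which share only the vertex $z$. Consequently, if we color $R_1$ and $R_2$ separately so that both assign the same color to $z$, then in the glued coloring $\defect(z) = \defect_{R_1}(z) + \defect_{R_2}(z)$, while every other vertex of $T'$ lies in exactly one $R_i$ and retains its defect from that piece. Under the identification of each $R_i$ with $R$, the common vertex $z$ plays the role of $c$ and the degree-two endpoint ($x$ for $R_1$, $y$ for $R_2$) plays the role of $a$, so each $\H(R_i)$ is a cover of the kind governed by Lemmas \ref{Lemma: 1-1 not twisted or wedged}, \ref{Lemma: 1-2}, and \ref{lemma: not twisted and wedged}. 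Hence it suffices to produce a single color $z^* \in L(z)$ together with $1$-defective colorings of $R_1$ and $R_2$ both using $z = z^*$, at least one of which has $\defect(z)=0$.

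Write $L(z) = \{z_1, z_2\}$, and for $j \in \{1,2\}$ say that $R_i$ is \emph{twisted for} $z_j$ if the forced cover $\H(R_i)\setminus\{z_{3-j}\}$ (which has $\ell(a) = \ell(c) = 1$) is twisted. I would record three ingredients. First, by Lemma \ref{Lemma: 1-1 not twisted or wedged}, if $R_i$ is not twisted for $z_j$, then $R_i$ admits a $1$-defective coloring with $z = z_j$. Second, by Lemma \ref{Lemma: 1-2}\ref{Lemma: 1-2 coloring} applied to the full cover $\H(R_i)$, there is a $1$-defective coloring of $R_i$ with $\defect(z) = 0$, whose value on $z$ is then a color admitting a $\defect(z)=0$ coloring. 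Third, by Lemma \ref{lemma: not twisted and wedged}, if $R_i$ is twisted for $z_j$, then there is a $0$-defective coloring of $\H(R_i)$ using $z = z_{3-j}$ (in particular a $\defect(z)=0$ coloring via the \emph{other} color); moreover, since that lemma shows $\H(R_i)\setminus\{z_j\}$ is not wedged and since twisted covers are wedged (Definition \ref{def: twisted and wedged}), no $R_i$ can be twisted for both colors of $z$.

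Next I would unwind the good hypothesis for this case $\ell(x) = 1, \ell(z) = 2, \ell(y) = 1$: by Definition \ref{def: good and bad}, $\H$ good means $\H$ is not $4$-bad, i.e.\ there is no labeling making $R_1$ twisted for $z_1$ while $R_2$ is twisted for $z_2$. Combined with ``at most one twist color per piece,'' the configuration is captured by recording for each of $R_1, R_2$ whether it is twisted for no color, for $z_1$, or for $z_2$; goodness rules out exactly the two ``opposite'' configurations, namely $R_1$ twisted for $z_1$ with $R_2$ twisted for $z_2$, and its mirror image.

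Finally I would run the resulting finite check. When neither piece is twisted, take $z^*$ to be the $\defect(z)=0$ color provided for $R_1$ by Lemma \ref{Lemma: 1-2}\ref{Lemma: 1-2 coloring}, and color $R_2$ with $z = z^*$ via Lemma \ref{Lemma: 1-1 not twisted or wedged}. When some piece is twisted, say $R_i$ twisted for $z_j$, set $z^* = z_{3-j}$: Lemma \ref{lemma: not twisted and wedged} colors $R_i$ with $z = z^*$ and $\defect(z)=0$, and the other piece cannot be twisted for $z^*$ (that would realize a forbidden opposite configuration), so Lemma \ref{Lemma: 1-1 not twisted or wedged} colors it with $z = z^*$. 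In every surviving configuration this gives $\defect(z) \leq 0 + 1 = 1$ and leaves all other defects within a single piece, yielding the desired $1$-defective $\H$-coloring of $T'$. I expect the main obstacle to be the bookkeeping of the second and third paragraphs: proving that no $R_i$ is twisted for both colors and that the twist colors pinned down by Lemma \ref{lemma: not twisted and wedged} make the two forbidden configurations coincide exactly with non-goodness.
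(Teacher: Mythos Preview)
Your proposal is correct and follows essentially the same two-case split as the paper: when some $\H(R_i)\setminus\{z_j\}$ is twisted, use Lemma~\ref{lemma: not twisted and wedged} to get a $0$-defective coloring of $R_i$ with $z$ equal to the other color, then invoke goodness plus Lemma~\ref{Lemma: 1-1 not twisted or wedged} on the remaining piece; when no twist occurs anywhere, use Lemma~\ref{Lemma: 1-2}\ref{Lemma: 1-2 coloring} on $R_1$ and Lemma~\ref{Lemma: 1-1 not twisted or wedged} on $R_2$. Your additional observation that no $R_i$ can be twisted for both colors of $z$ is correct but is not actually needed, since the relevant non-twistedness of the \emph{other} piece at $z^*$ already follows directly from goodness.
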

\begin{proof}
%
    Suppose $L(z) = \{z_1, z_2\}$. Case 1: there exist $i, j \in \{1, 2\}$ such that $\H(R_i) \setminus \{z_j\}$ is twisted. Without loss of generality, assume $i = 1$ and $j = 2$, so that  
    $\H(R_1) \setminus \{z_2\}$ is twisted. By Lemma \ref{lemma: not twisted and wedged}, there is a $0$-def $\H(R_1)$-coloring $\phi$ of $R_1$ with $\phi(z) = z_2$.
    Since $\H$ is good, it follows $\H(R_2)\setminus\{z_1\}$ is not twisted.
    Therefore, by Lemma \ref{Lemma: 1-1 not twisted or wedged}, there exists a $1$-defective $\H(R_2)\setminus\{z_1\}$ coloring $\psi$ of $R_2$. By definition, $\phi(z) = \psi(z)$, and as $\defect_\phi(z) = 0$, it follows $\phi \cup \psi$ is a $1$-defective $\H$-coloring of $T'$.

    Case 2: $\H(R_i) \setminus \{z_j\}$ is not twisted for all $i,j \in \{1, 2\}$. By Lemma \ref{Lemma: 1-2}\ref{Lemma: 1-2 coloring}, there exists an $\H(R_1)$ coloring $\phi$ of $R_1$ such that $\defect_\phi(z) = 0$. Without loss of generality, let $\phi(z) = z_1$. Then as $\H(R_2)\setminus\{z_2\}$ is not twisted, by Lemma \ref{Lemma: 1-1 not twisted or wedged} there exists a $1$-defective $\H(R_2)\setminus\{z_2\}$ coloring $\psi$ of $R_2$. By definition, $\phi(z) = \psi(z)$, and as $\defect_\phi(z) = 0$, it follows $\phi \cup \psi$ is a $1$-defective $\H$ coloring of $T'$.
\end{proof}

\begin{lemma}\label{lemma: 1-1-1 not bad}
   If $\ell(x) = 1$, $\ell(z) =1$, and $\ell(y) = 1$, then there exists an $\H$ coloring of $T'$.
\end{lemma}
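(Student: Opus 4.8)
The plan is to exploit the fact that $T' = T \setminus \{u,v\}$ is just the two copies $R_1 = T[x,u_1,v_1,z]$ and $R_2 = T[z,u_2,v_2,y]$ of $R$ glued along the single vertex $z$, and that in this case all three of $x,y,z$ carry singleton lists. So I would color $R_1$ and $R_2$ separately and glue the colorings; since $\ell(z)=1$ the forced value $\phi(z)$ is common to both pieces, and the only vertex whose defect spans both pieces is $z$ (every other vertex lies entirely inside one of $R_1,R_2$). First I would unpack the hypothesis: $\H$ good means $\H$ is not $(5$-bad$)$, which is exactly the statement that $\H(R_1)$ and $\H(R_2)$ are both \emph{not twisted}, and that \emph{at least one} of $\H(R_1),\H(R_2)$ is \emph{not wedged}.

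With this in hand the argument splits into two symmetric cases according to which of $R_1,R_2$ fails to be wedged; say $\H(R_1)$ is not wedged. Because $\H(R_1)$ is neither twisted nor wedged, I would invoke the ``furthermore'' clause of Lemma~\ref{Lemma: 1-1 not twisted or wedged} with $z$ playing the role of the distinguished vertex $a$, producing a $1$-defective $\H(R_1)$-coloring $\phi$ with $\defect_\phi(z)=0$. Since $\H(R_2)$ is not twisted, the first clause of Lemma~\ref{Lemma: 1-1 not twisted or wedged} yields a $1$-defective $\H(R_2)$-coloring $\psi$, for which $\defect_\psi(z)\le 1$ holds automatically by $1$-defectiveness. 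As $\ell(z)=1$ forces $\phi(z)=\psi(z)$, the union $\phi\cup\psi$ is a well-defined $\H$-coloring of $T'$; the neighbors of $z$ in $T'$ are exactly $u_1,v_1,u_2,v_2$, so its defect equals $\defect_\phi(z)+\defect_\psi(z)\le 0+1=1$, while every remaining vertex keeps its (at most $1$) defect from the single piece containing it. The case $\H(R_2)$ not wedged is identical with the roles of $R_1$ and $R_2$ interchanged.

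The one point requiring care—and the place I expect the main (admittedly modest) obstacle to lie—is the role assignment in Lemma~\ref{Lemma: 1-1 not twisted or wedged}: its ``not wedged'' conclusion produces $\defect=0$ at the specific vertex labeled $a$, whereas I need $\defect(z)=0$, and the wedged template is not symmetric in $a$ and $c$. I would resolve this by observing that ``wedged'' as used in the good/bad definition is an isomorphism-invariant property (it is defined via $\cong$ to the template), so if $\H(R_1)$ is not wedged then no labeling of $R_1$ realizes the wedged template; in particular the labeling sending $z\mapsto a$ does not, which is precisely what is needed to apply the lemma with $z$ in the role of $a$. Everything else is the routine bookkeeping of defects described above.
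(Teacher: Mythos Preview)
Your proposal is correct and follows essentially the same argument as the paper: unpack ``good'' to get that neither $\H(R_1)$ nor $\H(R_2)$ is twisted and at least one is not wedged, apply Lemma~\ref{Lemma: 1-1 not twisted or wedged} to the non-wedged piece to get $\defect(z)=0$ there, apply the weaker clause to the other piece, and glue. Your explicit justification that the isomorphism-invariance of ``wedged'' lets you place $z$ in the role of $a$ is a point the paper uses silently, so your write-up is in fact slightly more careful.
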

\begin{proof}
    Since $\H$ is good, it follows $\H(R_1)$ is not twisted, $\H(R_2)$ is not twisted, and at least one of $\H(R_1)$ and $\H(R_2)$ is not wedged. Without loss of generality, we may assume $\H(R_1)$ is not wedged (the case when $\H(R_2)$ is not wedged is symmetric). Then by Lemma \ref{Lemma: 1-1 not twisted or wedged}, there exists a $1$-defective $\H(R_1)$ coloring $\phi$ of $R_1$ with $\defect_\phi(z) = 0$. Since $\H(R_2)$ is not twisted, by Lemma \ref{Lemma: 1-1 not twisted or wedged}, there exists a $1$-defective $\H(R_2)$ coloring $\psi$ of $R_2$. Since $\phi(z) = \psi(z)$ and $\defect_\phi(z) = 0$, it follows $\phi \cup \psi$ is a $1$-defective $H$-coloring of $T'$.  
\end{proof}

\section{Proof of Lemma \ref{lemma: bad}}\label{section: proof of lemma bad}
 
    The proof of Lemma \ref{lemma: bad} follows from the series of lemmas below, which cover all the cases when $\H_\phi$ is bad. For convenience, throughout this section assume $\H$ is such that $L(x) = \{x_1, x_2, x_3\}$, $L(y) = \{y_1, y_2, y_3\}$, $L(z) = \{z_1, z_2, z_3\}$, $L(u_1) = \{b_1, b_2, b_3\}$,  $L(v_1) = \{d_1, d_2, d_3\}$, $L(u_2) = \{e_1, e_2, e_3\}$,  $L(v_2) = \{f_1, f_2, f_3\}$. Further assume $\phi(u) = \alpha$, $\phi(v) = \beta$ for some $\alpha \in L(u)$ and $\beta \in L(v)$.

\begin{lemma}\label{lemma: 2-1-2 wedged}
If $\H_\phi$ is 1-bad, (i.e., $\ell_\phi(x) = 2$, $\ell_\phi(z) = 1$, $\ell_\phi(y) = 2$, and $\H_\phi(R_1)$ and $\H_\phi(R_2)$ are both wedged), then $\phi$ can be extended to $1$-defective $\H$-colorings $\psi, \rho$ of $T$ such that $\defect_\psi(u) \leq 1$, $\defect_\psi(v) = 0$, and $\defect_\rho(u) = 0$, $\defect_\rho(v) \leq 1$.
\end{lemma}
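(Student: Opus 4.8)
The plan is to work out explicitly what a ``wedged'' cover of $R_1$ and $R_2$ forces, and then to exploit the fact that the vertex $z$ has only a single available color (since $\ell_\phi(z)=1$) to link the two colorings together. Concretely, writing $L_\phi(z) = \{z_1\}$, I would first set $\phi(z) \defeq z_1$ as the forced value. The key observation is that when $\H_\phi(R_1)$ is wedged, the structure from Definition \ref{def: twisted and wedged} tells us exactly which edges appear between $z_1$ (playing the role of the singleton ``$c$'') and the two-element lists $L_\phi(u_1)$ and $L_\phi(v_1)$. My strategy is to record, in the wedged configuration, that $z_1$ conflicts with at most one color in each of $L_\phi(u_1)$ and $L_\phi(v_1)$, so that I always have a free color at $u_1$ and $v_1$ avoiding $z_1$; I then color $R_1$ greedily inward toward $x$, and symmetrically color $R_2$ inward toward $y$.

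The heart of the argument is an asymmetric accounting of defects at $u$ and $v$. First I would produce $\psi$: color $z \defeq z_1$, then extend across $R_1$ and $R_2$ so that the color chosen at $x$ (which is the neighbor of $u = \phi(u)$ inside $R_1$) is allowed to conflict with $\alpha = \phi(u)$, contributing the single permitted defect to $u$, while forcing the color at $y$ (the neighbor of $v$) to avoid $\beta = \phi(v)$, so $\defect_\psi(v) = 0$. To do this I would invoke the wedged structure: because $z_1$ has a non-conflicting partner in $L_\phi(v_1)$ and in $L_\phi(v_2)$, I can always route the coloring of $R_2$ so that $y$ receives a color not matched to $\beta$. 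The coloring $\rho$ is then obtained by the mirror-image construction, swapping the roles of $x/u$ and $y/v$ throughout, which yields $\defect_\rho(u) = 0$ and $\defect_\rho(v) \leq 1$. Since the hypothesis is symmetric in $R_1$ and $R_2$ (both wedged), this symmetry is legitimate.

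The main obstacle I anticipate is verifying that a defect-free choice at $z_1$ really does propagate: I must check that after fixing $\phi(z) = z_1$ and choosing the non-conflicting neighbors at $u_1, v_1$ (resp.\ $u_2, v_2$), the remaining colors at $x$ and $y$ can simultaneously be chosen to keep every internal defect at most $1$ \emph{and} to satisfy the prescribed boundary condition at $u$ or $v$. In the wedged case, unlike the non-wedged case of Lemma \ref{Lemma: 1-2}\ref{Lemma: 1-2 not wedged}, I cannot guarantee $\defect(a) = 0$ for free on both halves at once, which is precisely why one unit of defect must be absorbed at $u$ (for $\psi$) or at $v$ (for $\rho$). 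I would therefore track the matchings $M_{u_1 x}$, $M_{z u_1}$ and their counterparts carefully, most likely by relabeling the colors of $L(x)$, $L(u_1)$, $L(v_1)$ so that $\alpha \sim x_1$ and the wedged edges take the canonical form from Definition \ref{def: twisted and wedged}, and then exhibiting the single valid color assignment by direct inspection. The remaining bad cases (2-bad through 5-bad) will be handled by analogous lemmas in this section, each time using the singleton list at $z$ as the pivot between the two halves.
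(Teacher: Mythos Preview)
Your plan has a concrete gap: you intend to absorb the single defect at $u$ through the vertex $x$, but in the 1-bad case this is impossible. Both $u$ and $v$ are adjacent to $x$, and since $\ell_\phi(x)=2$ with maximal matchings, $\alpha$ and $\beta$ must conflict with the \emph{same} color of $L(x)$ (otherwise two colors would be removed and $\ell_\phi(x)=1$). Hence every color in $L_\phi(x)$ avoids both $\alpha$ and $\beta$, while the unique color in $L(x)\setminus L_\phi(x)$ conflicts with both. There is simply no color at $x$ that hits $\alpha$ alone, so your sentence ``relabeling \dots so that $\alpha\sim x_1$'' already forces $\beta\sim x_1$ as well. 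Your greedy plan inside $\H_\phi$ also cannot succeed: with $z$ playing the role of $a$ in each wedged $\H_\phi(R_i)$, a short check of the (fully determined, by maximality) matching structure in Definition~\ref{wedged 2} shows every $1$-defective $\H_\phi(R_i)$-coloring has $\defect(z)=1$. Gluing two such colorings at $z$ gives $\defect(z)\geq 2$, so no $1$-defective $\H_\phi$-coloring of $T\setminus\{u,v\}$ exists at all.

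The missing idea, which the paper uses, is to step outside $L_\phi$ at the vertex $u_1$ rather than at $x$. Since $v\not\sim u_1$, the color $b_3\in L(u_1)\setminus L_\phi(u_1)$ (the one matched to $\alpha$) contributes a defect to $u$ only. With this extra color available, the wedged structure on $R_1$ (say $z_2\sim b_2,d_2$ and $x_1\sim b_1\sim d_1\sim x_2$) yields the independent set $\{x_1,b_3,d_1,z_2\}$, so the $R_1$ side is colored with zero internal defect and $\defect(z)=0$ on that side. Combine this with a $1$-defective $\H_\phi(R_2)$-coloring from Lemma~\ref{Lemma: 1-2}\ref{Lemma: 1-2 coloring} (which exists since $\ell_\phi(y)=2$) to obtain $\psi$ with $\defect_\psi(u)\leq 1$, $\defect_\psi(v)=0$. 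The coloring $\rho$ is produced symmetrically, using $d_3\in L(v_1)\setminus L_\phi(v_1)$ in place of $b_3$.
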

\begin{proof}
We first prove the existence of $\psi$. Without loss of generality, assume $\{x_3, b_3, z_1\} \subseteq N_H(\alpha)$ and $\{x_3, d_3, z_3\} \subseteq N_H(\beta)$; furthermore, as $\H_\phi(R_1)$ is wedged, we may assume $z_2 \sim b_2$, $z_2 \sim d_2$, $x_1 \sim b_1$, $b_1 \sim d_1$, and $d_1 \sim x_2$ (see Figure \ref{Fig: 2-1-2_Bad}).

\begin{figure}
    \centering

\includegraphics{figures/Figure_1Bad.tex}
    \caption{1-Bad cover $\mathcal{H}$ as in Lemma \ref{lemma: 2-1-2 wedged} (some edges and vertices not shown).}
    \label{Fig: 2-1-2_Bad}

\end{figure}

By Lemma \ref{Lemma: 1-2}\ref{Lemma: 1-2 coloring}, there exists a $1$-defective $\H_\phi(R_2)$-coloring $\psi_0$ of $R_2$. Note that $\psi_0(z) = z_2$. Now extend $\psi_0$ by taking $\psi_0(x) = x_1$, $\psi_0(u_1) = b_3$, $\psi_0(v_1) = d_1$. Since $\{x_1, b_3, d_1, z_2\}$ is an independent set, it follows $\defect_{\psi_0}(z_2) \leq 1$, and $\defect_{\psi_0}(u_1) = 0$. Take $\psi \defeq \phi \cup \psi_0$. Then $\psi$ is a $1$-def $\H$-coloring with $\psi(u) \leq 1$ and $\psi(v) = 0$, as desired.
The coloring $\rho$ can be constructed similarly, switching the roles of $u$ with $v$ and $b_3$ with $d_3$.
\end{proof}

\begin{lemma}\label{lemma: 1-1-2 and R_1 is twisted}
If $\H$ is 2(i)-bad (i.e., $\ell_\phi(x) = 1$, $\ell_\phi(z) = 1$, $\ell_\phi(y) = 2$, and $\H_\phi(R_1)$ is twisted), then $\phi$ can be extended to $1$-defective $\H$-colorings $\psi, \rho$ of $T$ such that $\defect_\psi(u) \leq 1$, $\defect_\psi(v) = 0$, and $\defect_\rho(u) = 0$, $\defect_\rho(v) \leq 1$.
\end{lemma}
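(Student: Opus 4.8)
The proof will follow the template of Lemma~\ref{lemma: 2-1-2 wedged}: normalize the lists, extract from the twisted structure of $\H_\phi(R_1)$ the handful of (non-)adjacencies needed via the matching condition, and then color $R_1$ so that a single defect is deflected onto $u$ (for $\psi$) or $v$ (for $\rho$) while $z$ is kept defect-free inside $R_1$, finishing on $R_2$ with Lemma~\ref{Lemma: 1-2}. The reason a defect must be deflected is exactly the content of being twisted: with $\phi(u)=\alpha$, $\phi(v)=\beta$ fixed and $\ell_\phi(x)=\ell_\phi(z)=1$, the vertices $x$ and $z$ are forced to their surviving colors, and a short check (as in the discussion following Lemma~\ref{Lemma: 1-1 not twisted or wedged}) shows the twisted $6$-cycle makes every choice of colors on $u_1,v_1$ in their surviving lists create a defect of size $2$ among $\{x,z,u_1,v_1\}$. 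Hence one defect has to leave $R_1$, and the only exits are $u$ and $v$.

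I normalize so that the surviving color of $x$ is $a_1$, that of $z$ is $c_1$, the surviving lists are $L_\phi(u_1)=\{b_1,b_2\}$ and $L_\phi(v_1)=\{d_1,d_2\}$ (the deleted colors being $b_3\sim\alpha$ and $d_3\sim\beta$, with $u\notsim v_1$ and $v\notsim u_1$ so that $b_3\notsim\beta$ and $d_3\notsim\alpha$), and the twisted cycle reads $a_1b_1d_2c_1b_2d_1a_1$ in $H$. Maximality of the matchings then yields $a_1\notsim b_3$, $c_1\notsim b_3$, $b_3\sim d_3$ (whence $b_3\notsim d_1,d_2$), together with the mirror statements for $d_3$. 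For $\psi$ I set $\psi(x)=a_1$, $\psi(z)=c_1$, $\psi(u_1)=b_3$, and $\psi(v_1)=d_1$: the color $b_3$ meets only $\alpha$, contributing one defect to $u$ and none inside $R_1$; the edge $a_1\sim d_1$ puts the forced internal defect on $x$; and $c_1$ avoids both $b_3$ and $d_1$, so $\defect(z)=0$ within $R_1$. Since $a_1,c_1,d_1\notsim\beta$, the vertex $v$ is untouched by $R_1$. I then color $R_2$ by Lemma~\ref{Lemma: 1-2}\ref{Lemma: 1-2 coloring} applied to $\H_\phi(R_2)$ (in which $z$ plays the role $a$ and $y$ the role $c$), with $z$ held at $c_1$; every color used there already avoids $\alpha$ and $\beta$, so $u$ and $v$ gain nothing, and $z$ acquires at most one further defect, leaving $\defect_\psi(z)\leq 1$. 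Thus $\psi$ is $1$-defective with $\defect_\psi(u)\leq 1$ and $\defect_\psi(v)=0$.

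The coloring $\rho$ is the mirror image, deflecting the defect onto $v$: take $\rho(v_1)=d_3$ (meeting only $\beta$), $\rho(u_1)=b_1$ so that $a_1\sim b_1$ places the internal defect on $x$, keep $\rho(z)=c_1$ defect-free in $R_1$, and finish again with Lemma~\ref{Lemma: 1-2}\ref{Lemma: 1-2 coloring} on $R_2$; this gives $\defect_\rho(u)=0$ and $\defect_\rho(v)\leq 1$. The step I expect to demand the most care is the defect accounting at the shared vertex $z$: one must verify that the $R_1$-coloring really leaves $\defect(z)=0$, so that the single defect $z$ may collect from the $R_2$-coloring stays within budget, and one must confirm against the matching condition that $b_3$ and $d_3$ are non-adjacent to the colors chosen on the remaining vertices of $R_1$. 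All of these non-adjacencies are precisely what the twisted hypothesis on $\H_\phi(R_1)$ guarantees.
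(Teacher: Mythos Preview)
Your proposal is correct and follows essentially the same approach as the paper's proof. Modulo notation (you write $a_1,c_1$ for the surviving colors of $x,z$ where the paper writes $x_2,z_2$) and order of presentation (you color $R_1$ first, the paper colors $R_2$ first via Lemma~\ref{Lemma: 1-2}\ref{Lemma: 1-2 coloring} and then extends), the chosen colors are identical: $\psi(x)=x_2$, $\psi(u_1)=b_3$, $\psi(v_1)=d_1$, $\psi(z)=z_2$, with the matching-condition checks you list matching those implicit in the paper's ``due to the matching condition'' line, and $\rho$ obtained by the same $b_3\leftrightarrow d_3$ swap.
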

\begin{proof}
We will prove the existence of $\psi$ and the existence of $\rho$ will follow similarly.
Without loss of generality, assume $\{x_1, b_3, z_1\} \subseteq N_H(\alpha)$ and $\{x_3, d_3, z_3\} \subseteq N_H(\beta)$; furthermore, as $\H_\phi(R_1)$ is twisted, we may assume $x_2b_1d_2z_2b_2d_1x_2$ is a cycle in $H$ (see Figure \ref{Fig: 1-1-2_Bad_Twisted}).

By Lemma \ref{Lemma: 1-2}\ref{Lemma: 1-2 coloring}, there exists a $1$-defective $\H_\phi(R_2)$-coloring $\psi_0$ of $R_2$. Note that $\psi_0(z) = z_2$. Extend $\psi_0$ by setting $\psi_0(x) = x_2$, $\psi_0(u_1) = b_3$, $\psi(v_1) = d_1$. Then, due to the matching condition, $\psi_0$ is a $1$-def $\H$-coloring of $T\setminus\{u, v\}$, with $\defect_{\psi_0}(u_1) = 0$. Then $\psi \defeq \phi \cup \psi_0$ is a $1$-def $\H$-coloring of $T$ with $\defect_\psi(u) = 1$, and $\defect_\psi(v) = 0$. The coloring $\rho$ can be constructed similarly, switching the roles of $u$ with $v$ and $b_3$ with $d_3$.

\end{proof}

\begin{figure}
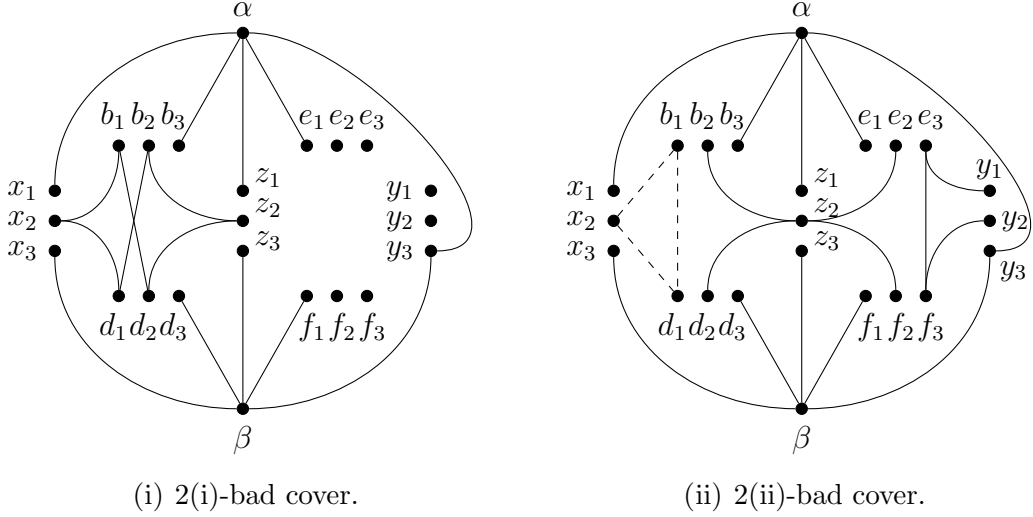

    \centering
    \begin{subfigure}[t]{.45\textwidth}
  \includegraphics{figures/Figure_2BadCase1.tex}
    \caption{2(i)-bad cover.}
    \label{Fig: 1-1-2_Bad_Twisted}
    \end{subfigure}%
    \begin{subfigure}[t]{.45\textwidth}
        \centering
    \includegraphics{figures/Figure_2BadCase2.tex}
    \caption{2(ii)-bad cover.}
    \label{Fig: 1-1-2_Wedged}
    \end{subfigure}
\caption{2-bad covers as in Lemmas \ref{lemma: 1-1-2 and R_1 is twisted} and \ref{lemma: 1-1-2 and R_1 and R_2 are wedged} respectively (some edges and vertices not shown).}
\end{figure}

\begin{lemma}\label{lemma: 1-1-2 and R_1 and R_2 are wedged}
If $\H$ is 2(ii)-bad (i.e., $\ell_\phi(x) = 1$, $\ell_\phi(z) = 1$, $\ell_\phi(y) = 2$, and $\H_\phi(R_1)$ and $\H_\phi(R_2)$ are wedged), then $\phi$ can be extended to $1$-defective $\H$-colorings $\psi, \rho$ of $T$ such that $\defect_\psi(u) \leq 1$, $\defect_\psi(v) = 0$, and $\defect_\rho(u) = 0$, $\defect_\rho(v) \leq 1$.
\end{lemma}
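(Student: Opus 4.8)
The plan is to follow the template of Lemmas \ref{lemma: 2-1-2 wedged} and \ref{lemma: 1-1-2 and R_1 is twisted}: color the half $R_2$ first, read off the forced color of the shared vertex $z$, and then extend across $R_1$ while steering the single permitted defect onto $u$ (for $\psi$) or $v$ (for $\rho$). After the usual normalization I would assume $\{x_1, b_3, z_1\} \subseteq N_H(\alpha)$ and $\{x_3, d_3, z_3\} \subseteq N_H(\beta)$, so that in $\H_\phi$ we have $L_\phi(x) = \{x_2\}$, $L_\phi(z) = \{z_2\}$, $L_\phi(u_1) = \{b_1, b_2\}$, and $L_\phi(v_1) = \{d_1, d_2\}$; writing $\H_\phi(R_1)$ in wedged form (Definition \ref{def: twisted and wedged}, with $x$ playing the role of $a$ and $z$ of $c$) gives $x_2 \sim b_1$, $x_2 \sim d_1$, and at least two of $\{b_2 d_2,\, z_2 b_2,\, z_2 d_2\}$ present.

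First I would color $R_2$. Applying Lemma \ref{Lemma: 1-2}\ref{Lemma: 1-2 coloring} to $\H_\phi(R_2)$ (with $z$ as $a$ and $y$ as $c$) produces a $1$-defective coloring $\psi_0$ of $R_2$ with $\psi_0(z) = z_2$ and $\defect_{\psi_0}(z) \leq 1$. The crucial point, and the reason this case is the delicate one, is that $\H_\phi(R_2)$ being wedged forces any $1$-defective coloring of $R_2$ to place a conflict on $z$: trying to keep $z$ conflict-free forces $u_2, v_2$ onto their two mutually-conflicting colors and then the sole available color of $y$ conflicts one of them, producing defect $2$. Thus $R_2$ already spends the entire defect budget at $z$, and the extension over $R_1$ must leave $z$ untouched. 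Because every color drawn from $\H_\phi$ avoids $\alpha$ and $\beta$ by definition, $\defect(u)$ and $\defect(v)$ can only be affected by a deliberate out-of-$\H_\phi$ choice, which I will make at exactly one vertex.

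I then extend over $R_1$ by a short case split on the wedge edges at $z$. If both $z_2 \sim b_2$ and $z_2 \sim d_2$, then the only $z$-safe choices inside $\H_\phi$ are $u_1 = b_1$ and $v_1 = d_1$, and since $x_2 \sim b_1, d_1$ this would force defect $2$ at $x$; I resolve this by recoloring $x$. For $\psi$ I set $x = x_1$, $u_1 = b_1$, $v_1 = d_1$: the matching condition gives $x_1 \notsim b_1$, $x_1 \notsim d_1$, and $x_1 \notsim \beta$, so $\defect(x) = 1$ (its only conflict being with $\alpha$), $z$ gains no conflict, and any $b_1$--$d_1$ conflict costs only $u_1, v_1$ one defect apiece; hence $\defect_\psi(u) \leq 1$ and $\defect_\psi(v) = 0$. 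For $\rho$ I set $x = x_3$ symmetrically. In the remaining case exactly one of $z_2 \sim b_2$, $z_2 \sim d_2$ holds, which (to reach two wedge edges) forces $b_2 \sim d_2$; here I keep $R_1$ entirely inside $\H_\phi$, choosing for $u_1, v_1$ the color avoiding $z_2$ and the double conflict at $x$ (the edge $b_2 \sim d_2$ guarantees the chosen $u_1, v_1$ do not conflict), which yields $\defect(u) = \defect(v) = 0$ and serves as both $\psi$ and $\rho$.

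Finally I would glue $\psi_0$ with the chosen coloring of $R_1$ and verify that every vertex has defect at most $1$: the only vertex receiving contributions from both halves is $z$, and by construction $R_1$ adds nothing to its defect. The main obstacle is precisely this coordination at $z$. With both halves wedged, $z$'s lone unit of slack is consumed by $R_2$, so the real content is showing that $R_1$ can always be colored without touching $z$; in the tight subcase this is possible only after paying one defect at $u$ or $v$, which the recoloring of $x$ accomplishes.
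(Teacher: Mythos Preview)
Your argument has a real gap in the $R_1$ step. You write $\H_\phi(R_1)$ in wedged form with $x$ playing $a$ and $z$ playing $c$, i.e.\ $x_2\sim b_1$, $x_2\sim d_1$, and at least two of $\{b_2d_2,\,z_2b_2,\,z_2d_2\}$. But the wedged condition of Definition~\ref{def: twisted and wedged} is only up to cover isomorphism, and since $\ell_\phi(x)=\ell_\phi(z)=1$ the isomorphism may send either one to $a$; unlike twisted, wedged is \emph{not} symmetric under swapping $a\leftrightarrow c$. Concretely, take (inside $\H_\phi(R_1)$) the edges $z_2b_1$, $z_2d_1$, $x_2b_1$, $x_2d_2$, $b_1d_1$, $b_2d_2$: this is wedged with $z$ as $a$ (two of $\{b_2d_2,\,x_2b_2,\,x_2d_2\}$ hold) but under no relabeling is it wedged with $x$ as $a$, so neither of your Cases~1--2 applies. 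Worse, in this configuration your recoloring-$x$ trick can fail: the only $z$-safe choices are $u_1=b_2$, $v_1=d_2$, these already conflict, and $d_2$ also conflicts $x_2$, giving $\defect(v_1)=2$; switching to $x=x_1$ helps only when $x_1\notsim b_2$, which the matching condition does not guarantee (one may have $x_1\sim b_2$ and $x_3\sim b_3$).

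The paper avoids this orientation ambiguity by working on the other half. It first disposes of the case where $\H_\phi(R_1)$ is twisted by invoking Lemma~\ref{lemma: 1-1-2 and R_1 is twisted}, then colors $R_1$ via Lemma~\ref{Lemma: 1-1 not twisted or wedged} (accepting $\defect(z)\le 1$ from $R_1$), and finally exploits the wedge of $\H_\phi(R_2)$---which, because $\ell_\phi(y)=2$, has the canonical orientation of Definition~\ref{wedged 2} with $z$ forced to be $a$---to pick the explicit independent set $\{z_2,e_1,f_3,y_1\}$ on $R_2$, with $e_1\sim\alpha$ supplying the single defect at $u$ and contributing nothing to $\defect(z)$. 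Your direction ($R_2$ first, then $R_1$) can be salvaged, but you would need to add the $z$-as-$a$ branch and, in the stubborn subcases, recolor $u_1$ to $b_3$ (the $\alpha$-neighbor) rather than $x$, as in Lemma~\ref{lemma: 2-1-2 wedged}.
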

\begin{proof}
We will prove the existence of $\psi$ and the existence of $\rho$ will follow similarly. If $\H_\phi(R_1)$ is twisted, then we may apply Lemma \ref{lemma: 1-1-2 and R_1 is twisted}; thus, we may assume $\H_\phi(R_1)$ is not twisted.
Without loss of generality, assume $\{y_3, e_1, z_1\} \subseteq N_H(\alpha)$ and $\{y_3, f_1, z_3\} \subseteq N_H(\beta)$; furthermore, as $\H_\phi(R_2)$ is wedged, we may assume without loss of generality that $z_2 \sim e_2$, $z_2 \sim f_2$, and $y_1e_3f_3y_2$ is a path in $H$ (see Figure \ref{Fig: 1-1-2_Wedged}).

Since $\H_\phi(R_1)$ is not twisted, by Lemma \ref{Lemma: 1-1 not twisted or wedged}, there exists a $1$-defective $\H_\phi(R_1)$-coloring $\psi_0$ of $R_1$ with $\defect_{\psi_0}(z) = 0$. Note $\psi_0(z) = z_2$.
Then due to the matching condition, $\{y_1, e_1, z_2, f_3\}$ is an independent set in $H$; extend $\psi_0$ by selecting this independent set. Then $\defect_{\phi_0}(u_2) = \defect_{\phi_0}(z) = 0$.  Thus $\psi \defeq \psi_0 \cup \phi$ is a $1$-defective $\H$ coloring of $T$ with $\defect_\psi(u) = 1$ and $\defect_\phi(v) = 0$, as desired. The coloring $\rho$ can be constructed similarly by swapping the roles of $u_2$ with $v_2$ and $e_1$ with $f_1$.


\end{proof}

\begin{lemma}\label{lemma: 2-1-1 twisted}
If $\H$ is 3(i)-bad (i.e., $\ell_\phi(x) = 2$, $\ell_\phi(z) = 1$, $\ell_\phi(y) = 1$, and $\H_\phi(R_2)$ is twisted), then $\phi$ can be extended to $1$-defective $\H$-colorings $\psi, \rho$ of $T$ such that $\defect_\psi(u) \leq 1$, $\defect_\psi(v) = 0$, and $\defect_\rho(u) = 0$, $\defect_\rho(v) \leq 1$.
\end{lemma}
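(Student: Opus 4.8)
The plan is to reduce this case to the $2(i)$-bad case already handled in Lemma \ref{lemma: 1-1-2 and R_1 is twisted} by exploiting the symmetry of $T$. The graph $T$ carries an automorphism $\tau$ interchanging the two copies of $R$ glued along $z$: explicitly $\tau$ swaps $x \leftrightarrow y$, $u_1 \leftrightarrow u_2$, $v_1 \leftrightarrow v_2$ and fixes $z$, $u$, $v$. That $\tau$ is an automorphism is visible from the conflict sets appearing in Lemmas \ref{lemma: 2-1-2 wedged} and \ref{lemma: 1-1-2 and R_1 and R_2 are wedged}, which show $u$ is adjacent to $\{x, y, z, u_1, u_2\}$ and $v$ to $\{x, y, z, v_1, v_2\}$; in particular $\tau$ fixes both $u$ and $v$. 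Since $\tau$ swaps $R_1$ and $R_2$ and exchanges $\ell_\phi(x)$ with $\ell_\phi(y)$, it carries the $3(i)$-bad hypothesis ($\ell_\phi(x) = 2$, $\ell_\phi(z) = 1$, $\ell_\phi(y) = 1$, $\H_\phi(R_2)$ twisted) to the $2(i)$-bad hypothesis ($\ell_\phi(x) = 1$, $\ell_\phi(z) = 1$, $\ell_\phi(y) = 2$, $\H_\phi(R_1)$ twisted). Applying Lemma \ref{lemma: 1-1-2 and R_1 is twisted} to the cover $\tau\cdot\H$ (with the same $\phi$, which is unchanged because $\tau$ fixes $u$ and $v$) produces colorings $\tilde\psi, \tilde\rho$, and transporting them back along $\tau$ gives $\psi, \rho$; because $\tau$ fixes $u$ and $v$, the bounds on $\defect(u)$ and $\defect(v)$ transfer verbatim.

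For readers who prefer an explicit construction, I would mirror the proof of Lemma \ref{lemma: 1-1-2 and R_1 is twisted} with the roles of $R_1$ and $R_2$ exchanged. First I would normalize the cover, assuming $\{y_1, e_3, z_1\} \subseteq N_H(\alpha)$ and $\{y_3, f_3, z_3\} \subseteq N_H(\beta)$ and, using that $\H_\phi(R_2)$ is twisted, that $y_2 e_1 f_2 z_2 e_2 f_1 y_2$ is a cycle in $H$. Next I would apply Lemma \ref{Lemma: 1-2}\ref{Lemma: 1-2 coloring} to the list-$2$ side $R_1$ (where $z$ plays the role of $a$ and $x$ that of $c$) to obtain a $1$-defective $\H_\phi(R_1)$-coloring $\psi_0$ with $\psi_0(z) = z_2$ forced by $\ell_\phi(z) = 1$. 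Finally I would extend $\psi_0$ across the twisted side by setting $\psi_0(y) = y_2$, $\psi_0(u_2) = e_3$, and $\psi_0(v_2) = f_1$, and take $\psi \defeq \phi \cup \psi_0$. The single conflict of $\psi$ at $u$ is then $u_2 = e_3 \sim \alpha$, giving $\defect_\psi(u) \leq 1$ and $\defect_\psi(v) = 0$; the coloring $\rho$ is produced symmetrically by swapping $u \leftrightarrow v$ and $e_3 \leftrightarrow f_3$.

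The main point requiring care, exactly as in Lemma \ref{lemma: 1-1-2 and R_1 is twisted}, is to confirm that this extension is globally $1$-defective and in particular that $\defect_{\psi_0}(u_2) = 0$. This is where the twisted $6$-cycle and the maximality of the matchings are used: since $e_3$ is the color of $u_2$ deleted by $\alpha$ and lies outside the twisted cycle, the matching condition forces $e_3 \notsim z_2$, $e_3 \notsim y_2$, and $e_3 \notsim f_1$, so $u_2$ acquires no internal conflict and contributes only its one defect to $u$. The remaining checks (at $z$, where $\psi_0(z) = z_2$ glues the two sides because $R_1$ and $R_2$ share only $z$; at $y$ and $v_2$; and that the Lemma \ref{Lemma: 1-2} coloring of $R_1$ is already $1$-defective there) are routine and identical to the $2(i)$ case. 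I expect no new obstruction: the symmetry guarantees that all of the bookkeeping transfers.
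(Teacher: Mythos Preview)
Your proposal is correct and matches the paper's approach exactly: the paper's proof is the single sentence ``This is symmetric to Lemma \ref{lemma: 1-1-2 and R_1 is twisted}'', and you have spelled out precisely that symmetry via the automorphism $\tau$ of $T$ swapping $R_1$ with $R_2$ while fixing $u,v,z$. Your optional explicit mirror of the construction is also correct (and more detailed than the paper bothers to be).
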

\begin{proof}
    This is symmetric to Lemma \ref{lemma: 1-1-2 and R_1 is twisted}.
\end{proof}

\begin{lemma}\label{lemma: 2-1-1 wedged}
If $\H$ is 3(ii)-bad (i.e., $\ell_\phi(x) = 2$, $\ell_\phi(z) = 1$, $\ell_\phi(y) = 1$, and $\H_\phi(R_1)$ and $\H_\phi(R_2)$ are wedged), then $\phi$ can be extended to $1$-defective $\H$-colorings $\psi, \rho$ of $T$ such that $\defect_\psi(u) \leq 1$, $\defect_\psi(v) = 0$, and $\defect_\rho(u) = 0$, $\defect_\rho(v) \leq 1$.
\end{lemma}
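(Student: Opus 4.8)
The plan is to prove this by symmetry, reducing it to the already-established Lemma~\ref{lemma: 1-1-2 and R_1 and R_2 are wedged} (the 2(ii)-bad case), exactly as Lemma~\ref{lemma: 2-1-1 twisted} was reduced to Lemma~\ref{lemma: 1-1-2 and R_1 is twisted}. The crucial observation is that $T$ admits an automorphism $\tau$ exchanging the two gadgets: it swaps $x \leftrightarrow y$, $u_1 \leftrightarrow u_2$, $v_1 \leftrightarrow v_2$, and fixes $z$, $u$, and $v$. This is indeed an automorphism because $u$ is adjacent to each of $x, u_1, z, u_2, y$ and $v$ to each of $x, v_1, z, v_2, y$, so both neighborhoods are preserved setwise; moreover $\tau$ restricts to a graph isomorphism $R_1 \to R_2$ carrying $(x, u_1, v_1, z)$ to $(y, u_2, v_2, z)$.

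First I would transport the cover. Applying $\tau$ to $\H$ produces an isomorphic $3$-fold cover $\H'$ of $T$, and since $\tau$ fixes $u$ and $v$, the partial coloring $\phi$ (with $\phi(u) = \alpha$, $\phi(v) = \beta$) is carried to a partial coloring of the same shape. I then verify that $\tau$ sends a $3$(ii)-bad configuration to a $2$(ii)-bad one: the pattern $(\ell_\phi(x), \ell_\phi(z), \ell_\phi(y)) = (2, 1, 1)$ becomes $(1, 1, 2)$, and since both \emph{wedged} notions are invariant under cover isomorphism, the hypothesis ``$\H_\phi(R_1)$ and $\H_\phi(R_2)$ are wedged'' is carried to ``$\H'_\phi(R_2)$ and $\H'_\phi(R_1)$ are wedged.'' Hence $\H'_\phi$ is $2$(ii)-bad.

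Next I would apply Lemma~\ref{lemma: 1-1-2 and R_1 and R_2 are wedged} to $\H'$ to obtain $1$-defective $\H'$-colorings $\psi'$ and $\rho'$ of $T$ with $\defect_{\psi'}(u) \leq 1$, $\defect_{\psi'}(v) = 0$ and $\defect_{\rho'}(u) = 0$, $\defect_{\rho'}(v) \leq 1$. Pulling these back through $\tau^{-1}$ yields $1$-defective $\H$-colorings $\psi, \rho$ of $T$; because $\tau$ fixes $u$ and $v$, the defect values at $u$ and $v$ are unchanged, so $\psi$ and $\rho$ satisfy exactly the required bounds.

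The only genuinely new content, and thus the main thing to get right, is confirming that $\tau$ really interchanges the two inequivalent \emph{wedged} conditions correctly: here $z$ plays the role of the degree-two vertex $a$ in $R_2$ (so $\H_\phi(R_2)$ is wedged in the sense of Definition~\ref{def: twisted and wedged}, with $\ell = 1$), whereas in $R_1$ the vertex $x$ with $\ell = 2$ plays the ``$c$''-role (so $\H_\phi(R_1)$ is wedged in the sense of Definition~\ref{wedged 2}); under $\tau$ these two roles swap consistently, using that $a$ and $c$ are interchangeable in $R$. Once this bookkeeping is checked the reduction is purely formal and no new coloring construction is needed. As a safeguard, should the symmetry bookkeeping prove delicate, I could instead mimic the direct argument of Lemma~\ref{lemma: 1-1-2 and R_1 and R_2 are wedged} verbatim with the roles of $R_1$ and $R_2$ (and of $e_1, f_1$ versus $b_1, d_1$) swapped.
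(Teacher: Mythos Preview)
Your proposal is correct and takes essentially the same approach as the paper: the paper's entire proof of this lemma is the single sentence ``This is symmetric to Lemma~\ref{lemma: 1-1-2 and R_1 and R_2 are wedged},'' and your argument simply makes that symmetry explicit via the automorphism $\tau$ of $T$. Your additional bookkeeping about which of the two \emph{wedged} definitions applies to each $R_i$ before and after $\tau$ is accurate and is exactly what one would need to spell out if pressed.
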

\begin{proof}
    This is symmetric to Lemma \ref{lemma: 1-1-2 and R_1 and R_2 are wedged}.
\end{proof}

\begin{lemma}\label{lemma: 1-2-1 bad}
If $\H$ is 4-bad (i.e., $\ell_\phi(x) = 1$, $\ell_\phi(z) = 2$, $\ell_\phi(y) = 1$, and there exists distinct $z_i, z_j \in L(z)$ such that $\H(R_1) \setminus \{z_i\}$ and $\H(R_2) \setminus \{z_j\}$ are twisted), then $\phi$ can be extended to $1$-defective $\H$-colorings $\psi, \rho$ of $T$ such that $\defect_\psi(u) \leq 1$, $\defect_\psi(v) = 0$, and $\defect_\rho(u) = 0$, $\defect_\rho(v) \leq 1$.
\end{lemma}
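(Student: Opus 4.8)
The plan is to mimic the explicit, diagram-driven style of Lemmas \ref{lemma: 2-1-2 wedged}--\ref{lemma: 2-1-1 wedged}: fix a convenient WLOG labelling of the two twisted $6$-cycles, write down $\psi$ and $\rho$ by hand, and check every defect against the matching condition. First I would record the local structure. In $\H_\phi$ we have $\ell_\phi(x) = \ell_\phi(y) = 1$, $\ell_\phi(z) = 2$, and $\ell_\phi(w) = 2$ for $w \in \{u_1, v_1, u_2, v_2\}$; write $L_\phi(z) = \{z_1, z_2\}$ and, relabelling if necessary, assume the two twisted covers are $\H_\phi(R_1)\setminus\{z_2\}$ and $\H_\phi(R_2)\setminus\{z_1\}$. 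The purpose of this relabelling is to make $z = z_2$ the value that is ``good for $R_1$ and bad for $R_2$'': in $R_1$ the non-adjacent pair is $x,z$ (playing the roles $a,c$), so Lemma \ref{lemma: not twisted and wedged} applies with $c = z$, $c_2 = z_2$ and yields a $0$-defective $\H_\phi(R_1)$-coloring $\phi_1$ with $\phi_1(z) = z_2$, whereas $\H_\phi(R_2)\setminus\{z_1\}$ being twisted says exactly that $R_2$ with $z$ frozen to $z_2$ carries the twisted obstruction. I would also fix the names so that $e_3 \in L(u_2)$ and $f_3 \in L(v_2)$ are the unique colors with $e_3 \sim \alpha$ and $f_3 \sim \beta$ killed in $\H_\phi$, take $\phi(x), \phi(y)$ forced to their single surviving colors $x_1, y_1$, and write the twisted cycle on $R_2$ as $y_1 e_1 f_2 z_2 e_2 f_1 y_1$.

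The key idea is that the single permitted defect at $u$ (resp.\ $v$) buys exactly the one extra color needed to defeat the twisted cycle on $R_2$. For $\psi$ I would set $z = z_2$, color $R_1$ by $\phi_1$ (which uses only $\H_\phi$-colors and so charges no defect to $u$ or $v$), and on $R_2$ put $u_2 = e_3$, $v_2 = f_1$, $y = y_1$. The matching condition together with the cycle edges forces $e_3 \notsim y_1, z_2, f_1, f_2$ (each of $y_1, z_2, f_1, f_2$ is already matched to $e_1$ or $e_2$ across the relevant edge), so the only conflict inside $R_2$ is the cycle edge $y_1 \sim f_1$, contributing one defect to each of $y$ and $v_2$. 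Hence $\psi \defeq \phi \cup \phi_1 \cup \{u_2 \mapsto e_3,\ v_2 \mapsto f_1,\ y \mapsto y_1\}$ is $1$-defective with $\defect_\psi(u) = 1$ (coming solely from $e_3 \sim \alpha$) and $\defect_\psi(v) = 0$. For $\rho$ I would keep $u_2$ among the $\H_\phi$-colors and instead spend the defect at $v$: set $u_2 = e_1$, $v_2 = f_3$, $y = y_1$, so that $f_3 \sim \beta$ gives the single defect at $v$ while the lone internal conflict $y_1 \sim e_1$ stays within budget, yielding $\defect_\rho(u) = 0$ and $\defect_\rho(v) = 1$.

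I expect the only real work to be the matching bookkeeping---checking that the third colors $e_3, f_3$ are ``free'' against the four cycle-colors and that the chosen $v_2$ (resp.\ $u_2$) creates at most one internal conflict---which is routine once the twisted cycle is drawn, exactly as in the figures accompanying Lemmas \ref{lemma: 2-1-2 wedged} and \ref{lemma: 1-1-2 and R_1 is twisted}. The one conceptual point worth stating is why a defect must be spent at all: with $z = z_2$ frozen and only the two $\H_\phi$-colors available at each of $u_2, v_2$, the twisted cycle admits no $1$-defective coloring, since each of the four choices of $(u_2, v_2) \in \{e_1,e_2\}\times\{f_1,f_2\}$ produces a vertex of defect $2$. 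Thus the extra color $e_3$ (resp.\ $f_3$), which by definition conflicts with $\alpha$ (resp.\ $\beta$), is precisely what is needed---and this is exactly the sense in which the $4$-bad configuration is bad rather than good.
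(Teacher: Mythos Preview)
Your proof is correct and is essentially the paper's argument with the roles of $R_1$ and $R_2$ swapped: the paper sets $\psi(z) = z_1$, colors $R_2$ $0$-defectively (using the twisted cycle at $z_2$), and spends the extra color $b_3$ (resp.\ $d_3$) in $R_1$, whereas you set $\psi(z) = z_2$, color $R_1$ $0$-defectively via Lemma~\ref{lemma: not twisted and wedged}, and spend $e_3$ (resp.\ $f_3$) in $R_2$. The two arguments are related by the obvious $R_1\leftrightarrow R_2$ symmetry of the $4$-bad configuration, and your matching-condition bookkeeping is correct.
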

\begin{proof}
    Without loss of generality, we may assume
    \begin{align*}
        \{x_1, b_3, z_3, e_1, y_1\} &= N_H(\alpha),\\
        \{x_3, d_3, z_3, f_1, y_3\} &= N_H(\beta).
    \end{align*}
    Furthermore, we may assume that $i = 2$ and $j = 1$. Since $\H_\phi(R_1)\setminus\{z_2\}$ is twisted, and $\H_\phi(R_2)\setminus\{z_1\}$ is twisted, we may assume without loss of generality that $x_2b_1d_2z_1b_2d_1x_2$ and $y_2e_3f_2z_2e_2f_3y_2$ are cycles in $H$ (see Figure \ref{fig: 1-2-1 Bad}).
    \begin{figure}
        \centering
        \includegraphics{figures/Figure_4Bad.tex}
        \caption{4-bad cover $\mathcal{H}$ as in Lemma \ref{lemma: 1-2-1 bad} (some edges and vertices not shown).}
        \label{fig: 1-2-1 Bad}
    \end{figure}
Then, due to the matching condition, it follows if $\psi$ is the $\H$-coloring defined by $\{z_1, b_3, d_1, x_2, e_2, f_2, y_2, \alpha, \beta\}$, then $\psi$ is $1$-defective with $\psi(u) =1$ and $\psi(v) = 0$. The coloring $\rho$ can be constructed similarly by swapping the roles of $b_3, d_1$ with $d_3, b_1$.
\end{proof}

\begin{lemma}\label{lemma: 1-1-1 bad}
If $\H$ is 5-bad (i.e., $\ell_\phi(x) = 1$, $\ell_\phi(z) = 1$, $\ell_\phi(y) = 1$, and one of the following holds: $\H_\phi(R_1)$ is twisted, $\H_\phi(R_2)$ is twisted, or both $\H_\phi(R_1)$ and $\H_\phi(R_2)$ are wedged), then $\phi$ can be extended to $1$-defective $\H$-colorings $\psi, \rho$ of $T$ such that $\defect_\psi(u) \leq 1$, $\defect_\psi(v) = 0$, and $\defect_\rho(u) = 0$, $\defect_\rho(v) \leq 1$.
\end{lemma}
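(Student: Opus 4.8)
The plan is to construct the coloring $\psi$ with $\defect_\psi(u) \leq 1$ and $\defect_\psi(v) = 0$; the coloring $\rho$ then follows by the evident symmetry interchanging $u$ with $v$ (and accordingly $u_1, u_2$ with $v_1, v_2$, and the colors removed by $\alpha$ with those removed by $\beta$). As in the preceding lemmas I normalize coordinates so that $\phi(u) = \alpha$, $\phi(v) = \beta$, the vertices $x, y, z$ survive in $\H_\phi$ with the single colors $x_2, y_2, z_2$, and for each of $z, u_1, u_2$ the color deleted by $\alpha$ (namely $z_1$, $b_3$, $e_3$) conflicts only with $\alpha$, while the color deleted by $\beta$ (namely $z_3$, $d_3$, $f_3$) conflicts only with $\beta$. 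The guiding principle is that a defect is charged to $u$ exactly when some neighbor of $u$ is colored with one of these $\alpha$-escape colors, so to produce $\psi$ I must repair whatever obstruction $\H_\phi(R_1)$ and $\H_\phi(R_2)$ present while spending at most one $\alpha$-escape and no $\beta$-escape.

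Suppose first that exactly one of the two blocks is twisted; by symmetry assume it is $\H_\phi(R_1)$. Then I recolor its $u$-side vertex by setting $u_1 \defeq b_3$, exactly as in Lemma \ref{lemma: 1-1-2 and R_1 is twisted}: the twisted $6$-cycle forces $z_2 \sim b_2$, so by the matching condition $b_3 \not\sim z_2$, whence $z$ acquires no conflict from $R_1$, a single defect is charged to $u$, and $x, v_1$ are colored as there ($x_2$ and $d_1$), keeping $R_1$ $1$-defective. As $\H_\phi(R_2)$ is now a $1$-$1$ cover that is not twisted, Lemma \ref{Lemma: 1-1 not twisted or wedged} yields a $1$-defective coloring of $R_2$ agreeing with the forced value $z_2$ and contributing at most one conflict at $z$; gluing along $z$ gives $\defect_\psi(z) \leq 1$, $\defect_\psi(u) = 1$, and $\defect_\psi(v) = 0$. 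Should the non-twisted block itself be wedged this causes no trouble, as it merely supplies the one permitted conflict at $z$.

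Finally, suppose that both blocks are twisted, or that neither is twisted (so that, by the definition of $5$-bad, both are wedged). In either situation each block separately forces a conflict, so routing one escape through $u_1$ and another through $u_2$ would charge two defects to $u$, while routing either through $v_1$ or $v_2$ would destroy $\defect_\psi(v) = 0$; this competition for the single allowed defect on $u$ is the crux of the lemma. I would exploit the fact that $R_1$ and $R_2$ meet only in the forced vertex $z$ and spend the one permitted $\alpha$-escape there, setting $z \defeq z_1$: since $z_1 \sim \alpha$ this charges exactly one defect to $u$, and since $\beta$ already matches $z_3$ the matching condition gives $z_1 \not\sim \beta$, so $v$ stays clean. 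It remains to color $u_1, v_1, u_2, v_2$ from their surviving pairs, avoiding $z_1$, so that each block is $1$-defective. When both blocks are twisted, assigning each of $u_i, v_i$ the color its matching joins to $z_2$ yields the independent sets $\{x_2, b_2, d_2, z_1\}$ and $\{y_2, e_1, f_1, z_1\}$, so both blocks become $0$-defective. When both are wedged, the wedge places at least one of $z_2 \sim b_2$, $z_2 \sim d_2$ (and symmetrically on the $R_2$ side), and this lets me choose colors for $u_1, v_1$ (resp.\ $u_2, v_2$) that avoid $z_1$'s two matches while leaving $x$ (resp.\ $y$) with at most one conflict. In all cases $\defect_\psi(u) = 1$ and $\defect_\psi(v) = 0$. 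The only computation of substance is verifying these independence and $1$-defectiveness claims directly from the maximality of the matchings; everything else runs parallel to Lemmas \ref{lemma: 1-1-2 and R_1 is twisted} and \ref{lemma: 1-1-2 and R_1 and R_2 are wedged}.
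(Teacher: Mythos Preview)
Your treatment of the ``exactly one twisted'' and ``both twisted'' sub-cases is correct and coincides with the paper's argument. The gap is in the remaining sub-case, where neither $\H_\phi(R_1)$ nor $\H_\phi(R_2)$ is twisted but both are wedged: your blanket strategy of spending the single $\alpha$-escape at $z$ (setting $\psi(z)=z_1$) does not always produce a $1$-defective coloring.

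Concretely, take the paper's normalization $z_2\sim b_2$, $z_2\sim d_2$, with the wedge on $R_1$ witnessed by $x_2\sim b_1$ and $b_1\sim d_1$; suppose further that $x_2\sim d_2$, that $b_2\sim d_2$ (forced once $b_1\sim d_1$ and $b_3\sim d_3$), and that in the full cover $z_1\sim b_1$ and $z_1\sim d_1$. All of this is consistent with maximal matchings and with $\H_\phi(R_1)$ being wedged but not twisted. If you set $\psi(z)=z_1$, then to keep $\defect_\psi(z)\leq 1$ you are forced to take $\psi(u_1)=b_2$ and $\psi(v_1)=d_2$; but then $d_2$ conflicts with both $x_2$ and $b_2$, so $\defect_\psi(v_1)=2$. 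Your sentence ``this lets me choose colors for $u_1,v_1$ \ldots\ while leaving $x$ with at most one conflict'' watches $x$ but not $v_1$, and it is $v_1$ that breaks.

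The paper resolves this case quite differently: it keeps $\psi(z)=z_2$, obtains a $1$-defective coloring of $R_2$ from Lemma~\ref{Lemma: 1-1 not twisted or wedged}, and then spends the $\alpha$-escape at $x$ (using $x_1$) or at $u_1$ (using $b_3$), depending on which two of $\{x_2b_1,\,b_1d_1,\,d_1x_2\}$ are present. Only in one residual configuration---after a symmetric analysis on the $R_2$ side also fails---does enough extra structure accumulate (in particular $b_3\sim x_2$ and $x_1\sim b_1$, and the analogous relations in $R_2$) that the $z_1$-move you propose becomes provably safe. So the $z_1$ idea is not wrong, but it only works in that last corner; in general you must route the escape through $x$, $u_1$, $y$, or $u_2$.
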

\begin{proof}
 
Without loss of generality, assume $\{x_1, b_3, z_1, e_1, y_1\} = N_H(\alpha)$ and $\{x_3, d_3, z_3, f_1, y_3\} = N_H(\beta)$. 

Assume $\H_\phi(R_1)$ is twisted. Then without loss of generality, we may assume $x_2b_1d_2z_2b_2d_1x_2$ is a cycle in $H$. If $\H_\phi(R_2)$ is twisted, we may assume $y_2e_3f_2z_2e_2f_3y_2$ is a cycle in $H$ (see Figure \ref{Fig: 1-1-1_Bad}).
\begin{figure}
    \centering
    \begin{subfigure}[t]{.5\textwidth}
    \includegraphics{figures/Figure_5BadCase1.tex}
    \caption{}
    \label{Fig: 1-1-1_Bad}
    \end{subfigure}%
    \begin{subfigure}[t]{.5\textwidth}
        \centering
    \includegraphics{figures/Figure_5BadCase2.tex}
    \caption{}
    \label{fig: 1-1-1_Bad Wedged}
    \end{subfigure}
    \caption{5-bad covers $\H$ as in Lemma \ref{lemma: 1-1-1 bad} (some edges and vertices not shown).}
\end{figure}
Then due to the matching condition, if $\psi$ is the coloring induced by $\{x_2, b_2, d_2, z_1, e_2, f_2, y_2, \alpha, \beta\}$, then $\psi$ is a $1$-defective $\H$ coloring $\phi$ of $T$ with $\defect_\phi(u)= 1$ and $\defect_\phi(v) = 0$. The coloring $\rho$ can be constructed in a similar manner (take $z_3$ instead of $z_1$). So we may assume $\H_{\rho}(R_2)$ is not twisted, and thus by Lemma \ref{Lemma: 1-1 not twisted or wedged}, there is a $1$-defective $\H_\phi(R_2)$ coloring, $\psi_0$ of $R_2$. Note $\psi_0(z) = z_2$. Extend $\psi_0$ by taking $\{b_1, d_1, x_1\}$. Then $\psi \defeq \psi_0 \cup \phi$ is a $1$-defective $\H$ coloring of $T$ with $\defect_\psi(u) = 1$ and $\defect_\psi(v) = 0$. The coloring $\rho$ can be constructed similarly, taking $x_3$ in place of $x_1$. Thus we may assume $\H_\phi(R_1)$ is not twisted. By symmetry, we may assume $\H_\phi(R_2)$ is not twisted.

Thus we may assume $\H_\phi(R_1)$ and $\H_\phi(R_2)$ are not twisted, and that both $\H_\phi(R_1)$ and $\H_\phi(R_2)$ are wedged. Then, without loss of generality, we may assume $z_2 \sim b_2$, $z_2 \sim d_2$, and at least two of $\{x_2b_1, b_1d_1, d_1x_2\}$ are edges in $H$. Similalry, we may assume $z_2 \sim e_2$, $z_2 \sim f_2$, and at least two of $\{y_2e_3, e_3f_3, f_3y_2\}$ are edges in $H$ (see Figure \ref{fig: 1-1-1_Bad Wedged}).

Since $\H_\phi(R_2)$ is not twisted, by Lemma \ref{Lemma: 1-1 not twisted or wedged} there exists a $1$-defective $\H_\phi(R_2)$ coloring $\psi_0$ of $R_2$.
Since at least two of $\{x_2b_1, b_1d_1, d_1x_2\}$ are edges in $H$, there are three cases to consider.

Case 1: $x_2 \sim b_1$ and $x_2 \sim d_1$. Then, due to the matching condition, $x_1 \notsim b_1$ and $x_1 \notsim d_1$. 
Thus if $\psi$ is the coloring induced by $\im(\psi_0) \cup \{x_1, b_1, d_1\} \cup \{\alpha, \beta\}$, then $\psi$ is a $1$-defective $\H$ coloring of $T$ with $\defect_\psi(u) = 1$ and $\defect_\psi(v) = 0$. 

Case 2: $x_2 \sim b_1$ and $b_1 \sim d_1$. If $x_2 \sim d_1$, then we are in case 1, so we may assume $x_2 \notsim d_1$. Note by the matching condition that $x_2 \notsim b_3$, and $d_1 \notsim b_3$. Thus if $\psi$ is the coloring induced by  $\im(\psi_0) \cup \{x_2, d_1, b_3\} \cup \{\alpha, \beta\}$, then $\phi$ is a $1$-defective $\H$ coloring of $T$ with $\defect_\psi(u) = 1$ and $\defect_\psi(v) = 0$. 


Case 3: $x_2 \sim d_1$ and $d_1 \sim b_1$.
Note that by the matching condition, $b_3 \notsim d_1$ and $b_3 \notsim z_2$. If $b_3 \notsim x_2$, then take $\psi$ to be the coloring induced by $\psi_0 \cup \{x_2, b_3, d_1\} \cup \{\alpha, \beta\}$, and observe $\psi$ is as desired. Thus we may assume $b_3 \sim x_2$. If $x_1 \notsim b_1$, then we can take $\psi$ to be $\psi_0 \cup \{x_1, b_1, d_1\} \cup \{\alpha, \beta\}$, and $\psi$ is as desired. So we may assume $x_1 \sim b_1$.

At this point, we shift our focus to $R_2$ and apply the same arguments as above to $R_2$. That is, since $\H_\phi(R_2)$ is not twisted, by Lemma \ref{Lemma: 1-1 not twisted or wedged}, there exists a $1$-defective $\H_\phi(R_2)$ coloring $\psi_0$ of $R_2$.
Since at least two of $\{y_2e_3, e_3f_3, f_3y_2\}$ are edges in $H$, there are three cases to consider. These three cases are similar to those above; we may therefore assume we end in the third case with $y_2\sim f_3$, $e_3 \sim f_3$, $y_1 \sim e_3$, $y_2\sim e_1$. Therefore, $H$ is as in Fig. \ref{fig: 1-1-1 wedged super bad}.
\begin{figure}
    \centering
    \includegraphics[scale = 1]{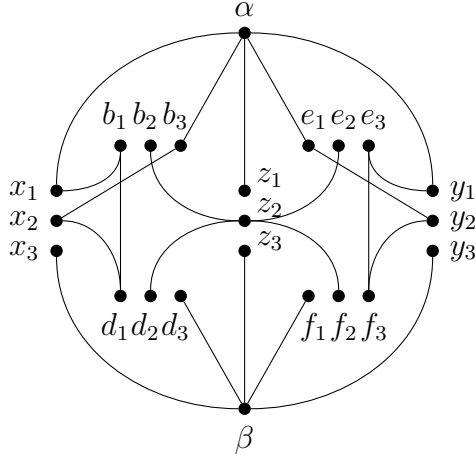}
    \caption{$H$ as in the last case of the proof of Lemma \ref{lemma: 1-1-1 bad} (some edges and vertices not shown).}
    \label{fig: 1-1-1 wedged super bad}
\end{figure}
Taking $\psi$ to be the coloring induced by $\{z_1, b_2, d_2, x_2, e_2, f_2, y_2, \alpha, \beta\}$ results in a $1$-defective $\H$-coloring of $T$ with $\defect_\psi(u) = 1$ and $\defect_\psi(v) = 0$. Constructing $\rho$ is similar.

\end{proof}

\section{Proof of Theorem \ref{Theorem: T(8) not $4$-correspondable}}\label{Section: T(8) not $4$-correspondable}
We construct a 4-fold correspondence cover $\H$ of $T(4)$ such that $T(4)$ is not $\H$-colorable. 

\begin{figure}
    \centering
    \includegraphics{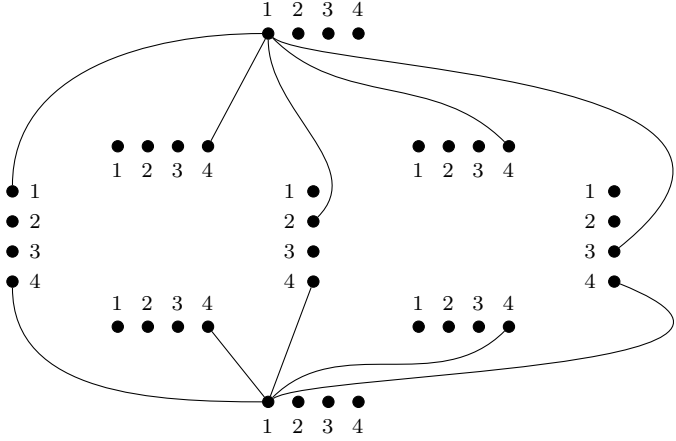}
    \caption{$H_{1, 2, 3;4}(1,1)$ (edges in identity matchings not shown).}
    \label{fig: T not H colorable}
\end{figure}
We start by constructing covers for $T$. Let $L \defeq V(T) \times [4]$.
For variables $\alpha,\beta \in [4]$, let $H_{1,2,3;4}(\alpha,\beta)$ be the cover graph for $T$ defined as follows (see Fig. \ref{fig: T not H colorable}):
\begin{enumerate}
    \item $V(H_{1,2,3;4}(\alpha,\beta)) = V(T) \times [4]$;
    \item $\forall\, c \in \{(x,1), (z, 2), (y, 3), (u_1, 4), (v_1, 4)\}$ we have $(u, \alpha) \sim c$;
    \item $\forall\, w \in N_G(v) \big((v,\beta) \sim (w, 4)\big)$.
    \item All edges in $T\setminus\{u,v\}$ have identity matchings.
\end{enumerate}
We can similarly define $H_{i, j, k; \ell}(\alpha, \beta))$ for any distinct $i, j, k, \ell \in [4]$ and $\alpha,\beta \in [4]$ using the same process as above. Then $(L, H_{i,j,k;\ell}(\alpha, \beta))$ is a correspondence cover for $T$.

\begin{lemma}\label{lemma: T minus alpha and beta is not list colorable}
For any $\alpha, \beta \in [4]$, and any distinct $i, j, k, \ell \in [4]$, there does not exist an $(L, H_{i,j,k;\ell}(\alpha, \beta))$-coloring $\phi$ of $T$ with $\phi(u) = (u,\alpha)$ and $\phi(v) = (v,\beta)$.
\end{lemma}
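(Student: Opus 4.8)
The plan is to fix the two forced colors $\phi(u) = (u,\alpha)$ and $\phi(v) = (v,\beta)$, record exactly which colors they forbid at each remaining vertex, and then show that the leftover coloring problem on $T \setminus \{u,v\}$ has no solution. The key simplification is that, by construction, every edge inside $T \setminus \{u,v\}$ carries the identity matching; hence a proper $(L, H_{i,j,k;\ell}(\alpha,\beta))$-coloring of those vertices is nothing more than an ordinary proper coloring in which adjacent vertices must receive distinct colors from $[4]$. Relabeling the color names, I may assume $(i,j,k,\ell) = (1,2,3,4)$, so it suffices to treat the cover $H_{1,2,3;4}(\alpha,\beta)$.

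First I would compute the residual lists $L_\phi$ at the seven vertices of $T \setminus \{u,v\}$. The color $(u,\alpha)$ conflicts with $(x,1)$, $(z,2)$, $(y,3)$ and with color $4$ at $u_1$ and $u_2$, while $(v,\beta)$ conflicts with color $4$ at every neighbor of $v$, namely $x, v_1, z, v_2, y$. Deleting the forbidden colors leaves $x$ with $\{2,3\}$, $z$ with $\{1,3\}$, $y$ with $\{1,2\}$, and each of $u_1, u_2, v_1, v_2$ with $\{1,2,3\}$ (in the general cover these read $\{j,k\}$, $\{i,k\}$, $\{i,j\}$, and $\{i,j,k\}$). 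The heart of the argument is then a short case analysis on $\phi(z) \in \{1,3\}$, exploiting that $R_1 = T[x,u_1,v_1,z]$ and $R_2 = T[z,u_2,v_2,y]$ are each a copy of $K_4$ minus an edge, so that $\{x,u_1,v_1\}$ and $\{y,u_2,v_2\}$ are triangles with $u_1,v_1$ both adjacent to $z$ and $u_2,v_2$ both adjacent to $z$. If $\phi(z) = 1$, then $\phi(u_1),\phi(v_1) \neq 1$ forces $\{\phi(u_1),\phi(v_1)\} = \{2,3\}$; since $x$ is adjacent to both and $L_\phi(x) = \{2,3\}$, there is no available color for $x$. If instead $\phi(z) = 3$, the symmetric argument inside $R_2$ leaves no color for $y$. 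As these two cases exhaust $L_\phi(z)$, no such $\phi$ can exist.

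I do not anticipate a genuine obstacle here: once the identity-matching reduction is made, the statement is purely combinatorial. The only points requiring care are the bookkeeping that produces the two-element lists $\{2,3\}$, $\{1,3\}$, $\{1,2\}$ at $x,z,y$, and the observation that in each $K_4$-minus-an-edge gadget the apex vertex ($x$ or $y$) is forced to demand a third color lying precisely outside its two-element list. I would write out the $\phi(z)=1$ case in full and dispatch $\phi(z)=3$ by appealing to the $R_1 \leftrightarrow R_2$ symmetry of the construction.
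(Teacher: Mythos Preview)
Your proposal is correct and follows essentially the same route as the paper's proof: reduce by relabeling to $H_{1,2,3;4}(\alpha,\beta)$, observe that the identity matchings on $T\setminus\{u,v\}$ make the residual problem an ordinary list-coloring with $L_\phi(z)=\{1,3\}$, and then in each of the two cases for $\phi(z)$ exhibit a triangle ($\{x,u_1,v_1\}$ or $\{y,u_2,v_2\}$) whose three vertices are forced into the same two-element list. Your write-up is a bit more explicit about the residual-list bookkeeping than the paper's, but the argument is the same.
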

\begin{proof}
   For concreteness, say $(i, j, k, \ell) = (1, 2, 3, 4)$.
    Suppose $T$ were $(L, H_{1,2,3;4}(\alpha, \beta))$-colorable, say by $\phi$, with $\phi(u) = (u, \alpha)$ and $\phi(v) = (v,\beta)$. If $\phi(z) = (z,1)$, then it follows $\phi$ is a proper list coloring of the triangle $u_1, v_1, x$ with $L(w) = \{(w, 2), (w, 3)\}$ for each $w \in \{u_1, v_1, x\}$, a clear contradiction. Similarly, if $\phi(z) = 3$, then $u_2, v_2, y$ cannot be colored. Thus, no such $\phi$ exists. 
\end{proof}

Let $\sigma$ be a permutation of $[4]$. We now define a correspondence cover $\mathcal{H}(\sigma) \defeq (L, H(\sigma))$ for $T$ as follows: let $L(w) = \{w\} \times [4]$ for all $w \in V(T)$ and let 
\[ H(\sigma) \defeq H_{1,2,3;4}(1, \sigma(1)) \cup H_{2,3,4;1}(2, \sigma(2)) \cup H_{3,4,1;2}(3, \sigma(3)) \cup H_{4,1,2;3}(4, \sigma(4)).\]

It is clear that $H(\sigma)$ is indeed a correspondence cover; indeed, if not, then it is an edge with endpoint $u$ or endpoint $v$ that violates the matching condition. But this is not possible.

\begin{lemma}\label{lemma: bad for 4 pairs}
   For all $\sigma \in S_4$, there does not exist an $\H(\sigma)$ coloring $\phi$ of $T$ with $\phi(u) = (u,i)$ and $\phi(v) = (v,\sigma(i))$.
\end{lemma}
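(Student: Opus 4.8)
The plan is to reduce the statement directly to Lemma~\ref{lemma: T minus alpha and beta is not list colorable}. Fix $i \in [4]$ and suppose toward a contradiction that $\phi$ is a proper $\H(\sigma)$-coloring of $T$ with $\phi(u) = (u,i)$ and $\phi(v) = (v,\sigma(i))$. Write $H(\sigma) = H_1 \cup H_2 \cup H_3 \cup H_4$, where $H_\alpha$ denotes the summand of the form $H_{\cdots}(\alpha,\sigma(\alpha))$ appearing in the definition of $H(\sigma)$. My first step would be to pin down exactly which edges of $H(\sigma)$ are incident to the two chosen colors $(u,i)$ and $(v,\sigma(i))$.

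The key observation is a locality statement. By construction, each summand $H_\alpha$ adds edges at $u$ only to the single color $(u,\alpha)$; hence no edge coming from $H_\alpha$ with $\alpha\neq i$ is incident to $(u,i)$, so all conflicts of $\phi(u)=(u,i)$ already live inside $H_i$. Likewise $H_\alpha$ adds edges at $v$ only to the color $(v,\sigma(\alpha))$, and since $\sigma$ is a bijection the color $(v,\sigma(i))$ is touched only when $\alpha=i$; thus all conflicts of $\phi(v)=(v,\sigma(i))$ also live inside $H_i$. Combining this with the fact that every summand uses the identity matching on all edges of $T\setminus\{u,v\}$ (so these internal matchings are unchanged upon taking the union), the constraints that $\phi$ must satisfy are precisely those of the single correspondence cover $(L, H_i)$. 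Consequently $\phi$ is a proper $(L,H_i)$-coloring of $T$ with $\phi(u)=(u,i)$ and $\phi(v)=(v,\sigma(i))$.

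Finally, $H_i$ is one of the covers $H_{p,q,r;s}(i,\sigma(i))$ with $p,q,r,s$ four distinct elements of $[4]$, so Lemma~\ref{lemma: T minus alpha and beta is not list colorable} asserts that no such coloring exists, giving the desired contradiction. The step I expect to be the main obstacle is the locality argument at $v$: one must verify that fixing $\phi(v)=(v,\sigma(i))$ genuinely avoids every edge contributed to $v$ by the other three summands. This is exactly where the hypothesis that $\sigma\in S_4$ is a permutation is essential, since it guarantees that the four $v$-colors $\sigma(1),\dots,\sigma(4)$ are hit by four distinct summands. The only remaining bookkeeping is that the union introduces no edge violating the matching condition, which was already recorded when $\H(\sigma)$ was defined.
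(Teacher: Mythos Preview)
Your proof is correct and follows the same approach as the paper: both argue that a proper $\H(\sigma)$-coloring $\phi$ with $\phi(u)=(u,i)$, $\phi(v)=(v,\sigma(i))$ is automatically a proper coloring for the single summand $H_{i,i+1,i+2;i+3}(i,\sigma(i))$, and then invoke Lemma~\ref{lemma: T minus alpha and beta is not list colorable}. The paper simply asserts this restriction in one line, whereas you spell out the locality argument at $u$ and $v$; note that for the direction actually needed (proper for $H(\sigma)$ implies proper for the summand) the inclusion $H_i\subseteq H(\sigma)$ already suffices, so your locality analysis, while correct and illuminating, is more than strictly necessary.
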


\begin{proof}
    Suppose not, and let $\sigma \in S_4$ and $\phi$ be a $\mathcal{H}(\sigma)$ coloring such that $\phi(u) = (u, i)$ and $\phi(v) = (v, \sigma(i))$ for some $i \in [4]$. Then $\phi$ is an $H_{i, i+1, i+2; i+3}(i, \sigma(i))$ coloring (where addition here is the operation $i + j \defeq (i-1 + j \mod 4) + 1$). By Lemma \ref{lemma: T minus alpha and beta is not list colorable}, such a $\phi$ cannot exist.
\end{proof}

\begin{theorem}
    $T(4)$ is not $4$-correspondable.
\end{theorem}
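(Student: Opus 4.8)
The plan is to build a single $4$-fold correspondence cover $\H^*$ of $T(4)$ by gluing together four copies of the single-$T$ covers $\H(\sigma)$ constructed above, one for each copy of $T$, and to choose the permutations $\sigma$ so that every possible choice of colors at the two shared vertices $x$ and $y$ is ``blocked'' by at least one copy. Recall that in $T(4)$ the four copies of $T$ share the vertices $u$ and $v$ (these are the vertices playing the role of $x,y$ in the amalgamation; I will write $u,v$ to match the notation of $T$ and the lemmas of this section). Each copy $T^{(m)}$, $m\in[4]$, will carry the cover $\H(\sigma_m)$ for a suitable permutation $\sigma_m\in S_4$, agreeing on the common lists $L(u)=\{u\}\times[4]$ and $L(v)=\{v\}\times[4]$; since each $\H(\sigma_m)$ only imposes matching conditions on edges incident to $u$ or $v$ within its own copy, and the interior vertices of distinct copies are disjoint, the union is a legitimate correspondence cover of $T(4)$.

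The key step is the counting/covering argument. By Lemma~\ref{lemma: bad for 4 pairs}, the copy carrying $\H(\sigma_m)$ forbids any $\H^*$-coloring $\phi$ with $\phi(u)=(u,i)$ and $\phi(v)=(v,\sigma_m(i))$ for \emph{any} $i\in[4]$; that is, copy $m$ blocks exactly the four pairs $\{(i,\sigma_m(i)):i\in[4]\}$, which form the graph of the permutation $\sigma_m$ inside the $4\times 4$ grid $[4]\times[4]$. There are $16$ possible pairs $(i,j)$ describing the colors $\phi(u)=(u,i)$, $\phi(v)=(v,j)$, and each permutation blocks exactly $4$ of them; four permutations block at most $16$ pairs, with equality iff the four permutation graphs are pairwise disjoint, i.e.\ iff they partition $[4]\times[4]$. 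So I would choose $\sigma_1=\mathrm{id}$ and $\sigma_2,\sigma_3,\sigma_4$ to be the three nonidentity powers of a $4$-cycle (equivalently any sharply transitive set / Latin square decomposition of $S_4$); concretely $\sigma_m(i)=i+(m-1)\pmod 4$ works, since the four cyclic shifts partition the grid. With this choice, every pair $(i,j)\in[4]\times[4]$ is blocked by exactly one copy.

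To finish, suppose for contradiction that $\phi$ is a proper $\H^*$-coloring of $T(4)$. Then $\phi(u)=(u,i)$ and $\phi(v)=(v,j)$ for some $i,j\in[4]$, and by the covering property there is an $m$ with $\sigma_m(i)=j$. Restricting $\phi$ to the copy $T^{(m)}$ gives a proper $\H(\sigma_m)$-coloring of $T$ with $\phi(u)=(u,i)$ and $\phi(v)=(v,\sigma_m(i))$, contradicting Lemma~\ref{lemma: bad for 4 pairs}. Hence no proper $\H^*$-coloring exists, so $T(4)$ is not $4$-correspondable. Finally I would note planarity: each $T$ is planar with $u,v,x,y$ (here $u,v$) on a common face, so amalgamating the four copies along $u$ and $v$ keeps the graph planar, and $\H^*$ is $4$-fold by construction.

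The main obstacle I anticipate is purely combinatorial bookkeeping rather than conceptual: I must verify that the chosen permutations really do partition $[4]\times[4]$ (that the four copies collectively block all $16$ pairs), and I must confirm that overlaying the four single-copy covers does not accidentally violate the matching condition at $u$ or $v$. The latter is the one place to be careful, since all four copies share the edges incident to $u$ and $v$; but because $H(\sigma_m)$ places at most one neighbor of $(u,i)$ in each interior list $L(w)$ of copy $m$, and the interior lists of distinct copies are disjoint, the only lists where collisions could occur are $L(u)$ and $L(v)$ themselves, which are independent sets by construction. Thus the matching condition is automatically preserved, exactly as argued for a single $\H(\sigma)$, and the union is a valid cover.
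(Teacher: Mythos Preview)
Your proposal is correct and follows essentially the same approach as the paper: you partition $[4]\times[4]$ into the graphs of four permutations (the paper uses exactly the cyclic shifts $\sigma_m(i)=i+(m-1)\pmod 4$ that you suggest, displayed as a table), assign $\H(\sigma_m)$ to the $m$-th copy of $T$, and invoke Lemma~\ref{lemma: bad for 4 pairs} on the copy whose permutation hits the pair $(\phi(u),\phi(v))$. Your extra remarks on verifying the matching condition at the shared vertices and on planarity are fine but not needed for the statement as written.
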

\begin{proof}
    We construct a 4-fold cover $\H = (L, H)$ such that $T(4)$ is not $\H$-colorable. 
    Let $L(w)  = \{w\} \times [4]$ for every $w \in V(T(4))$. 
    
   Let $\{A_1, A_2, A_3, A_4\}$ be a partition of the 16 elements of $[4]\times [4]$ such that each list is a permutation of $[4]$. For instance:
    \[\begin{tabular}{c|c|c|c}
        $A_1$ & $A_2$ & $A_3$ & $A_4$\\
        \hline
        $(1, 1)$ & $(1, 4)$ & $(1, 3)$ & $(1, 2)$\\
        $(2, 2)$ & $(2, 1)$ & $(2, 4)$ & $(2, 3)$\\
        $(3, 3)$ & $(3, 2)$ & $(3, 1)$ & $(3, 4)$\\
        $(4, 4)$ & $(4, 3)$ & $(4, 2)$ & $(4, 1)$    
    \end{tabular}
    \]
    Now define a correspondence cover $(L, H)$ for $T(4)$ by taking the cover for each $T_i$ to be $H(A_i)$, $i \in [4]$. Suppose $\phi$ is a $(L,H)$-coloring of $T(4)$. Suppose $\phi(u) = (u, i)$ and $\phi(v) = (v, j)$ for some $i,j \in [4]$. Let $k \in [4]$ be the index such that $(i, j) \in A_k$. Since $\phi$ is an $(L,H)$ coloring, it follows $\phi$ is a $H(A_k)$-coloring of $T_k$, contradicting Lemma \ref{lemma: bad for 4 pairs}. Thus $T(4)$ is not $(L, H)$-colorable. Since $(L,H)$ is 4-fold, it follows $T(4)$ is not $4$-correspondable.
    \end{proof}

\section{Future directions and open questions}\label{section: open}
We discuss some open questions. The first natural question is to finish characterizing the pairs $(d,k)$ such that all planar graphs are $d$-def $k$-correspondable by answering the following:
\begin{question}
     What is the minimum $d \in \N$ such that all planar graphs are $d$-def $3$-correspondable? (It is known $4 \leq d \leq 6$.)
\end{question}
\begin{question}
    What is the minimum $d \in \N$ such that all planar graphs are $d$-def $4$-correspondable? (It is known $1 \leq d \leq 2$.)
\end{question}

Another natural question is the relationship among defective coloring, defective list coloring, and defective correspondence coloring. For instance, \v{S}rekovski \cite[Problem 3.4]{vskrekovski1999list} asked if every $3$-colorable planar graph is $1$-def $3$-choosable; we ask the stronger question below:
\begin{question}
    Are all $3$-colorable planar graphs $1$-def $3$-correspondable?
\end{question}
Along these lines, we can also ask the following strengthening of Theorem \ref{theorem: 1d3corr but not 4corr}:
\begin{question}
    Does there exist a planar graph that is $1$-def $3$-correspondable, but not 4-\textit{choosable}?
\end{question}

It is also natural to ask how these results extend to various compact surfaces. For instance, Cowen--Cowen--Woodall \cite{cowen1986defective} proved that every graph embeddable in compact surface $S$ with Euler characteristic $\chi(S)$ is $p(\chi(S))$-def $4$-colorable, where $p$ is a linear polynomial. Woodall extended this result to list coloring in \cite{woodall2011defective}. Thus we ask the following:

\begin{question}
   Does there exist a linear polynomial $p$ such that every graph embeddable in a compact surface $S$ is $p(\chi(S))$-def $4$-correspondable?
\end{question}

\section*{Acknowledgments}
Thank you to Anton Bernshteyn for helpful comments improving this manuscript.

\printbibliography

@unpublished{anderson2024coloring,
  title={Coloring locally sparse graphs},
  author={Anderson, James and Dhawan, Abhishek and Kuchukova, Aiya},
  howpublished={arXiv preprint arXiv:2402.19271},
  date={2024}
}

@article{bernshteyn2023weak,
  title={Weak degeneracy of graphs},
  author={Bernshteyn, Anton and Lee, Eugene},
  journal={Journal of Graph Theory},
  volume={103},
  number={4},
  pages={607--634},
  year={2023},
  publisher={Wiley Online Library}
}

@article{cai2021note,
  title={A note on improper DP-colouring of planar graphs},
  author={Cai, Hongyan and Sun, Qiang},
  journal={International Journal of Computer Mathematics: Computer Systems Theory},
  volume={6},
  number={2},
  pages={159--166},
  year={2021},
  publisher={Taylor \& Francis}
}

@article{cho2022decomposing,
  title={Decomposing planar graphs into graphs with degree restrictions},
  author={Cho, Eun-Kyung and Choi, Ilkyoo and Kim, Ringi and Park, Boram and Shan, Tingting and Zhu, Xuding},
  journal={Journal of Graph Theory},
  volume={101},
  number={2},
  pages={165--181},
  year={2022},
  publisher={Wiley Online Library}
}

@article{cowen1986defective,
  title={Defective colorings of graphs in surfaces: partitions into subgraphs of bounded valency},
  author={Cowen, Lenore J and Cowen, Robert H and Woodall, Douglas R},
  journal={Journal of Graph Theory},
  volume={10},
  number={2},
  pages={187--195},
  year={1986},
  publisher={Wiley Online Library}
}

@article{cushing2010planar,
  title={Planar graphs are 1-relaxed, 4-choosable},
  author={Cushing, William and Kierstead, Hal A},
  journal={European Journal of Combinatorics},
  volume={31},
  number={5},
  pages={1385--1397},
  year={2010},
  publisher={Elsevier}
}

@article{dvovrak2018correspondence,
  title={Correspondence coloring and its application to list-coloring planar graphs without cycles of lengths 4 to 8},
  author={Dvo{\v{r}}{\'a}k, Zden{\v{e}}k and Postle, Luke},
  journal={Journal of Combinatorial Theory, Series B},
  volume={129},
  pages={38--54},
  year={2018},
  publisher={Elsevier}
}

@article{eaton1999defective,
  title={Defective list colorings of planar graphs},
  author={Eaton, Nancy and Hull, Thomas},
  journal={Bull. Inst. Combin. Appl},
  volume={25},
  number={79-87},
  pages={40},
  year={1999}
}

@article{erdos1979choosability,
  title={Choosability in graphs},
  author={Erdos, Paul and Rubin, Arthur L and Taylor, Herbert},
  journal={Congr. Numer},
  volume={26},
  number={4},
  pages={125--157},
  year={1979}
}

@article{fang2022relaxed,
  title={Relaxed DP-3-coloring of planar graphs without some cycles},
  author={Fang, Huihui and Wang, Tao},
  journal={Bulletin of the Malaysian Mathematical Sciences Society},
  volume={45},
  number={5},
  pages={2681--2690},
  year={2022},
  publisher={Springer}
}

@article{jing2021defective,
  title={Defective DP-colorings of sparse multigraphs},
  author={Jing, Yifan and Kostochka, Alexandr and Ma, Fuhong and Sittitrai, Pongpat and Xu, Jingwei},
  journal={European Journal of Combinatorics},
  volume={93},
  pages={103267},
  year={2021},
  publisher={Elsevier}
}

@article{jing2022defective,
  title={Defective DP-colorings of sparse simple graphs},
  author={Jing, Yifan and Kostochka, Alexandr and Ma, Fuhong and Xu, Jingwei},
  journal={Discrete Mathematics},
  volume={345},
  number={1},
  pages={112637},
  year={2022},
  publisher={Elsevier}
}

@article{kostochka20212,
  title={On 2-defective DP-colorings of sparse graphs},
  author={Kostochka, Alexandr and Xu, Jingwei},
  journal={European Journal of Combinatorics},
  volume={91},
  pages={103217},
  year={2021},
  publisher={Elsevier}
}

@article{kostochka2024sparse,
  title={Sparse critical graphs for defective DP-colorings},
  author={Kostochka, Alexandr and Xu, Jingwei},
  journal={Discrete Mathematics},
  volume={347},
  number={5},
  pages={113899},
  year={2024},
  publisher={Elsevier}
}

@article{lih2001note,
  title={A note on list improper coloring planar graphs},
  author={Lih, Ko-Wei and Song, Zengmin and Wang, Weifan and Zhang, Kemin},
  journal={Applied Mathematics Letters},
  volume={14},
  number={3},
  pages={269--273},
  year={2001},
  publisher={Elsevier}
}

@article{lovasz1966decomposition,
  title={On decomposition of graphs},
  author={Lov\'as, L\'aszlo},
  journal={Studia Sci. Math. Hungar.},
  volume={1},
  pages={237-238},
  year={1966},
  note={\url{https://real-j.mtak.hu/5451/}}
}

@article{ma2023two,
  title={On two problems of defective choosability of graphs},
  author={Ma, Jie and Xu, Rongxing and Zhu, Xuding},
  journal={Journal of Graph Theory},
  year={2023},
  publisher={Wiley Online Library}
}

@article{robertson1997four,
  title={The four-colour theorem},
  author={Robertson, Neil and Sanders, Daniel and Seymour, Paul and Thomas, Robin},
  journal={journal of combinatorial theory, Series B},
  volume={70},
  number={1},
  pages={2--44},
  year={1997},
  publisher={Elsevier}
}

@article{sittitrai2019sufficient,
  title={Sufficient conditions on planar graphs to have a relaxed DP-3-coloring},
  author={Nakprasit, Kittikorn and Sittitrai, Pongpat},
  journal={Graphs and Combinatorics},
  volume={35},
  pages={837--845},
  year={2019},
  publisher={Springer}
}

@article{sittitrai2023weak,
  title={A weak DP-partitioning of planar graphs without 4-cycles and 6-cycles},
  author={Sittitrai, Pongpat and Nakprasit, Keaitsuda Maneeruk and Nakprasit, Kittikorn},
  journal={Bulletin of the Malaysian Mathematical Sciences Society},
  volume={46},
  number={4},
  pages={141},
  year={2023},
  publisher={Springer}
}

@article{sribunhung2023relaxed,
  title={Relaxed DP-Coloring and another Generalization of DP-Coloring on Planar Graphs without 4-Cycles and 7-Cycles.},
  author={Sribunhung, Sarawute and Nakprasit, Keaitsuda Maneeruk and Nakprasit, Kittikorn and Sittitrai, Pongpat},
  journal={Discussiones Mathematicae: Graph Theory},
  volume={43},
  number={1},
  year={2023}
}

@article{thomassen1994every,
  title={Every planar graph is 5-choosable},
  author={Thomassen, Carsten},
  journal={Journal of Combinatorial Theory Series B},
  volume={62},
  number={1},
  pages={180--181},
  year={1994},
  publisher={Academic Press, Inc. Orlando, FL, USA}
}

@article{vizing1976coloring,
  title={Coloring the vertices of a graph in prescribed colors},
  author={Vizing, Vadim G},
  journal={Diskret. Analiz},
  volume={29},
  number={3},
  pages={10},
  year={1976}
}

@article{voigt1993list,
  title={List colourings of planar graphs},
  author={Voigt, Margit},
  journal={Discrete Mathematics},
  volume={120},
  number={1-3},
  pages={215--219},
  year={1993},
  publisher={Elsevier}
}

@article{vskrekovski1999list,
  title={List improper colourings of planar graphs},
  author={{\v{S}}krekovski, Riste},
  journal={Combinatorics Probability and Computing},
  volume={8},
  number={3},
  pages={293--299},
  year={1999}
}

@article{wang2013improper,
  title={Improper choosability of planar graphs without 4-cycles},
  author={Wang, Yingqian and Xu, Lingji},
  journal={SIAM Journal on Discrete Mathematics},
  volume={27},
  number={4},
  pages={2029--2037},
  year={2013},
  publisher={SIAM}
}

@unpublished{wood2018defective,
  title={Defective and clustered graph colouring},
  author={Wood, David R},
  howpublished={preprint arXiv:1803.07694},
  date={2018},
}

@article{woodall2011defective,
  title={Defective choosability of graphs in surfaces},
  author={Woodall, Douglas},
  journal={Discussiones Mathematicae Graph Theory},
  volume={31},
  number={3},
  pages={441--459},
  year={2011},
  publisher={Uniwersytet Zielonog{\'o}rski. Wydzia{\l} Matematyki, Informatyki i Ekonometrii}
}

@article{xiao2024weak,
  title={A weak DP-coloring of planar graphs without 4-and 9-cycles},
  author={Xiao, Yang and Huang, Mingfang},
  journal={Discrete Applied Mathematics},
  volume={342},
  pages={391--397},
  year={2024},
  publisher={Elsevier}
}

\end{document}